\newtheorem{theorem}{Theorem}[section]
\newtheorem{lem}[theorem]{Lemma}
\newtheorem{rem}[theorem]{Remark}
\newtheorem{cor}[theorem]{Corollary}
\newtheorem{Def}[theorem]{Definition}
\newtheorem{Ex}[theorem]{Example}
\newtheorem{Ass}[theorem]{Assumption}
\DeclareMathOperator*{\esssup}{ess\,sup}
\let\originalleft\left
\let\originalright\right
\renewcommand{\left}{\mathopen{}\mathclose\bgroup\originalleft}
\renewcommand{\right}{\aftergroup\egroup\originalright}
\newcommand{\Addresses}{{
		\footnote{
			\footnotesize
			\noindent \textsuperscript{*}Corresponding author.
			
			\noindent  S. Arora\\ \textit{e-mail:} \texttt{arorasumit10623@gmail.com}
			
			\noindent  A.K. Nandakumaran\\ \textit{e-mail:} \texttt{nands@iisc.ac.in}
			
			\noindent	Department of Mathematics, Indian Institute of Science, Bangalore 560012, India.


			\textit{Key words:} approximate controllability, neutral integro-differential equation, resolvent family, fixed point theorem.
			
			Mathematics Subject Classification (2020): 34K06, 34A12, 37L05, 93B05.

}}}
\begin{document}
	\title[Neutral integro-differential equation]{Controllability problems of a neutral integro-differential equation with memory \Addresses}
	\author [S. Arora and A.K. Nandakumaran]{Sumit Arora and Akambadath Nandakumaran\textsuperscript{*}}
	\maketitle{}
	\begin{abstract} The current study addresses the control problems posed by a semilinear neutral integro-differential equation with memory. The primary objectives of this study are to investigate the existence of a mild solution and approximate controllability of both linear and semilinear control systems in Banach spaces. To accomplish this, we begin by introducing the concept of a resolvent family associated with the homogeneous neutral integro-differential equation without memory. In the process, we establish some important properties of the resolvent family. Subsequently, we develop approximate controllability results for a linear control problem by constructing a linear-quadratic regulator problem. This includes establishing the existence of an optimal pair and determining the expression of the optimal control that produces the approximate controllability of the linear system. Furthermore, we deduce sufficient conditions for the existence of a mild solution and the approximate controllability of a semilinear system in a reflexive Banach space with a uniformly convex dual. Additionally, we delve into the discussion of the approximate controllability for a semilinear problem in general Banach space, assuming a Lipschitz type condition on the nonlinear term. Finally, we implement our findings to examine the approximate controllability of certain partial differential equations, thereby demonstrating their practical relevance.
	\end{abstract}
	\section{Introduction}\label{intro}\setcounter{equation}{0}
	Let $\mathbb{W}$ be a Banach space with its dual $\mathbb{W^{*}}$, and  $\mathbb{U}$ be a Hilbert space (identified with its own dual). We focus on analyzing the existence and approximate controllability of the following abstract neutral integro-differential equations: 
	\begin{equation}\label{SEq}
		\left\{
		\begin{aligned}
			\frac{\mathrm{d}}{\mathrm{d}t}\left[w(t)+\int_{-\infty}^{t}\mathrm{G}(t-s)w(s)\mathrm{d}s\right]&=\mathrm{A}w(t)+\int_{-\infty}^{t}\mathrm{N}(t-s)w(s)\mathrm{d}s+\mathrm{B}u(t)\\&\qquad+f(t,w_t), \ t\in(0,T],\\
			w_{0}&=\psi\in \mathfrak{B}.
		\end{aligned}
		\right.
	\end{equation}
Here $\mathrm{A}:D(\mathrm{A})\subseteq\mathbb{W}\to\mathbb{W}$ and  $\mathrm{N}(t):D(\mathrm{N}(t))\subseteq\mathbb{W}\to\mathbb{W}$ for $t\ge0$, are closed linear operators. The linear operators $\mathrm{G}(t):\mathbb{W}\to\mathbb{W}, t\ge 0$ and $\mathrm{B}:\mathbb{U}\to\mathbb{W}$ are bounded. The conditions on the  nonlinear function $ f:J\times \mathfrak{B}\rightarrow \mathbb{W} $, where $\mathfrak{B}$ denotes the phase space, will be explained in the subsequent section. The function $w_{t}:(-\infty, 0]\rightarrow\mathbb{W}$ with $w_{t}(\theta)=w(t+\theta)$ and $w_t\in\mathfrak{B}$ for each $t\ge 0$ and the control function $u$ is in $\mathrm{L}^2(J;\mathbb{U})$.
	
	
	The classical heat equation (where both internal energy and heat flux exhibit a linear dependence on the temperature $w$ and its gradient $\nabla w$) adequately describes the evolution of temperature across various materials, such as copper, silicon carbide, polystyrene, aluminum, and many others. However, this description does not provide a complete understanding of how heat diffuses in materials with fading memory, mainly because in the classical model it is assumed that changes in the heat source immediately affects the material. The works proposed by Gurtin and Pipkin \cite{ARMA1968} and Nunziato \cite{QAM1971} related to heat conduction phenomena in materials with fading memory, considered the internal energy and heat flux as functionals of $w$ and $\nabla w$. This theory is adequate for describing heat conduction in materials exhibiting fading memory. Subsequently, in the following studies \cite{NDEA2003,SJMA1981,SIAM1990}, neutral integro-differential systems have been frequently employed to describe heat flow phenomena in various materials with fading memory of the form
	\begin{equation*}
		\left\{
		\begin{aligned}
			&\frac{\mathrm{d}}{\mathrm{d}t}\left[w(t,\xi)+\int_{-\infty}^{t}k_1(t-s)w(s,\xi)\mathrm{d}s\right]=c\Delta w(t,\xi)\nonumber\\&\quad+\int_{-\infty}^{t}k_2(t-s)\Delta w(s,\xi)\mathrm{d}s+h(t,\xi, w(t,\xi)),t\ge0, \xi\in\Omega,\\
			&w(0,\xi)=0,\ \xi\in\partial\Omega, 
		\end{aligned}
		\right.
	\end{equation*}

	where $w(t,\xi)$  represents the value at position $\xi$ and time $t$ for $(t,\xi)\in[0,\infty)\times\Omega$ with open and bounded domain $\Omega\in\mathbb{R}^n$ has a boundary $\partial\Omega$ of class $C^2$. The constant $c$ denotes the physical constant and $k_i:\mathbb{R}\to\mathbb{R}, i=1,2$, are the internal energy and the heal flux relaxation, respectively. For further exploration into various models of partial integro-differential equations and their associated applications, we refer to \cite{S1996}. A significant approach to deal with such kinds of systems is to transform them into integro-differential evolution equations in abstract spaces. In fact, if we assume the value of solution $w$ is known on the interval $(-\infty,0]$, we can then transform the above system into the form
	\begin{align*}
		\frac{\mathrm{d}}{\mathrm{d}t}\left[w(t)+F(t,w_t)\right]=\mathrm{A}w(t)+\int_{0}^{t}K(t-s)w(s)\mathrm{d}s+f(t,w_t), t\ge 0.
	\end{align*}
Moreover, the model proposed by Coleman and Gurtin \cite{ZAMP1967} and Miller \cite{JMAA1978} for heat conduction in a rigid, isotropic viscoelastic material under elastic conditions was studied in \cite{JDE1988} using abstract evolution equations. It is observed that the theory of the resolvent operator plays a crucial role in studying semilinear integro-differential evolution equations, see \cite{SIJMA1984, TAMS1982,BVB1993} etc.
	
	On the other hand, controllability, whether exact or approximate, is widely acknowledged in both engineering and mathematical control theory. Controllability captures the ability of a solution to a control problem that start from any initial state to a desired target state through suitable controls. The exact controllability problem has been studied in various instances, see for example, \cite{JDEKDV2020, EECT2015, AMO2022} and the references therein. The approximate controllability problem, particularly in infinite-dimensional systems, has received significant attention due to its broad applications (cf. \cite{SIAM2003,TRR,TR}). Many researchers have achieved notable results on approximate controllability for nonlinear systems in Hilbert and Banach spaces using the resolvent operator condition (e.g., \cite{MCRF2021, EECT2017, ZL2015}).
	
	Recent research on semilinear integro-differential evolution equations, using resolvent operator theory, has yielded significant advancements in existence, stability, regularity, and control problems etc,. see for instance, \cite{NA2010,AAM2017,IJC2018,JFPTA2020} and the references provided therein. In particular, Dos Santos et al. \cite{JIEA2011} established the theory of resolvent operators for the following linear neutral integro-differential equation of the form
	\begin{equation*}
		\left\{
		\begin{aligned}
			\frac{\mathrm{d}}{\mathrm{d}t}\left[w(t)+\int_{0}^{t}\mathrm{G}(t-s)w(s)\mathrm{d}s\right]&=\mathrm{A}w(t)+\int_{0}^{t}\mathrm{N}(t-s)w(s)\mathrm{d}s,\ t\in(0,T],\\
			w(0)&=\zeta\in\mathbb{W}.
		\end{aligned}
		\right.
	\end{equation*}
	They also discussed the existence of mild, strict and classical solutions for the problem of the form given in \eqref{SEq} without considering control. Furthermore, this theory has been increasingly applied in recent years to investigate various neutral partial integro-differential equations, as demonstrated in \cite{JFPTA2023,JIEA2013,EECT2022,JMAA2018} and many other works.
	
	Recently the approximate controllability problem for neutral integro-differential equations in Hilbert spaces include  Mokkedem and Fu \cite{AMC2014}, who used fractional power operator theory, fixed point techniques, and the resolvent operator condition. Nonlocal neutral integro - differential systems with impulses and finite delays were explored in \cite{MMAS2021} using a resolvent operator theory and approximation methods. Furthermore, Cao and Fu \cite{JIEA2022} determined sufficient conditions for the approximate controllability of neutral semilinear integro-differential equations driven by fractional Brownian motion.
	
	To the best of our knowledge, no results have been demonstrated on the approximate controllability of neutral semilinear integro-differential equations in Banach spaces, particularly through the utilization of the resolvent operator condition. This work fills this gap by studying such systems in a reflexive Banach space with uniformly convex dual. Initially, we establish important properties of the resolvent family that are essential for subsequent developments. We then analyze the approximate controllability of a linear problem (see Section \ref{LCS}). Subsequently, we derive sufficient conditions for the approximate controllability of our semilinear system \eqref{SEq}. Furthermore, we extend the results within the framework of general Banach spaces under a Lipschitz type condition on the nonlinear term $f(\cdot,\cdot)$. Unlike existing studies \cite{JIEA2022,MMAS2021,AMC2014}, this work avoids the fractional power theory of linear operators, which is often used in the literature on integro-differential equations.
	\section{Preliminaries}\label{pre}\setcounter{equation}{0}
	Assume that the duality pairing between $\mathbb{W}$ and its dual $\mathbb{W^{*}}$ is represented by $\langle \cdot, \cdot  \rangle $. The notation $\mathcal{L}(\mathbb{U};\mathbb{W})$ and $\mathcal{L}(\mathbb{W})$, respectively stand for the space of all bounded linear operators from $\mathbb{U}$ to $\mathbb{W}$, endowed with the operator norm $\|\cdot\|_{\mathcal{L}(\mathbb{U};\mathbb{W})}$ and  the space of all bounded linear operators on $\mathbb{W}$, equipped with the norm $\|\cdot\|_{\mathcal{L}(\mathbb{W})}$. For a closed linear operator $\mathrm{P}:D(\mathrm{P})\subseteq\mathbb{W}\to\mathbb{Y}$, the notation $D_{1}(P)$ represents the domain of $P$ endowed with the graph norm $\|z\|_{1}=\|z\|_{\mathbb{W}}+\|\mathrm{P}z\|_{\mathbb{Y}}$. The Laplace transform of an appropriate function $ \mathrm{J}:[0,\infty) \to \mathbb{W}$ is denoted by $\widehat{J}$. We now introduce the resolvent  and some of its properties.
	\subsection{Resolvent operator} In this subsection, we introduce the concept of a resolvent operator for the following abstract Cauchy problem involving integro-differential equations:
	\begin{equation}\label{LEq}
		\left\{
		\begin{aligned}
			\frac{\mathrm{d}}{\mathrm{d}t}\left[w(t)+\int_{0}^{t}\mathrm{G}(t-s)w(s)\mathrm{d}s\right]&=\mathrm{A}w(t)+\int_{0}^{t}\mathrm{N}(t-s)w(s)\mathrm{d}s,\ t\in(0,T],\\
			w(0)&=\zeta\in\mathbb{W}.
		\end{aligned}
		\right.
	\end{equation}
	\begin{Def}
		A one-parameter family of bounded linear operators $(\mathscr{R}(t))_{t \ge 0}$ on $\mathbb{W}$ is called a resolvent operator of \eqref{LEq} if the following condition are satisfied:
		\begin{itemize}
			\item [(a)] The function $\mathscr{R}:[0,\infty)\to \mathcal{L}(\mathbb{W})$ is exponentially bounded, strongly continuous and $\mathscr{R}(0)z=z$ for all $z\in\mathbb{W}$. 
			\item [(b)] For $z\in D(\mathrm{A}), \mathscr{R}(\cdot)z\in C([0,\infty);D_1(\mathrm{A}))\cap C^1((0,\infty);\mathbb{W})$, and 
			\begin{align}
				\frac{\mathrm{d}}{\mathrm{d}t}\left[\mathscr{R}(t)z+\int_{0}^{t}\mathrm{G}(t-s)\mathscr{R}(s)z\mathrm{d}s\right]=\mathrm{A}\mathscr{R}(t)z+\int_{0}^{t}\mathrm{N}(t-s)\mathscr{R}(s)z\mathrm{d}s\label{2.3},\\
				\frac{\mathrm{d}}{\mathrm{d}t}\left[\mathscr{R}(t)z+\int_{0}^{t}\mathscr{R}(t-s)\mathrm{G}(s)z\mathrm{d}s\right]=\mathscr{R}(t)\mathrm{A}z+\int_{0}^{t}\mathscr{R}(t-s)\mathrm{N}(s)z\mathrm{d}s\label{2.4},		
			\end{align}
			for every $t\ge 0$.
		\end{itemize}
	\end{Def}
	Throughout this study, we assume that the following conditions are verified.
	\begin{enumerate}
		\item [\textbf{\textit{(Cd1)}}] The linear operator $\mathrm{A}:D(\mathrm{A})\subseteq\mathbb{W}\to\mathbb{W}$ is the infinitesimal generator of an analytic semigroup $(\mathcal{S}(t))_{t\ge 0}$. Let us take two constants $M>0$ and $\nu\in(\pi/2,\pi)$ such that $\rho(A)\supseteq\Lambda_{\nu}=\{\lambda\in\mathrm{\textbf{C}}\setminus\{0\}:|\arg(\lambda)|<\nu\}$ and $\|\mathrm{R}(\lambda,\mathrm{A})\|_{\mathcal{L}(\mathbb{W})}\le M/|\lambda|$ for all $\lambda\in\Lambda_{\nu}$.
		\item [\textbf{\textit{(Cd2)}}] The function $\mathrm{G}:[0,\infty)\to \mathcal{L}(\mathbb{W})$ is strongly continuous and $\widehat{\mathrm{G}}(\lambda)z$ is absolutely convergent for $z\in\mathbb{W}$ and $\mathrm{Re}(\lambda)>0$. Additionally, there exists a constant $\alpha>0$ and an analytical extension of $\widehat{\mathrm{G}}(\lambda)$ (still denoted by $\widehat{\mathrm{G}}(\lambda)$) to $\Lambda_{\nu}$ such that $\|\widehat{\mathrm{G}}(\lambda)\|_{\mathcal{L}(\mathbb{W})}\le N_1|\lambda|^{-\alpha}$ for every $\lambda\in\Lambda_{\nu}$, and $\|\widehat{\mathrm{G}}(\lambda)z\|_{\mathbb{W}}\le N_2|\lambda|^{-1}\|z\|_{1} $ for every $\lambda\in\Lambda_{\nu}$ and $z\in D(\mathrm{A})$.
		\item [\textbf{\textit{(Cd3)}}] The mapping $\mathrm{N}(t):D(\mathrm{N}(t))\subseteq\mathbb{W}\to\mathbb{W}$ is a closed linear operator for each $t\ge0$, the domain $D(\mathrm{A})\subseteq D(\mathrm{N}(t))$, and $\mathrm{N}(\cdot)z$ is strongly measurable on $(0,\infty)$ for each $z\in D(\mathrm{A})$. 
		Moreover, there is a function $\varPi\in\mathrm{L}^1_{loc}(\mathbb{R}^+)$ such that $\widehat{\varPi}(\lambda)$ exists for $\mathrm{Re}(\lambda)>0$ and $\|\mathrm{N}(t)z\|_{\mathbb{W}}\le\varPi(t)\|z\|_{1}$ for all $t>0$ and $z\in D(\mathrm{A})$. In addition, the operator valued function $\widehat{\mathrm{N}}:\Lambda_{\pi/2}\to\mathcal{L}(D_1(\mathrm{A}),\mathbb{W})$ has an analytic extension (still denoted by $\widehat{\mathrm{N}}$) to $\Lambda_{\nu}$ such that $\|\widehat{\mathrm{N}}(\lambda)z\|_{\mathbb{W}}\le\|\widehat{\mathrm{N}}(\lambda)\|_{\mathcal{L}(D_1(\mathrm{A});\mathbb{W})}\|z\|_{1}$ for each $z\in D(\mathrm{A})$, and $\|\widehat{\mathrm{N}}(\lambda)\|_{\mathcal{L}(D_1(\mathrm{A});\mathbb{W})}\to 0$ as $| \lambda| \to\infty$.
		\item [\textbf{\textit{(Cd4)}}] There exists a subspace $Y\subseteq D(\mathrm{A})$, which is dense in the graph norm, and two positive constants $M_i, i=1,2,$ such that $\mathrm{A}(Y)\subseteq D(\mathrm{A}),\ \widehat{\mathrm{N}}(\lambda)(Y)\subseteq D(\mathrm{A}),\ \widehat{\mathrm{G}}(Y)\subseteq D(\mathrm{A}),\ \|\mathrm{A}\widehat{\mathrm{N}}(\lambda)z\|_{\mathbb{W}}\le M_1\|z\|_{1}$ and $\|\widehat{\mathrm{G}}(\lambda)z\|_{1}\le M_2|\lambda|^{-\alpha}\|z\|_{1}$ for all $\lambda\in\Lambda_{\nu}$ and every $z\in Y$.
	\end{enumerate}
	In the sequel, let us define a set  $\Lambda_{r,\theta}=\{\lambda\in\mathrm{\textbf{C}}\setminus\{0\}:|\lambda|>r, |\arg(\lambda)|<\theta\}$ for $r>0$ and $\theta\in(\pi/2,\nu)$. We also define $\Gamma_{r,\theta}=\bigcup_{k=1}^{3} \Gamma^{k}_{r,\theta}$, where $$\Gamma^{1}_{r,\theta}=\{te^{i\theta}:t\ge r\},\ \Gamma^{2}_{r,\theta}=\{re^{i\phi}: -\theta\le\phi\le\theta\},\ \Gamma^{3}_{r,\theta}=\{te^{-i\theta}:t\ge r\}.$$ The orientation is considered as counterclockwise. In addition, let $$\Omega(\mathrm{F})=\{\lambda\in\mathrm{\textbf{C}}:\mathrm{F}(\lambda):=(\lambda I+\lambda\hat{\mathrm{G}}(\lambda)-\mathrm{A}-\hat{\mathrm{N}}(\lambda))^{-1}\in\mathcal{L}(\mathbb{W})\}.$$
	We now review some important properties of the resolvent operator $(\mathscr{R}(t))_{t \ge 0}$ that are needed to establish our results.
	\begin{lem}[Lemma 2.2 \cite{JIEA2011}]\label{lem2.2} There exists a constant $\tilde{r}>0$ such that $\Lambda_{\tilde{r},\theta}\subseteq\Omega(\mathrm{F})$, and the mapping $\mathrm{F}(\lambda):\Lambda_{\tilde{r},\theta}\to\mathcal{L}(\mathbb{W})$ is analytic. Moreover, there exists a constant $M_3>0$ such that $$\|\lambda\mathrm{F}(\lambda)\|_{\mathcal{L}(\mathbb{W})}\le M_3, \ \mbox{for all}\ \lambda\in\Lambda_{\tilde{r},\theta}.$$
	\end{lem}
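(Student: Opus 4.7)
The plan is to factor $\mathrm{F}(\lambda)^{-1} = \lambda I + \lambda\widehat{\mathrm{G}}(\lambda) - \mathrm{A} - \widehat{\mathrm{N}}(\lambda)$ so that the resolvent $(\lambda I - \mathrm{A})^{-1}$ from condition (Cd1) carries most of the work, and then treat the $\widehat{\mathrm{G}}$ and $\widehat{\mathrm{N}}$ terms as small perturbations via a Neumann series. Because (Cd1) only gives $\Lambda_\nu \subseteq \rho(\mathrm{A})$ with $\|(\lambda I - \mathrm{A})^{-1}\|_{\mathcal{L}(\mathbb{W})} \le M/|\lambda|$ and (Cd3) only provides estimates on $\widehat{\mathrm{N}}(\lambda)$ as an operator from $D_1(\mathrm{A})$ to $\mathbb{W}$, I would factor the resolvent on the right:
\begin{equation*}
\lambda I + \lambda\widehat{\mathrm{G}}(\lambda) - \mathrm{A} - \widehat{\mathrm{N}}(\lambda) \;=\; \bigl[I + \bigl(\lambda\widehat{\mathrm{G}}(\lambda) - \widehat{\mathrm{N}}(\lambda)\bigr)(\lambda I - \mathrm{A})^{-1}\bigr]\,(\lambda I - \mathrm{A}).
\end{equation*}
The right-hand expression makes sense on all of $\mathbb{W}$ because $(\lambda I - \mathrm{A})^{-1}$ maps $\mathbb{W}$ into $D(\mathrm{A})$, where $\widehat{\mathrm{N}}(\lambda)$ is defined.

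Next I would estimate the bracketed perturbation in $\mathcal{L}(\mathbb{W})$. From (Cd2), $\|\lambda\widehat{\mathrm{G}}(\lambda)(\lambda I - \mathrm{A})^{-1}\|_{\mathcal{L}(\mathbb{W})} \le N_1|\lambda|^{-\alpha}\cdot M$. For the $\widehat{\mathrm{N}}$ term, I would use the identity $\mathrm{A}(\lambda I - \mathrm{A})^{-1} = \lambda(\lambda I - \mathrm{A})^{-1} - I$ to get $\|(\lambda I - \mathrm{A})^{-1}\|_{\mathcal{L}(\mathbb{W};D_1(\mathrm{A}))} \le M/|\lambda| + M + 1$, which is uniformly bounded on $\Lambda_{\nu}$ for $|\lambda|$ bounded away from $0$. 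Combining this with the assumption $\|\widehat{\mathrm{N}}(\lambda)\|_{\mathcal{L}(D_1(\mathrm{A});\mathbb{W})}\to 0$ as $|\lambda|\to\infty$ from (Cd3) gives $\|\widehat{\mathrm{N}}(\lambda)(\lambda I - \mathrm{A})^{-1}\|_{\mathcal{L}(\mathbb{W})} \to 0$. Choosing $\tilde{r}$ large enough that both contributions together stay below $1/2$ on $\Lambda_{\tilde{r},\theta}$, a Neumann series inverts the bracket with norm $\le 2$, whence
\begin{equation*}
\mathrm{F}(\lambda) = (\lambda I - \mathrm{A})^{-1}\bigl[I + \bigl(\lambda\widehat{\mathrm{G}}(\lambda) - \widehat{\mathrm{N}}(\lambda)\bigr)(\lambda I - \mathrm{A})^{-1}\bigr]^{-1}
\end{equation*}
exists in $\mathcal{L}(\mathbb{W})$, and $\|\lambda\mathrm{F}(\lambda)\|_{\mathcal{L}(\mathbb{W})} \le |\lambda|\cdot(M/|\lambda|)\cdot 2 = 2M =: M_3$.

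Finally, analyticity of $\mathrm{F}(\cdot)$ on $\Lambda_{\tilde{r},\theta}$ follows because $(\lambda I - \mathrm{A})^{-1}$ is analytic on $\rho(\mathrm{A})\supseteq\Lambda_{\nu}$, the analytic extensions of $\widehat{\mathrm{G}}(\lambda)$ and $\widehat{\mathrm{N}}(\lambda)$ to $\Lambda_{\nu}$ are guaranteed by (Cd2)--(Cd3), and inversion of a Neumann-invertible analytic operator-valued function is analytic. The main obstacle, which the right-factoring is designed precisely to overcome, is that $\widehat{\mathrm{N}}(\lambda)$ is not a bounded operator on $\mathbb{W}$; without composing it with $(\lambda I - \mathrm{A})^{-1}$ one cannot even make sense of the perturbation as an element of $\mathcal{L}(\mathbb{W})$, let alone invoke a Neumann series argument. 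Once this composition is taken, the smallness of $\|\widehat{\mathrm{N}}(\lambda)\|_{\mathcal{L}(D_1(\mathrm{A});\mathbb{W})}$ at infinity delivers the result with essentially no further work.
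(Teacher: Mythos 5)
Your argument is correct and is essentially the standard proof of this result: the paper itself does not prove Lemma~\ref{lem2.2} but quotes it from \cite{JIEA2011}, where the same right-factorization $\lambda I+\lambda\widehat{\mathrm{G}}(\lambda)-\mathrm{A}-\widehat{\mathrm{N}}(\lambda)=\bigl[I+(\lambda\widehat{\mathrm{G}}(\lambda)-\widehat{\mathrm{N}}(\lambda))\mathrm{R}(\lambda,\mathrm{A})\bigr](\lambda I-\mathrm{A})$ and Neumann-series inversion for large $|\lambda|$ are used. Your estimates via \textbf{(Cd1)}--\textbf{(Cd3)}, the uniform bound $\|\lambda\mathrm{F}(\lambda)\|_{\mathcal{L}(\mathbb{W})}\le 2M$, and the analyticity argument all check out.
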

	\begin{rem}
		Note that if $(\mathscr{R}(t))_{t\ge 0}$ is a resolvent operator of \eqref{LEq}, it follows from \eqref{2.4} that $\widehat{\mathscr{R}}(\lambda)F(\lambda)z=z$ for all $z\in D(\mathrm{A})$. By employing Lemma \ref{lem2.2} and the properties of Laplace transformation, we deduce that  resolvent operator for \eqref{LEq} is unique if exists.
	\end{rem}
	\begin{theorem}[Theorem 2.1 \cite{JIEA2011}]\label{thm2.3} Let conditions \textbf{(Cd1)}-\textbf{(Cd4)} be true. Then there exists a unique resolvent operator $\mathscr{R}(\cdot)$ defined as 
		\begin{equation}\label{RSL}
			\mathscr{R}(t)=
			\begin{dcases}
				\frac{1}{2\pi i}\int_{\Gamma_{r,\theta}}e^{\lambda t}\mathrm{F}(\lambda)\mathrm{d}\lambda, \ t>0,\\
				\mathrm{I}, \ \qquad\qquad\qquad\qquad\quad t=0,
			\end{dcases}
		\end{equation}
		of the system \eqref{LEq}, where $r>\tilde{r}$ and $\theta\in(\pi/2,\nu)$.
	\end{theorem}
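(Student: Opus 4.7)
The plan is to mimic the Dunford contour integral construction of analytic semigroups, now applied to the operator-valued symbol $\mathrm{F}(\lambda)$ supplied by Lemma \ref{lem2.2}. The first step is to verify that the integral in \eqref{RSL} converges absolutely in $\mathcal{L}(\mathbb{W})$ for every $t > 0$ and defines an exponentially bounded family. Because $\|\mathrm{F}(\lambda)\|_{\mathcal{L}(\mathbb{W})} \le M_3/|\lambda|$ on $\Lambda_{\tilde r,\theta}$ and $\cos\theta < 0$ on the rays $\Gamma^1_{r,\theta},\Gamma^3_{r,\theta}$, the factor $e^{\lambda t}$ decays exponentially there, yielding a bound of the form $\|\mathscr{R}(t)\|_{\mathcal{L}(\mathbb{W})} \le C e^{rt}$. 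Cauchy's theorem together with the analyticity of $\mathrm{F}(\lambda)$ makes the definition independent of the particular $r > \tilde r$ and $\theta \in (\pi/2,\nu)$.

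I would next establish strong continuity on $[0,\infty)$. Continuity on $(0,\infty)$ follows from dominated convergence. The delicate point is $\lim_{t\downarrow 0^+}\mathscr{R}(t)z = z$. For $z \in D(\mathrm{A})$, the identity $\mathrm{F}(\lambda)^{-1} = \lambda I + \lambda\widehat{\mathrm{G}}(\lambda) - \mathrm{A} - \widehat{\mathrm{N}}(\lambda)$ rearranges to
\begin{equation*}
\mathrm{F}(\lambda)z = \frac{z}{\lambda} + \mathrm{R}_z(\lambda), \qquad \|\mathrm{R}_z(\lambda)\|_{\mathbb{W}} = \mathcal{O}\bigl(|\lambda|^{-2}\bigr)\|z\|_1,
\end{equation*}
where the remainder estimate combines Lemma \ref{lem2.2} with $\|\widehat{\mathrm{G}}(\lambda)z\|_{\mathbb{W}} \le N_2|\lambda|^{-1}\|z\|_1$ and $\|\widehat{\mathrm{N}}(\lambda)z\|_{\mathbb{W}} \to 0$ from \textbf{(Cd2)}--\textbf{(Cd3)}. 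The Bromwich identity $\frac{1}{2\pi i}\int_{\Gamma_{r,\theta}} e^{\lambda t}/\lambda \,\mathrm{d}\lambda = 1$ accounts for $z$, while the remainder integral vanishes as $t\downarrow 0^+$ by an initial-value (Abelian-type) argument based on its $|\lambda|^{-2}$ decay. Density of $D(\mathrm{A})$ in $\mathbb{W}$ and the exponential bound extend the convergence to all $z \in \mathbb{W}$.

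The central calculation is the verification of \eqref{2.3}--\eqref{2.4}. The key algebraic observation is that the definition of $\mathrm{F}(\lambda)$ rearranges to
\begin{equation*}
\bigl(\lambda I + \lambda\widehat{\mathrm{G}}(\lambda)\bigr)\mathrm{F}(\lambda) = I + \bigl(\mathrm{A} + \widehat{\mathrm{N}}(\lambda)\bigr)\mathrm{F}(\lambda),
\end{equation*}
with a symmetric identity for $\mathrm{F}(\lambda)$ on the left. For $z \in Y$, the dense subspace from \textbf{(Cd4)}, multiplying by $e^{\lambda t}/(2\pi i)$ and integrating along $\Gamma_{r,\theta}$ converts, via the convolution theorem and the just-established fact $\mathscr{R}(0)z = z$, the left-hand side into $\frac{\mathrm{d}}{\mathrm{d}t}\bigl[\mathscr{R}(t)z + \int_0^t\mathrm{G}(t-s)\mathscr{R}(s)z\,\mathrm{d}s\bigr]$ and the right-hand side into $\mathrm{A}\mathscr{R}(t)z + \int_0^t\mathrm{N}(t-s)\mathscr{R}(s)z\,\mathrm{d}s$, where \textbf{(Cd4)} permits commuting $\mathrm{A}$ past the contour integral. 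A density argument in the graph norm and closedness of $\mathrm{A}$ extend the identity to all $z \in D(\mathrm{A})$ and yield the regularity $\mathscr{R}(\cdot)z \in C([0,\infty);D_1(\mathrm{A})) \cap C^1((0,\infty);\mathbb{W})$; the companion identity \eqref{2.4} is treated analogously from the symmetric algebraic identity.

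The main obstacle I anticipate is the rigorous commutation of the unbounded operators $\mathrm{A}$ and $\widehat{\mathrm{N}}(\lambda)$ with the Bromwich integral together with the Abelian-type limit at $t = 0^+$; this is exactly where the subspace $Y$ of \textbf{(Cd4)} and its quantitative bounds $\|\mathrm{A}\widehat{\mathrm{N}}(\lambda)z\|_{\mathbb{W}} \le M_1\|z\|_1$ and $\|\widehat{\mathrm{G}}(\lambda)z\|_1 \le M_2|\lambda|^{-\alpha}\|z\|_1$ are indispensable. Uniqueness was already recorded in the remark preceding the theorem: any two resolvents share the Laplace transform $\mathrm{F}(\lambda)$ on $\Lambda_{\tilde r,\theta}$, so injectivity of the vector-valued Laplace transform forces them to coincide.
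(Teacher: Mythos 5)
This statement is not proved in the paper at all: it is imported verbatim as Theorem~2.1 of \cite{JIEA2011}, so there is no in-paper argument to compare against. Your sketch reconstructs essentially the route taken in that cited source: convergence and exponential boundedness of the Dunford--Bromwich integral from the bound $\|\lambda\mathrm{F}(\lambda)\|_{\mathcal{L}(\mathbb{W})}\le M_3$ of Lemma~\ref{lem2.2}, the decomposition $\mathrm{F}(\lambda)z=\lambda^{-1}z+\mathcal{O}(|\lambda|^{-2})\|z\|_1$ on $D(\mathrm{A})$ to get $\mathscr{R}(t)z\to z$, verification of \eqref{2.3}--\eqref{2.4} on the dense subspace $Y$ of \textbf{(Cd4)} via the algebraic identity for $\mathrm{F}(\lambda)^{-1}$ and the convolution theorem, and uniqueness by injectivity of the Laplace transform (as in the remark following Lemma~\ref{lem2.2}). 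Your remainder estimate is correct: $\lambda\mathrm{F}(\lambda)z=z-\mathrm{F}(\lambda)\bigl(\lambda\widehat{\mathrm{G}}(\lambda)z-\mathrm{A}z-\widehat{\mathrm{N}}(\lambda)z\bigr)$ with $\|\lambda\widehat{\mathrm{G}}(\lambda)z\|_{\mathbb{W}}\le N_2\|z\|_1$ and $\|\widehat{\mathrm{N}}(\lambda)z\|_{\mathbb{W}}\to0$ gives the stated $|\lambda|^{-2}$ decay. One point you gloss over: with the \emph{fixed} contour $\Gamma_{r,\theta}$ and only the bound $M_3/|\lambda|$, the naive estimate for $\|\mathscr{R}(t)\|_{\mathcal{L}(\mathbb{W})}$ degenerates (logarithmically) as $t\downarrow0^+$, so the uniform bound near $t=0$ that your density argument needs does not come for free; the standard repair is to deform to the $t$-scaled contour $\Gamma_{1/t,\theta}$ (or to exploit the same $\mathcal{O}(|\lambda|^{-2})$ decomposition), exactly as in the analytic-semigroup case and in the cited proof. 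With that repair made explicit, your outline is a faithful, workable reconstruction of the reference's argument rather than a new approach.
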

	\begin{lem}[Lemma 2.3 and Lemma 2.5 \cite{JIEA2011}]\label{lem2.5} The mapping $t\to\mathscr{R}(t),\ t\ge 0$ is strongly continuous in $\mathbb{W}$ and exponentially bounded in $\mathcal{L}(\mathbb{W})$, i.e. $\|\mathscr{R}(t)\|_{\mathcal{L}(\mathbb{W})}\le Ce^{\omega t}$ for some $C>0$ and $\omega>0$.
	\end{lem}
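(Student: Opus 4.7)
My plan is to work directly from the contour integral representation \eqref{RSL} and exploit the bound $\|\lambda F(\lambda)\|_{\mathcal{L}(\mathbb{W})}\le M_3$ on $\Lambda_{\tilde r,\theta}$ given by Lemma \ref{lem2.2}. Since the integrand $e^{\lambda t}F(\lambda)$ is analytic on $\Lambda_{\tilde r,\theta}$, I am free to deform the contour $\Gamma_{r,\theta}$ by varying $r$, and I intend to choose $r$ in a $t$-dependent way to obtain a uniform exponential bound.

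For exponential boundedness, I would split $\Gamma_{r,\theta}=\Gamma^1_{r,\theta}\cup\Gamma^2_{r,\theta}\cup\Gamma^3_{r,\theta}$ and estimate piecewise. On the rays $\Gamma^{1},\Gamma^{3}$, parametrized by $\lambda=se^{\pm i\theta}$ with $s\ge r$, I use $|e^{\lambda t}|=e^{st\cos\theta}$ together with $\cos\theta<0$ (since $\theta\in(\pi/2,\nu)$), giving an integral dominated by $\int_{r}^{\infty}e^{-\sigma st}\frac{M_3}{s}\,ds$ where $\sigma=-\cos\theta>0$. On the arc $\Gamma^{2}$ of radius $r$, I bound $|e^{\lambda t}|\le e^{rt}$ and obtain a contribution of order $e^{rt}$. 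For $t\ge1$ I fix $r>\tilde r$, so the arc gives $Ce^{rt}$ and the rays decay. For $0<t<1$ I deform the contour choosing $r=1/t$ (assuming $t$ is small enough that $1/t>\tilde r$; otherwise use a fixed $r$): then $|e^{\lambda t}|\le e$ on the arc and the substitution $u=st$ on the rays yields $\int_{1}^{\infty}e^{-\sigma u}M_3/u\,du$, which is a $t$-independent constant. Combining the two regimes gives $\|\mathscr{R}(t)\|_{\mathcal{L}(\mathbb{W})}\le Ce^{\omega t}$ for suitable $C,\omega>0$.

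For strong continuity on $(0,\infty)$, the integrand $e^{\lambda t}F(\lambda)$ depends analytically on $t$ and, on any compact subinterval $[a,b]\subset(0,\infty)$, is dominated by the integrable majorant $e^{sa\cos\theta}M_3/|\lambda|$ on the rays (using $t\ge a$ so the exponential decay is preserved), together with the trivial bound on the arc. Dominated convergence then gives norm continuity on $(0,\infty)$, which is stronger than strong continuity. For continuity at $t=0^{+}$, I would first treat $z\in D(\mathrm{A})$. Using the identity $\frac{1}{2\pi i}\int_{\Gamma_{r,\theta}}e^{\lambda t}\lambda^{-1}\,d\lambda=1$ for $t>0$ (standard inverse Laplace evaluation) and $\lambda F(\lambda)-I=F(\lambda)(\mathrm{A}+\widehat{\mathrm{N}}(\lambda)-\lambda\widehat{\mathrm{G}}(\lambda))$, I write
\begin{equation*}
\mathscr{R}(t)z-z=\frac{1}{2\pi i}\int_{\Gamma_{r,\theta}}e^{\lambda t}\frac{1}{\lambda}F(\lambda)\bigl[(\mathrm{A}+\widehat{\mathrm{N}}(\lambda))z-\lambda\widehat{\mathrm{G}}(\lambda)z\bigr]\,d\lambda.
\end{equation*}
Conditions \textbf{(Cd2)} and \textbf{(Cd3)} give $\|\widehat{\mathrm{N}}(\lambda)z\|_{\mathbb{W}}\to0$ and $\|\lambda\widehat{\mathrm{G}}(\lambda)z\|_{\mathbb{W}}\le N_1|\lambda|^{1-\alpha}\|z\|_{\mathbb{W}}$, which combined with $\|\lambda F(\lambda)\|\le M_3$ produces extra decay in $|\lambda|$ making the integrand integrable uniformly for $t\in[0,1]$, and letting $t\to0^{+}$ inside by dominated convergence kills the integral. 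Since $\mathrm{A}$ generates an analytic semigroup (Cd1), $D(\mathrm{A})$ is dense in $\mathbb{W}$, and the uniform bound $\|\mathscr{R}(t)\|\le C$ on $[0,1]$ from the first step allows an $\varepsilon/3$ density argument to extend strong continuity at $0$ to all $z\in\mathbb{W}$.

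The main obstacle is the borderline behavior near $t=0$: without the $t$-dependent contour deformation, the ray integrals pick up a logarithmic divergence, and without exploiting the extra decay of $F(\lambda)-I/\lambda$ on $D(\mathrm{A})$ coming from \textbf{(Cd2)}--\textbf{(Cd3)}, the continuity at $t=0^{+}$ cannot be obtained. Everything else is essentially a routine application of dominated convergence once the right estimates are in place.
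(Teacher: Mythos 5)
The paper never proves this lemma: it is imported verbatim from Dos Santos--Henr\'iquez--Hern\'andez \cite{JIEA2011} (their Lemmas 2.3 and 2.5), so there is no in-paper argument to compare against; your proposal follows the standard analytic-resolvent-family proof used in that reference, namely contour estimates from \eqref{RSL} with the bound $\|\lambda\mathrm{F}(\lambda)\|_{\mathcal{L}(\mathbb{W})}\le M_3$, the $t$-dependent choice $r=1/t$ for small $t$, and strong continuity at $t=0$ first on $D(\mathrm{A})$ via $\lambda\mathrm{F}(\lambda)-\mathrm{I}=\mathrm{F}(\lambda)(\mathrm{A}+\widehat{\mathrm{N}}(\lambda)-\lambda\widehat{\mathrm{G}}(\lambda))$ and then by density, and it is essentially correct. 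One step is stated too quickly: dominated convergence does not by itself ``kill'' the integral; it only identifies
\begin{equation*}
\lim_{t\to0^+}\bigl(\mathscr{R}(t)z-z\bigr)=\frac{1}{2\pi i}\int_{\Gamma_{r,\theta}}\frac{1}{\lambda}\,\mathrm{F}(\lambda)\bigl[(\mathrm{A}+\widehat{\mathrm{N}}(\lambda))z-\lambda\widehat{\mathrm{G}}(\lambda)z\bigr]\,\mathrm{d}\lambda ,
\end{equation*}
and you still must argue this integral is zero: the integrand is analytic in $\Lambda_{\tilde r,\theta}$ and is $O(|\lambda|^{-1-\min\{\alpha,1\}})$ by \textbf{(Cd2)}--\textbf{(Cd3)} and Lemma \ref{lem2.2}, so closing the contour with the arc $\{|\lambda|=R,\ |\arg\lambda|\le\theta\}$ and letting $R\to\infty$ makes the arc contribution $O(R^{-\min\{\alpha,1\}})\to0$, whence Cauchy's theorem gives the vanishing. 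With that one-line addition, and the routine remark that $D(\mathrm{A})$ is dense because of \textbf{(Cd1)} so the $\varepsilon/3$ argument applies with the uniform bound on $[0,1]$, your argument is complete and matches the source's approach.
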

	\begin{lem}[Lemma 2.4 and Lemma 2.6 \cite{JIEA2011}]\label{lemm2.5} The mapping $t\to\mathscr{R}(t),\ t\ge 0$ is strongly continuous in $D_1(\mathrm{A})$ and exponentially bounded in $\mathcal{L}(D_1(\mathrm{A}))$.
	\end{lem}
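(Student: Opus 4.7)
The plan is to upgrade the $\mathcal{L}(\mathbb{W})$-statement of Lemma \ref{lem2.5} to the graph-norm topology on $D_1(\mathrm{A})$. By definition of the graph norm this amounts to proving three things for every $z\in D(\mathrm{A})$: (i) $\mathscr{R}(t)z\in D(\mathrm{A})$ for each $t\ge 0$, (ii) $t\mapsto\mathrm{A}\mathscr{R}(t)z$ is strongly continuous on $[0,\infty)$, and (iii) $\|\mathrm{A}\mathscr{R}(t)z\|_{\mathbb{W}}\le Ce^{\omega t}\|z\|_1$ with constants independent of $z$. Combined with Lemma \ref{lem2.5} these will give the claimed exponential bound and continuity in $\mathcal{L}(D_1(\mathrm{A}))$.

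For (i), I exploit the Dunford-type representation \eqref{RSL}. Since $\mathrm{F}(\lambda)=(\lambda I+\lambda\widehat{\mathrm{G}}(\lambda)-\mathrm{A}-\widehat{\mathrm{N}}(\lambda))^{-1}$ inverts an operator whose natural domain is $D(\mathrm{A})$ (using $D(\mathrm{A})\subseteq D(\widehat{\mathrm{N}}(\lambda))$ from \textbf{(Cd3)} and the boundedness of $\widehat{\mathrm{G}}(\lambda)$), every $\mathrm{F}(\lambda)z$ lies in $D(\mathrm{A})$, and the identity $(\lambda I+\lambda\widehat{\mathrm{G}}(\lambda)-\mathrm{A}-\widehat{\mathrm{N}}(\lambda))\mathrm{F}(\lambda)z=z$ yields
\[
\mathrm{A}\mathrm{F}(\lambda)z=\lambda\mathrm{F}(\lambda)z+\lambda\widehat{\mathrm{G}}(\lambda)\mathrm{F}(\lambda)z-\widehat{\mathrm{N}}(\lambda)\mathrm{F}(\lambda)z-z.
\]
Lemma \ref{lem2.2} bounds $\|\lambda\mathrm{F}(\lambda)\|_{\mathcal{L}(\mathbb{W})}$ by $M_3$, condition \textbf{(Cd2)} controls $\|\lambda\widehat{\mathrm{G}}(\lambda)\mathrm{F}(\lambda)z\|$ by $M_3N_1|\lambda|^{-\alpha}\|z\|$, and \textbf{(Cd3)} gives $\|\widehat{\mathrm{N}}(\lambda)\mathrm{F}(\lambda)z\|\le\|\widehat{\mathrm{N}}(\lambda)\|_{\mathcal{L}(D_1(\mathrm{A}),\mathbb{W})}(\|\mathrm{F}(\lambda)z\|+\|\mathrm{A}\mathrm{F}(\lambda)z\|)$. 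Because \textbf{(Cd3)} forces $\|\widehat{\mathrm{N}}(\lambda)\|_{\mathcal{L}(D_1(\mathrm{A}),\mathbb{W})}\to 0$ as $|\lambda|\to\infty$, enlarging the contour parameter $r$ permits absorbing the $\|\mathrm{A}\mathrm{F}(\lambda)z\|$ contribution, yielding $\|\mathrm{A}\mathrm{F}(\lambda)z\|_{\mathbb{W}}\le C\|z\|$ uniformly on $\Lambda_{r,\theta}$. Closedness of $\mathrm{A}$ and the exponential decay of $e^{\lambda t}$ on $\Gamma^1_{r,\theta}\cup\Gamma^3_{r,\theta}$ (where $\cos\theta<0$) then let me move $\mathrm{A}$ inside the contour integral for $t>0$.

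The raw bound just obtained produces only a singular $C/t$ estimate for $\|\mathrm{A}\mathscr{R}(t)z\|$ near $t=0$, which is too weak for (iii). To remove this singularity I would compare $\mathrm{A}\mathscr{R}(t)z$ with $\mathscr{R}(t)\mathrm{A}z$, which Lemma \ref{lem2.5} already controls by $Ce^{\omega t}\|\mathrm{A}z\|\le Ce^{\omega t}\|z\|_1$. Subtracting the twin equations \eqref{2.3} and \eqref{2.4} on $z\in D(\mathrm{A})$ expresses $\mathrm{A}\mathscr{R}(t)z-\mathscr{R}(t)\mathrm{A}z$ in terms of derivatives and integrals of $\mathrm{G},\mathrm{N}$ convolved with $\mathscr{R}$, each of which is dominated by $Ce^{\omega t}\|z\|_1$ through Lemma \ref{lem2.5} combined with \textbf{(Cd2)}--\textbf{(Cd3)}. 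Dominated convergence on the contour integral then delivers strong continuity for $t>0$, while right-continuity at $t=0$ follows from the comparison identity and the continuity of $\mathscr{R}(\cdot)\mathrm{A}z$ at zero. The main obstacle is the self-referential inequality for $\mathrm{A}\mathrm{F}(\lambda)z$; the absorption step is legitimate only once $|\lambda|$ is large enough for $\|\widehat{\mathrm{N}}(\lambda)\|_{\mathcal{L}(D_1(\mathrm{A}),\mathbb{W})}$ to drop strictly below one, which is precisely why the contour in Theorem \ref{thm2.3} is required to start outside a sufficiently large disk.
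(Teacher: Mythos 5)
The paper does not actually prove this lemma: it is imported verbatim from Lemmas 2.4 and 2.6 of \cite{JIEA2011}, where the argument consists of running the contour estimates for $\mathrm{F}(\lambda)$ in the graph norm and then repeating the $\mathcal{L}(\mathbb{W})$ arguments (the ones behind Lemma \ref{lem2.5}) inside $D_1(\mathrm{A})$. The first half of your sketch is sound and consistent with that route: $\mathrm{F}(\lambda)$ maps $\mathbb{W}$ into $D(\mathrm{A})$, your identity for $\mathrm{A}\mathrm{F}(\lambda)z$ is correct, the absorption via $\|\widehat{\mathrm{N}}(\lambda)\|_{\mathcal{L}(D_1(\mathrm{A});\mathbb{W})}\to 0$ does give $\sup_{\lambda\in\Lambda_{r,\theta}}\|\mathrm{A}\mathrm{F}(\lambda)\|_{\mathcal{L}(\mathbb{W})}<\infty$ for $r$ large, and closedness of $\mathrm{A}$ lets you pass $\mathrm{A}$ under the Dunford integral for $t>0$. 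You also correctly diagnose that this only yields $\|\mathrm{A}\mathscr{R}(t)z\|_{\mathbb{W}}\le C t^{-1}\|z\|_{\mathbb{W}}$, which is useless near $t=0$.

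The gap is the repair step, and it is genuine. First, invoking \eqref{2.3}--\eqref{2.4} is circular in this context: these lemmas are established for the family defined by \eqref{RSL} \emph{before} one knows it is a resolvent operator, and item (b) of the definition already packages $\mathscr{R}(\cdot)z\in C([0,\infty);D_1(\mathrm{A}))$ -- precisely the statement under proof -- so the defining identities are not available. Second, even granting \eqref{2.3}--\eqref{2.4}, subtracting them leaves on the right-hand side the time derivative of $\int_0^t\left[\mathrm{G}(t-s)\mathscr{R}(s)z-\mathscr{R}(t-s)\mathrm{G}(s)z\right]\mathrm{d}s$; condition \textbf{\textit{(Cd2)}} gives only strong continuity of $\mathrm{G}$, and $\mathscr{R}$ is not known to be differentiable in norm, so the claim that each term is ``dominated by $Ce^{\omega t}\|z\|_1$'' is unsupported. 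Worse, the term $\int_0^t\mathrm{N}(t-s)\mathscr{R}(s)z\,\mathrm{d}s$ can only be estimated through $\varPi(t-s)\|\mathscr{R}(s)z\|_{1}$, i.e.\ through the very graph-norm bound you are trying to prove. Third, your argument never uses \textbf{\textit{(Cd4)}}, which is the hypothesis inserted exactly for this purpose: on the dense subspace $Y$, the bounds $\|\mathrm{A}\widehat{\mathrm{N}}(\lambda)z\|_{\mathbb{W}}\le M_1\|z\|_1$ and $\|\widehat{\mathrm{G}}(\lambda)z\|_{1}\le M_2|\lambda|^{-\alpha}\|z\|_1$, combined with the commutation identity $\mathrm{A}\mathrm{F}(\lambda)z=\mathrm{F}(\lambda)\mathrm{A}z+\lambda\widehat{\mathrm{G}}(\lambda)\mathrm{F}(\lambda)z-\lambda\mathrm{F}(\lambda)\widehat{\mathrm{G}}(\lambda)z+\mathrm{F}(\lambda)\widehat{\mathrm{N}}(\lambda)z-\widehat{\mathrm{N}}(\lambda)\mathrm{F}(\lambda)z$ and another absorption, produce a graph-norm analogue of Lemma \ref{lem2.2}, namely an estimate of $\|\mathrm{F}(\lambda)\|_{\mathcal{L}(D_1(\mathrm{A}))}$ that decays in $|\lambda|$ along $\Lambda_{r,\theta}$ (first on $Y$, then by density). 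Only with such a quantitative $\lambda$-decay of $\mathrm{A}\mathrm{F}(\lambda)$ measured against $\|z\|_1$ can the contour argument be rerun in $D_1(\mathrm{A})$ to remove the $t^{-1}$ singularity and to get strong continuity up to $t=0$; your uniform-in-$\lambda$ bound against $\|z\|_{\mathbb{W}}$ plus the proposed comparison with $\mathscr{R}(t)\mathrm{A}z$ cannot deliver this.
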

	\begin{lem}[Lemma 3.11 \cite{JIEA2011}]\label{lem2.6}
		Let $\mathrm{R}(\lambda_0,\mathrm{A})$ is compact for some $\lambda_0\in\rho(\mathrm{A})$. Then the operator $\mathscr{R}(t)$ is compact for all $t>0$.
	\end{lem}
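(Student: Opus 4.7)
The plan is to exhibit $\mathscr{R}(t)$ as a norm limit of compact operators, exploiting the contour representation from Theorem \ref{thm2.3} together with the compactness of $\mathrm{R}(\lambda, \mathrm{A})$. Two ingredients are needed: first, operator-norm compactness of $\mathrm{F}(\lambda)$ along the contour $\Gamma_{r,\theta}$; second, operator-norm convergence of Riemann-sum approximations of the integral \eqref{RSL}. Once both are in hand, the conclusion follows because the compact operators $\mathcal{K}(\mathbb{W})$ form a closed subspace of $\mathcal{L}(\mathbb{W})$.

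For the first ingredient, note that the resolvent identity $\mathrm{R}(\lambda,\mathrm{A}) = \mathrm{R}(\lambda_0,\mathrm{A}) + (\lambda_0 - \lambda)\mathrm{R}(\lambda,\mathrm{A})\mathrm{R}(\lambda_0,\mathrm{A})$ upgrades the hypothesis to compactness of $\mathrm{R}(\lambda,\mathrm{A})$ for every $\lambda\in\rho(\mathrm{A})$. Next I would factor $\mathrm{F}(\lambda)$ through $\mathrm{R}(\lambda,\mathrm{A})$. Writing $\mathrm{F}(\lambda)^{-1} = (\lambda I - \mathrm{A}) + \lambda\widehat{\mathrm{G}}(\lambda) - \widehat{\mathrm{N}}(\lambda)$ and composing with $\mathrm{R}(\lambda,\mathrm{A})$ on the right (here the composition is well-defined because $\mathrm{R}(\lambda,\mathrm{A})\mathbb{W}\subseteq D(\mathrm{A})\subseteq D(\widehat{\mathrm{N}}(\lambda))$ by \textbf{(Cd3)}) yields
\[
\mathrm{F}(\lambda)^{-1}\mathrm{R}(\lambda,\mathrm{A}) = I + \bigl(\lambda\widehat{\mathrm{G}}(\lambda) - \widehat{\mathrm{N}}(\lambda)\bigr)\mathrm{R}(\lambda,\mathrm{A}).
\]
Since the left side is an invertible element of $\mathcal{L}(\mathbb{W})$ for $\lambda\in\Lambda_{\tilde r,\theta}$ by Lemma \ref{lem2.2}, so is the right, and therefore
\[
\mathrm{F}(\lambda) = \mathrm{R}(\lambda,\mathrm{A})\bigl[I + (\lambda\widehat{\mathrm{G}}(\lambda) - \widehat{\mathrm{N}}(\lambda))\mathrm{R}(\lambda,\mathrm{A})\bigr]^{-1}.
\]
As a composition of the compact operator $\mathrm{R}(\lambda,\mathrm{A})$ with a bounded operator, $\mathrm{F}(\lambda)\in\mathcal{K}(\mathbb{W})$ for every $\lambda\in\Gamma_{r,\theta}$.

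For the second ingredient, I would truncate the contour at radius $R$ and write $\mathscr{R}(t) = \mathscr{R}_R(t) + \mathscr{E}_R(t)$, where $\mathscr{R}_R(t)$ uses $\Gamma_{r,\theta}\cap\{|\lambda|\le R\}$. On the outgoing rays $\Gamma^1_{r,\theta}\cup\Gamma^3_{r,\theta}$ with $\theta\in(\pi/2,\nu)$ we have $\cos\theta<0$, and Lemma \ref{lem2.2} gives $\|\mathrm{F}(\lambda)\|_{\mathcal{L}(\mathbb{W})}\le M_3|\lambda|^{-1}$; combined with $|e^{\lambda t}| = e^{t|\lambda|\cos\theta}$ this forces $\|\mathscr{E}_R(t)\|_{\mathcal{L}(\mathbb{W})}\to 0$ as $R\to\infty$, uniformly on compact subsets of $(0,\infty)$. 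For the bounded piece $\mathscr{R}_R(t)$, analyticity of $\lambda\mapsto e^{\lambda t}\mathrm{F}(\lambda)$ on $\Lambda_{\tilde r,\theta}$ implies operator-norm continuity along the compact arc; hence Riemann sums converge to $\mathscr{R}_R(t)$ in $\mathcal{L}(\mathbb{W})$. Each such sum is a finite linear combination of the compact operators $\mathrm{F}(\lambda_j)$, so it lies in $\mathcal{K}(\mathbb{W})$, and by closedness so does $\mathscr{R}_R(t)$, and finally so does $\mathscr{R}(t)$.

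The main obstacle is the factorization step: one must handle domain compatibility of the unbounded pieces $\mathrm{A}$ and $\widehat{\mathrm{N}}(\lambda)$ before legitimately replacing $\mathrm{F}(\lambda)$ by $\mathrm{R}(\lambda,\mathrm{A})$ times a bounded inverse. Composition on the correct side, as above, resolves this, and the invertibility of the bracketed factor is automatic from the already established existence of $\mathrm{F}(\lambda)$ in Lemma \ref{lem2.2}. The norm-convergence step is then essentially bookkeeping with the decay bound $\|\lambda\mathrm{F}(\lambda)\|_{\mathcal{L}(\mathbb{W})}\le M_3$.
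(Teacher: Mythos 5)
Your proof is correct, and it is worth noting that the paper itself does not prove this lemma at all: it is imported verbatim from Lemma 3.11 of \cite{JIEA2011}, so your argument is in effect a self-contained reconstruction of the standard proof behind that citation (compactness of $\mathrm{F}(\lambda)$ on the contour via factorization through $\mathrm{R}(\lambda,\mathrm{A})$, then $\mathscr{R}(t)$ as an operator-norm limit of integrals of compact operators, using $\|\lambda\mathrm{F}(\lambda)\|_{\mathcal{L}(\mathbb{W})}\le M_3$ and $\cos\theta<0$ to kill the tail). Two steps deserve one explicit line each. First, the invertibility of $T:=I+(\lambda\widehat{\mathrm{G}}(\lambda)-\widehat{\mathrm{N}}(\lambda))\mathrm{R}(\lambda,\mathrm{A})$ is indeed automatic, but say why: $T$ is a bounded bijection of $\mathbb{W}$, being the composition of the bijections $\mathrm{R}(\lambda,\mathrm{A}):\mathbb{W}\to D(\mathrm{A})$ and $\mathrm{F}(\lambda)^{-1}=\lambda I+\lambda\widehat{\mathrm{G}}(\lambda)-\mathrm{A}-\widehat{\mathrm{N}}(\lambda):D(\mathrm{A})\to\mathbb{W}$, so $T^{-1}\in\mathcal{L}(\mathbb{W})$ by the bounded inverse theorem; explicitly $T^{-1}=(\lambda I-\mathrm{A})\mathrm{F}(\lambda)=I-(\lambda\widehat{\mathrm{G}}(\lambda)-\widehat{\mathrm{N}}(\lambda))\mathrm{F}(\lambda)$, bounded by the closed graph theorem since $\mathrm{F}(\lambda)$ maps $\mathbb{W}$ into $D(\mathrm{A})$. (This identity also yields the inverse-free factorization $\mathrm{F}(\lambda)=\mathrm{R}(\lambda,\mathrm{A})\bigl[I-(\lambda\widehat{\mathrm{G}}(\lambda)-\widehat{\mathrm{N}}(\lambda))\mathrm{F}(\lambda)\bigr]$, which gives compactness directly.) Second, for norm continuity of $\lambda\mapsto e^{\lambda t}\mathrm{F}(\lambda)$ along the truncated contour, note that the rays $\arg\lambda=\pm\theta$ lie on the boundary of $\Lambda_{\tilde r,\theta}$; the fix is to choose $\theta<\theta'<\nu$ and observe that $\Gamma_{r,\theta}\subset\Lambda_{\tilde r,\theta'}$, where Lemma \ref{lem2.2} still applies, so $\mathrm{F}$ is analytic, hence norm-continuous, in a neighborhood of the compact arc --- the same implicit convention the paper uses in the proof of Lemma \ref{lem2.7}. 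With these two remarks, your truncation estimate and the closedness of the ideal of compact operators in $\mathcal{L}(\mathbb{W})$ complete the argument.
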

	\begin{lem}\label{lem2.7}
		The operator $\mathscr{R}(t)$ is continuous in $\mathcal{L}(\mathbb{W})$ for $t>0$.
	\end{lem}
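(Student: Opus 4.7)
The plan is to exploit the contour integral representation \eqref{RSL} from Theorem \ref{thm2.3} together with the uniform bound $\|\lambda \mathrm{F}(\lambda)\|_{\mathcal{L}(\mathbb{W})} \le M_3$ supplied by Lemma \ref{lem2.2}. The key geometric observation is that, because $\theta \in (\pi/2, \nu)$, on the rays $\Gamma^{1}_{r,\theta}$ and $\Gamma^{3}_{r,\theta}$ one has $\mathrm{Re}(\lambda) = |\lambda|\cos\theta < 0$, so $|e^{\lambda t}|$ decays exponentially in $|\lambda|$ there; on the bounded arc $\Gamma^{2}_{r,\theta}$ the factor $e^{\lambda t}$ is uniformly bounded in a neighborhood of any fixed $t_0 > 0$.

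Fix $t_0 > 0$ and take $t > 0$ with $|t - t_0| < t_0/2$. I would first use \eqref{RSL} to write
$$\mathscr{R}(t) - \mathscr{R}(t_0) = \frac{1}{2\pi i} \int_{\Gamma_{r,\theta}} \bigl(e^{\lambda t} - e^{\lambda t_0}\bigr) \mathrm{F}(\lambda) \, \mathrm{d}\lambda,$$
and then apply the elementary inequality $|e^{\lambda t} - e^{\lambda t_0}| \le |\lambda|\,|t-t_0|\,\sup_{s\in I} e^{s\,\mathrm{Re}(\lambda)}$, where $I$ is the interval with endpoints $t$ and $t_0$. Combining this with $\|\mathrm{F}(\lambda)\|_{\mathcal{L}(\mathbb{W})} \le M_3/|\lambda|$ produces the crucial pointwise estimate
$$\bigl\|(e^{\lambda t} - e^{\lambda t_0})\mathrm{F}(\lambda)\bigr\|_{\mathcal{L}(\mathbb{W})} \le M_3\,|t-t_0|\,\sup_{s\in I} e^{s\,\mathrm{Re}(\lambda)},$$
in which the factor $|\lambda|$ from the mean value step has cancelled the $1/|\lambda|$ from the resolvent bound.

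I would then estimate the contour integral piece by piece. On $\Gamma^{1}_{r,\theta}$ (and symmetrically on $\Gamma^{3}_{r,\theta}$), $\mathrm{Re}(\lambda) < 0$, so the supremum is attained at $\min(t,t_0) \ge t_0/2$, giving a contribution of order
$$M_3 \,|t-t_0| \int_r^\infty e^{-s|\cos\theta|\,t_0/2}\,\mathrm{d}s \;=\; O(|t-t_0|),$$
which tends to $0$ as $t \to t_0$. The arc $\Gamma^{2}_{r,\theta}$, on which $|\lambda|=r$, contributes $O(|t-t_0|)$ by direct estimation since the integrand is uniformly bounded there. Summing the three pieces gives $\|\mathscr{R}(t) - \mathscr{R}(t_0)\|_{\mathcal{L}(\mathbb{W})} \to 0$, and since $t_0 \in (0,\infty)$ was arbitrary, norm continuity on $(0,\infty)$ follows.

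The main obstacle is a subtle one: the bound $\|\mathrm{F}(\lambda)\|_{\mathcal{L}(\mathbb{W})} \le M_3/|\lambda|$ is too weak for the raw integrand $e^{\lambda t}\mathrm{F}(\lambda)$ to be absolutely Bochner integrable in $\mathcal{L}(\mathbb{W})$ along $\Gamma_{r,\theta}$, so one cannot simply pass $t \to t_0$ under the integral via dominated convergence. The mean value step is what rescues the argument, because it supplies the extra factor of $|\lambda|$ needed to cancel the $1/|\lambda|$ from $\mathrm{F}(\lambda)$ and leave an integrand dominated by the integrable exponential $e^{-c|\lambda|}$ along the rays. Everything else is routine estimation using the contour geometry.
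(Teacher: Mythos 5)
Your argument is correct and follows essentially the same route as the paper: both start from the representation \eqref{RSL}, use the bound $\|\mathrm{F}(\lambda)\|_{\mathcal{L}(\mathbb{W})}\le M_3/|\lambda|$ from Lemma \ref{lem2.2}, and control $e^{\lambda t}-e^{\lambda t_0}$ by a mean-value estimate that produces the factor $|\lambda|\,|t-t_0|$. The differences are minor but in your favour: you get the inequality $|e^{\lambda t}-e^{\lambda t_0}|\le|\lambda|\,|t-t_0|\sup_{s\in I}e^{s\,\mathrm{Re}(\lambda)}$ directly from $e^{\lambda t}-e^{\lambda t_0}=\int_{t_0}^{t}\lambda e^{\lambda s}\,\mathrm{d}s$, whereas the paper derives the analogous bound \eqref{e2.5} via an auxiliary real-valued function and the real mean value theorem; and you finish by estimating the three pieces of $\Gamma_{r,\theta}$ explicitly, which yields local Lipschitz continuity $\|\mathscr{R}(t)-\mathscr{R}(t_0)\|_{\mathcal{L}(\mathbb{W})}=O(|t-t_0|)$, while the paper combines pointwise convergence of the integrand with the dominated convergence theorem and obtains only continuity. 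One side remark in your proposal is inaccurate, though harmless: for fixed $t>0$ the raw integrand $e^{\lambda t}\mathrm{F}(\lambda)$ \emph{is} absolutely integrable on $\Gamma_{r,\theta}$, because $\mathrm{Re}(\lambda)=|\lambda|\cos\theta<0$ on the two rays makes $|e^{\lambda t}|$ decay exponentially (this is precisely the paper's computation showing $\int_{\Gamma_{r,\theta}}\frac{M_3}{|\lambda|}e^{\mathrm{Re}(\lambda)t}|\mathrm{d}\lambda|<\infty$), and a dominating function uniform for $t$ near $t_0>0$ is likewise available; so dominated convergence would work even without the mean-value step, which is a convenience (and the source of your quantitative rate) rather than a necessity.
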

	\begin{proof}
		For $0<s<t$, let us compute 
		\begin{align*}
			\left\|\mathscr{R}(t)-\mathscr{R}(s)\right\|_{\mathcal{L}(\mathbb{W})}&=\frac{1}{2\pi}\left\|\int_{\Gamma_{r,\theta}}\left(e^{\lambda t}-e^{\lambda s}\right)\mathrm{F}(\lambda)\mathrm{d}\lambda\right\|_{\mathcal{L}(\mathbb{W})}\nonumber\\&\le \frac{M_3}{2\pi}\int_{\Gamma_{r,\theta}}\left|\frac{e^{\lambda t}-e^{\lambda s}}{\lambda} \right||\mathrm{d}\lambda|.
		\end{align*}
	Now we claim that, there exist $s<c<t$ such that 
			\begin{align}\label{e2.5}
				\left|\frac{e^{\lambda t}-e^{\lambda s}}{\lambda} \right|\le (t-s)|e^{\lambda c}|,\ \mbox{for}\ \lambda\in\Gamma_{r,\theta}.\end{align}
			To prove the claim, let us define a function
			$$ g(r)=(\mathrm{Re}(e^{\lambda t})-\mathrm{Re}(e^{\lambda s}))\mathrm{Re}(e^{\lambda r})+(\mathrm{Im}(e^{\lambda t})-\mathrm{Im}(e^{\lambda s}))\mathrm{Im}(e^{\lambda r}),$$ for  $s\le r\le t$ and $\lambda\in\Gamma_{r,\theta}$.
		The function $g(\cdot)$ is real-valued,  continuous on $[s,t]$ and differentiable on $(s,t)$. Then by the mean value theorem
			\begin{align}\label{e2.6}g(t)-g(s)=g'(c)(t-s), \ \mbox{for some} \ c\in(s,t).\end{align}
			Further, we compute 
			\begin{align}\label{e2.77}
				g(t)-g(s)&= \left(\mathrm{Re}(e^{\lambda t})-\mathrm{Re}(e^{\lambda s})\right)^{2}+\left(\mathrm{Im}(e^{\lambda t})-\mathrm{Im}(e^{\lambda s})\right)^{2}\nonumber\\&=|e^{\lambda t}-e^{\lambda s}|^{2}
			\end{align}
			and 
			\begin{align*}
				g'(c)&=\left(\mathrm{Re}(e^{\lambda t})-\mathrm{Re}(e^{\lambda s})\right)\left(\mathrm{Re}(\lambda)\mathrm{Re}(e^{\lambda c})-\mathrm{Im}(\lambda)\mathrm{Im}(e^{\lambda c})\right)\nonumber\\&\quad+(\mathrm{Im}(e^{\lambda t})-\mathrm{Im}(e^{\lambda s}))\left(\mathrm{Re}(\lambda)\mathrm{Im}(e^{\lambda c})+\mathrm{Im}(\lambda)\mathrm{Re}(e^{\lambda c})\right)
			\end{align*}
			From the above expression, we deduce  that 
			\begin{align}\label{e2.88}
				|g'(c)|\le |\lambda||e^{\lambda t}-e^{\lambda s}| |e^{\lambda c}|
			\end{align}
			Combining \eqref{e2.6}, \eqref{e2.77} and \eqref{e2.88}, we obtain  the claim  \eqref{e2.5}. Then \eqref{e2.5} implies that
			\begin{align}\label{2.5}\left|\frac{e^{\lambda t}-e^{\lambda s}}{\lambda} \right|\to 0, \ \mbox{as}\ t\to s.\end{align}
		We now evaluate
		\begin{align*}
			\int_{\Gamma_{r,\theta}}\frac{M_3}{|\lambda|}e^{\mathrm{Re}(\lambda)t}|\mathrm{d}\lambda|&=2M_3\int_{r}^{\infty}\frac{e^{ts\cos\theta}}{s}\mathrm{d}s+M_3\int_{-\theta}^{\theta}e^{tr\cos\xi}\mathrm{d}\xi\nonumber\\&\le \frac{2M_3}{r|\cos\theta|}+2M_3\theta e^{rt}<\infty, \ \mbox{for each}\ t>0.
		\end{align*}
		Using the aforementioned fact along with the convergence \eqref{2.5} and the Lebesgue dominated convergence theorem, we see that
	$\|\mathscr{R}(t)-\mathscr{R}(s)\|_{\mathcal{L}(\mathbb{W})}\to 0, \ \mbox{as}\ t\to s$
		proving the operator $\mathscr{R}(t)$ is continuous in $\mathcal{L}(\mathbb{W})$ for $t>0$.
	\end{proof}
	Throughout this work, we take $\sup_{t\in[0,T]}\|\mathscr{R}(t)\|_{\mathcal{L}(\mathbb{W})}\le K$.
	\begin{lem}\label{lem2.21}
		If the resolvent operator $\mathscr{R}(t)$ is compact for $t>0$, then the following assertions hold:
		\begin{itemize}
			\item [(a)]$\lim\limits_{h\to 0^+}\|\mathscr{R}(t+h)-\mathscr{R}(h)\mathscr{R}(t)\|_{\mathcal{L}(\mathbb{W})}=0$ for all $t>0$.
			\item [(b)]$\lim\limits_{h\to 0^+}\|\mathscr{R}(t)-\mathscr{R}(h)\mathscr{R}(t-h)\|_{\mathcal{L}(\mathbb{W})}=0$ for all $t>0$.
		\end{itemize} 
	\end{lem}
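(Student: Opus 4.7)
The plan is to reduce each target to a sum of two pieces: one piece handled by the operator-norm continuity of $\mathscr{R}(\cdot)$ on $(0,\infty)$ established in Lemma~\ref{lem2.7}, and another piece of the form $[I-\mathscr{R}(h)]\mathscr{R}(s)$ that I shall control by pairing the strong continuity $\mathscr{R}(h)z\to z$ at the origin (part (a) of the definition of resolvent operator) with the compactness hypothesis on $\mathscr{R}(s)$. The key auxiliary fact is standard: if $\{T_h\}$ is a uniformly bounded family of bounded operators on $\mathbb{W}$ converging strongly to $0$ as $h\to 0^+$, and $C\in\mathcal{L}(\mathbb{W})$ is compact, then $\|T_hC\|_{\mathcal{L}(\mathbb{W})}\to 0$; this is proved by an $\varepsilon$-net argument applied to the relatively compact set $C(\overline{B_\mathbb{W}(0,1)})$.

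For part (a), I would decompose
\[
\mathscr{R}(t+h)-\mathscr{R}(h)\mathscr{R}(t) \;=\; [\mathscr{R}(t+h)-\mathscr{R}(t)] \;+\; [I-\mathscr{R}(h)]\mathscr{R}(t).
\]
With $t>0$ fixed, the first bracket goes to $0$ in $\mathcal{L}(\mathbb{W})$ as $h\to 0^+$ by Lemma~\ref{lem2.7}. For the second bracket, $\{\mathscr{R}(h)-I\}_{h\in[0,1]}$ is uniformly bounded in norm by Lemma~\ref{lem2.5} and converges strongly to $0$, while $\mathscr{R}(t)$ is compact by hypothesis; the auxiliary fact above then delivers $\|[I-\mathscr{R}(h)]\mathscr{R}(t)\|_{\mathcal{L}(\mathbb{W})}\to 0$.

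For part (b), I would perform the analogous decomposition
\[
\mathscr{R}(t)-\mathscr{R}(h)\mathscr{R}(t-h) \;=\; [\mathscr{R}(t)-\mathscr{R}(t-h)] \;+\; [I-\mathscr{R}(h)]\mathscr{R}(t-h),
\]
where now the second summand depends on $h$ through both factors. The first bracket tends to $0$ in norm by Lemma~\ref{lem2.7}. For the second, I would further split
\[
[I-\mathscr{R}(h)]\mathscr{R}(t-h) \;=\; [I-\mathscr{R}(h)]\mathscr{R}(t) \;+\; [I-\mathscr{R}(h)]\bigl[\mathscr{R}(t-h)-\mathscr{R}(t)\bigr];
\]
the first summand is handled exactly as in part~(a), while the second is bounded by $\|I-\mathscr{R}(h)\|_{\mathcal{L}(\mathbb{W})}\,\|\mathscr{R}(t-h)-\mathscr{R}(t)\|_{\mathcal{L}(\mathbb{W})}$, whose first factor is uniformly bounded near $0$ and whose second factor tends to $0$ again by Lemma~\ref{lem2.7}.

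The only delicate point is the auxiliary lemma promoting strong convergence to uniform convergence on relatively compact sets; this is the step where compactness of $\mathscr{R}(t)$ is actually used, and it is precisely the reason the semigroup-like identity fails without compactness. Once this is in place, both (a) and (b) follow mechanically from the two decompositions above with no further work.
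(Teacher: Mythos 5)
Your proposal is correct and follows essentially the same route as the paper: your auxiliary fact (a uniformly bounded family converging strongly to $0$ composed with a compact operator converges in norm, via an $\varepsilon$-net on the image of the unit ball) is exactly the argument the paper carries out inline for part (a), combined with the operator-norm continuity of Lemma~\ref{lem2.7} and the bound of Lemma~\ref{lem2.5}. For part (b) you split $[I-\mathscr{R}(h)]\mathscr{R}(t-h)$ slightly differently than the paper, which instead writes $\mathscr{R}(t)-\mathscr{R}(h)\mathscr{R}(t-h)$ as $[\mathscr{R}(t)-\mathscr{R}(t+h)]+[\mathscr{R}(t+h)-\mathscr{R}(h)\mathscr{R}(t)]+\mathscr{R}(h)[\mathscr{R}(t)-\mathscr{R}(t-h)]$ and invokes (a) directly, but the ingredients and the logic are the same.
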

	\begin{proof}
		Let us first prove (a). We take  $z\in\mathbb{W}$ with $\|z\|_{\mathbb{W}}\le 1$ and any $\epsilon>0$. Then by making use of the compactness of the operator $\mathscr{R}(t)$ for $t>0$, we conclude that the set $\mathfrak{A}(t):=\{\mathscr{R}(t)z:\|z\|_{\mathbb{W}}\le 1\}$ is compact for $t>0$. Thus, there exists a finite family $\{\mathscr{R}(t)z_1,\ldots,\mathscr{R}(t)z_n\}\subset\mathfrak{A}(t)$ such that
		\begin{align}\label{e2.17}
			\|\mathscr{R}(t)z-\mathscr{R}(t)z_i\|_{\mathbb{W}}\le \frac{\epsilon}{3(K+1)},
		\end{align} 
		for any $z\in\mathbb{W}$ with $\|z\|_{\mathbb{W}}\le 1$. By the strong continuity of $\mathscr{R}(t)$, there exist constants $0<h_i<\min\{t,T\}$ for $i=1,\ldots,n,$ satisfying
		\begin{align}\label{e2.18}
			\|\mathscr{R}(t)z_i-\mathscr{R}(h)\mathscr{R}(t)z_i\|_{\mathbb{W}}\le \frac{\epsilon}{3},
		\end{align}
		for all $0\le h\le h_i$ and $i=1,\ldots,n$.
		Further, using the continuity in the uniform operator topology of $\mathscr{R}(t)$ for $t>0$, there exists a constant $0<\tilde{h}<\min\{h_1,...,h_n\}$ such that
		\begin{align}\label{e2.19}
			\|\mathscr{R}(t+h)z-\mathscr{R}(t)z\|_{\mathbb{W}}\le \frac{\epsilon}{3}, \ \mbox{for all}\ 0<h<\tilde{h}.
		\end{align} 
		Using \eqref{e2.17}, \eqref{e2.18} and \eqref{e2.19}, we estimate
		\begin{align*}
			\|\mathscr{R}(t+h)z-\mathscr{R}(h)\mathscr{R}(t)z\|_{\mathbb{W}}&\le\|\mathscr{R}(t+h)z-\mathscr{R}(t)z\|_{\mathbb{W}}+\|\mathscr{R}(t)z-\mathscr{R}(t)z_i\|_{\mathbb{W}}\\&\quad+\|\mathscr{R}(t)z_i-\mathscr{R}(h)\mathscr{R}(t)z_i\|_{\mathbb{W}}+\|\mathscr{R}(h)\mathscr{R}(t)z_i-\mathscr{R}(h)\mathscr{R}(t)z\|_{\mathbb{W}}\nonumber\\&\le\epsilon,
		\end{align*} 
		from which the claim (a) follows. We now prove the assertion (b). For this, let $t>0$ and $0<h<\tilde{h}$, and we estimate
		\begin{align*}
			\|\mathscr{R}(t)-\mathscr{R}(h)\mathscr{R}(t-h)\|_{\mathcal{L}(\mathbb{W})}&\le\|\mathscr{R}(t)-\mathscr{R}(t+h)\|_{\mathcal{L}(\mathbb{W})}+\|\mathscr{R}(t+h)-\mathscr{R}(h)\mathscr{R}(t)\|_{\mathcal{L}(\mathbb{W})}\\&\quad+\|\mathscr{R}(h)\mathscr{R}(t)-\mathscr{R}(h)\mathscr{R}(t-h)\|_{\mathcal{L}(\mathbb{W})}\\&\le\|\mathscr{R}(t)-\mathscr{R}(t+h)\|_{\mathcal{L}(\mathbb{W})}+\|\mathscr{R}(t+h)-\mathscr{R}(h)\mathscr{R}(t)\|_{\mathcal{L}(\mathbb{W})}\\&\quad+K\|\mathscr{R}(t)-\mathscr{R}(t-h)\|_{\mathcal{L}(\mathbb{W})}\\&\le \epsilon.
		\end{align*} 
		Thus, the claim (b) follows by the estimate (a) and the continuity of $\mathscr{R}(t)$ in the uniform operator topology for $t>0$.
	\end{proof}
	\begin{lem}\label{lem2.12}
		Let us assume that the resolvent operator $\mathscr{R}(t)$ is compact for $t>0$. Then the operator $\mathcal{I}:\mathrm{L}^{2}([a,b];\mathbb{W})\rightarrow C([a,b];\mathbb{W})$ is given by
		\begin{align*}
			(\mathcal{I}g)(t)= \int^{t}_{a}\mathscr{R}(t-s)g(s)\mathrm{d}s, \ t\in [a,b],
		\end{align*}
		is compact.
	\end{lem}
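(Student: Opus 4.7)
The plan is to invoke the Arzel\`a--Ascoli theorem: for any bounded set $B\subset \mathrm{L}^2([a,b];\mathbb{W})$ (say $\|g\|_{\mathrm{L}^2}\le \rho$ for all $g\in B$), I will show that $\mathcal{I}(B)\subset C([a,b];\mathbb{W})$ is (i) uniformly bounded, (ii) equicontinuous, and (iii) pointwise relatively compact, i.e.\ $\{(\mathcal{I}g)(t):g\in B\}$ is relatively compact in $\mathbb{W}$ for every $t\in[a,b]$. Uniform boundedness is immediate from $\|\mathscr{R}(t)\|\le K$ together with Cauchy--Schwarz: $\|(\mathcal{I}g)(t)\|_{\mathbb{W}}\le K(b-a)^{1/2}\rho$.

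For equicontinuity on $(a,b]$, I would fix $a\le t_1<t_2\le b$ and an auxiliary small $\varepsilon>0$, and split
\[
(\mathcal{I}g)(t_2)-(\mathcal{I}g)(t_1)=\int_{a}^{t_1-\varepsilon}[\mathscr{R}(t_2-s)-\mathscr{R}(t_1-s)]g(s)\,\mathrm{d}s+\int_{t_1-\varepsilon}^{t_1}[\mathscr{R}(t_2-s)-\mathscr{R}(t_1-s)]g(s)\,\mathrm{d}s+\int_{t_1}^{t_2}\mathscr{R}(t_2-s)g(s)\,\mathrm{d}s.
\]
The second and third pieces are bounded by $2K\varepsilon^{1/2}\rho$ and $K(t_2-t_1)^{1/2}\rho$ via Cauchy--Schwarz; the first is controlled using the uniform operator continuity of $\mathscr{R}$ on the compact interval $[\varepsilon,b]$ guaranteed by Lemma \ref{lem2.7}, which makes $\sup_{s\in[a,t_1-\varepsilon]}\|\mathscr{R}(t_2-s)-\mathscr{R}(t_1-s)\|_{\mathcal{L}(\mathbb{W})}\to 0$ as $t_2-t_1\to 0$. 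Letting $t_2-t_1\to 0$ and then $\varepsilon\to 0$ gives equicontinuity, independently of $g\in B$. (At $t=a$, the estimate $\|(\mathcal{I}g)(t)\|\le K(t-a)^{1/2}\rho$ suffices.)

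Pointwise relative compactness at $t\in(a,b]$ is where the compactness hypothesis enters, and this is the main obstacle because $g$ is only $\mathrm{L}^2$ and the resolvent lacks a semigroup property. My strategy: fix $\varepsilon\in(0,t-a)$ and decompose $(\mathcal{I}g)(t)=\int_{a}^{t-\varepsilon}\mathscr{R}(t-s)g(s)\,\mathrm{d}s+\int_{t-\varepsilon}^{t}\mathscr{R}(t-s)g(s)\,\mathrm{d}s$; the tail integral has norm $\le K\varepsilon^{1/2}\rho$, uniformly in $g\in B$. For the first integral, uniform operator continuity of $\mathscr{R}$ on $[\varepsilon,t-a]$ lets me, for any $\delta>0$, select a partition $a=s_0<s_1<\cdots<s_n=t-\varepsilon$ such that $\|\mathscr{R}(t-s)-\mathscr{R}(t-s_i)\|_{\mathcal{L}(\mathbb{W})}<\delta$ for $s\in[s_{i-1},s_i]$. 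Then
\[
\Bigl\|\int_{a}^{t-\varepsilon}\mathscr{R}(t-s)g(s)\,\mathrm{d}s-\sum_{i=1}^{n}\mathscr{R}(t-s_i)\int_{s_{i-1}}^{s_i}g(s)\,\mathrm{d}s\Bigr\|_{\mathbb{W}}\le \delta(b-a)^{1/2}\rho,
\]
while the discrete sum lies in $\sum_{i=1}^{n}\mathscr{R}(t-s_i)\bigl(\overline{B_{\mathbb{W}}(0,(s_i-s_{i-1})^{1/2}\rho)}\bigr)$, a finite sum of relatively compact sets (each $\mathscr{R}(t-s_i)$ is compact by hypothesis) and hence itself relatively compact. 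Therefore $\{(\mathcal{I}g)(t):g\in B\}$ is totally bounded, first modulo $K\varepsilon^{1/2}\rho+\delta(b-a)^{1/2}\rho$, and letting $\delta,\varepsilon\to 0$ yields total boundedness and hence relative compactness in $\mathbb{W}$. At $t=a$ the set $\{(\mathcal{I}g)(a)\}=\{0\}$ is trivially compact.

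Combining (i)--(iii), Arzel\`a--Ascoli yields relative compactness of $\mathcal{I}(B)$ in $C([a,b];\mathbb{W})$, establishing that $\mathcal{I}$ is compact. The delicate point is the interplay between the compactness of $\mathscr{R}(t)$ and the uniform operator continuity of $\mathscr{R}$ away from $0$ (Lemma \ref{lem2.7}), which together allow us to approximate the vector-valued Bochner integral by finite sums ranging over relatively compact subsets of $\mathbb{W}$.
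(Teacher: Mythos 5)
Your proof is correct, and the equicontinuity/uniform-boundedness part coincides with the paper's Step 1 (same splitting near the singularity at $s=t_1$, same use of Cauchy--Schwarz and of the operator-norm continuity of $\mathscr{R}$ away from $0$ from Lemma \ref{lem2.7}). Where you genuinely diverge is the pointwise relative compactness: the paper factors out a compact operator, comparing $(\mathcal{I}g)(t)$ with $(\mathcal{I}^{\eta}g)(t)=\mathscr{R}(\eta)\int_a^{t-\eta}\mathscr{R}(t-s-\eta)g(s)\,\mathrm{d}s$ and controlling the error through the quasi-semigroup estimate $\|\mathscr{R}(t-s)-\mathscr{R}(\eta)\mathscr{R}(t-s-\eta)\|_{\mathcal{L}(\mathbb{W})}\to 0$ of Lemma \ref{lem2.21} (whose proof itself uses compactness of $\mathscr{R}(t)$), whereas you approximate the Bochner integral by a Riemann-type sum $\sum_i\mathscr{R}(t-s_i)\int_{s_{i-1}}^{s_i}g(s)\,\mathrm{d}s$, using only the uniform norm continuity of $\mathscr{R}$ on $[\varepsilon,t-a]$ plus the compactness of each individual operator $\mathscr{R}(t-s_i)$, and then conclude by total boundedness of a set lying in a small neighborhood of a finite Minkowski sum of relatively compact sets. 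Both arguments are sound and end with the infinite-dimensional Arzel\`a--Ascoli theorem; your route is more self-contained in that it bypasses Lemma \ref{lem2.21} altogether and makes the mechanism transparent (compactness is only needed operator by operator), while the paper's route mimics the classical semigroup-compactness argument and reuses Lemma \ref{lem2.21}, which it needs anyway in later results (e.g.\ the relative compactness step in Theorem \ref{thm4.1} and the final controllability estimate), so within the paper it costs nothing extra.
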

	\begin{proof} {\bf Step1  \kern-.2em(Equicontinuity):} \kern-.4em Consider the ball
			$\mathcal{B}_r\!=\!\!\left\{g\in \mathrm{L}^{2}([a,b];\mathbb{W})\!:\!\left\|g\right\|_{\mathrm{L}^{2}([a,b];\mathbb{W})}\leq r\right\},$
		for any $r>0$. For $ s_1,s_2\in [a,b]$  ($s_1<s_2$) and $g\in\mathcal{B}_{r}$, we compute
		\begin{align*}
			&\left\|(\mathcal{I}g)(s_2)-(\mathcal{I}g)(s_1)\right\|_{\mathbb{W}}\\&\le\left\|\int_{a}^{s_1}\left[\mathscr{R}(s_2-s)-\mathscr{R}(s_1-s)\right]g(s)\mathrm{d}s\right\|_{\mathbb{W}}+\left\|\int_{s_1}^{s_2}\mathscr{R}(s_2-s)g(s)\mathrm{d}s\right\|_{\mathbb{W}}\nonumber\\&\le\int_{a}^{s_1}\left\|\mathscr{R}(s_2-s)-\mathscr{R}(s_1-s)g(s)\right\|_{\mathbb{W}}\mathrm{d}s+K\int_{s_1}^{s_2}\|g(s)\|_{\mathbb{W}}\mathrm{d}s\nonumber\\&\le\int_{a}^{s_1}\left\|\mathscr{R}(s_2-s)-\mathscr{R}(s_1-s)g(s)\right\|_{\mathbb{W}}\mathrm{d}s+K\sqrt{(s_2-s_1)}\|g\|_{\mathrm{L}^2([a,b];\mathbb{W})}.
		\end{align*}
		If $s_1=a,$ then by the above inequality, we implies that 
		\begin{align*}
			\lim_{s_2\to a^+}\left\|(\mathcal{I}g)(s_2)-(\mathcal{I}g)(s_1)\right\|_{\mathbb{W}}=0,\; \mbox{unifromly for} \ g\in\mathrm{L}^2([a,b];\mathbb{W}).
		\end{align*} 
		For given $\epsilon>0$, let us take  $a+\epsilon<s_1<b$, we compute
		\begin{align}\label{e2.20}
		&	\left\|(\mathcal{I}g)(s_2)-(\mathcal{I}g)(s_1)\right\|_{\mathbb{W}}\nonumber\\&\le\int_{a}^{s_1}\left\|\mathscr{R}(s_2-s)-\mathscr{R}(s_1-s)\right\|_{\mathcal{L}(\mathbb{W})}\|g(s)\|_{\mathbb{W}}\mathrm{d}s+K\sqrt{(s_2-s_1)}\|g\|_{\mathrm{L}^2([a,b];\mathbb{W})}\nonumber\\&\le\int_{a}^{s_1-\epsilon}\left\|\mathscr{R}(s_2-s)-\mathscr{R}(s_1-s)\right\|_{\mathcal{L}(\mathbb{W})}\|g(s)\|_{\mathbb{W}}\mathrm{d}s\nonumber\\&\quad+\int_{s_1-\epsilon}^{s_1}\left\|\mathscr{R}(s_2-s)-\mathscr{R}(s_1-s)\right\|_{\mathcal{L}(\mathbb{W})}\|g(s)\|_{\mathbb{W}}\mathrm{d}s+K\sqrt{(s_2-s_1)}\|g\|_{\mathrm{L}^2([a,b];\mathbb{W})}\nonumber\\&\le\sup_{s\in[a,s_1-\epsilon]}\left\|\mathscr{R}(s_2-s)-\mathscr{R}(s_1-s)\right\|_{\mathcal{L}(\mathbb{W})}\int_{a}^{s_1-\epsilon}\|g(s)\|_{\mathbb{W}}\mathrm{d}s+2K\sqrt{\epsilon}\|g\|_{\mathrm{L}^2([a,b];\mathbb{W})}\nonumber\\&\quad+K\sqrt{(s_2-s_1)}\|g\|_{\mathrm{L}^2([a,b];\mathbb{W})}.
		\end{align}
		Using the continuity of $\mathscr{R}(t)$ for $t>0$ under the uniform operator topology and the arbitrariness of $\epsilon$, the right hand side of the expression \eqref{e2.20} converges to zero as $|s_2-s_1| \to 0$. Consequently, $\mathcal{I}(\mathcal{B}_r)$ is equicontinuous on $\mathrm{L}^{2}([a,b];\mathbb{W})$.
		
		\noindent {\bf Step 2:} Next, we verify that $\mathfrak{A}(t):= \left\{(\mathcal{I}g)(t):g\in \mathcal{B}_r\right\}$ for all $t\in [a,b]$ is relatively compact. At $t=a$, it is straightforward to verify that the set $\mathfrak{A}(t)$ is relatively compact in $\mathbb{W}$. Let $a<t\leq b$ be fixed, and for a given $\eta$ where $0< \eta<t-a$, we define
		\begin{align*}
			(\mathcal{I}^{\eta}g)(t)&=\mathscr{R}(\eta)\int_{a}^{t-\eta}\mathscr{R}(t-s-\eta)g(s)\mathrm{d}s.
		\end{align*}
		In view of the operator $\mathscr{R}(\eta)$ is compact, we see that the set $\mathfrak{A}_{\eta}(t)=\{(\mathcal{I}^{\eta}g)(t):g\in \mathcal{B}_r\}$ is relatively compact in $\mathbb{W}$. Therefore, there exist a finite $ z_{i}$'s, for $i=1,\dots, n$ in $ \mathbb{W} $ such that 
		\begin{align*}
			\mathfrak{A}_{\eta}(t) \subset \bigcup_{i=1}^{n}\mathcal{S}_{z_i}(\varepsilon/2),
		\end{align*}
		for some $\varepsilon>0$, where $\mathcal{S}_{z_i}(\varepsilon/2)$ is an open ball centered at $z_i$ and of radius $\varepsilon/2$. Using Lemma \ref{lem2.21}, we can choose an $\eta>0$ such that 
		\begin{align*}
			&\left\|(\mathcal{I}g)(t)-(\mathcal{I}^{\eta}g)(t)\right\|_{\mathbb{W}}\nonumber\\&\le\int_{a}^{t-\eta}\|\left[\mathscr{R}(t-s)-\mathscr{R}(\eta)\mathscr{R}(t-s-\eta)\right]g(s)\|_{\mathbb{W}}\mathrm{d}s+\int_{t-\eta}^{t}\|\mathscr{R}(t-s)g(s)\|_{\mathbb{W}}\mathrm{d}s\nonumber\\&\le\int_{a}^{t-2\eta}\|\left[\mathscr{R}(t-s)-\mathscr{R}(\eta)\mathscr{R}(t-s-\eta)\right]g(s)\|_{\mathbb{W}}\mathrm{d}s+K\sqrt{\eta}\|g\|_{\mathrm{L}^2([a,b];\mathbb{W})}\nonumber\\&\quad\int_{t-2\eta}^{t-\eta}\|\left[\mathscr{R}(t-s)-\mathscr{R}(\eta)\mathscr{R}(t-s-\eta)\right]g(s)\|_{\mathbb{W}}\mathrm{d}s\nonumber\\&\le\int_{a}^{t-2\eta}\left||\mathscr{R}(t-s)-\mathscr{R}(\eta)\mathscr{R}(t-s-\eta)\right\|_{\mathcal{L}(\mathbb{W})}\|g(s)\|_{\mathbb{W}}\mathrm{d}s+K\sqrt{\eta}\|g\|_{\mathrm{L}^2([a,b];\mathbb{W})}\nonumber\\&\quad+(K+K^2)\sqrt{\eta}\|g\|_{\mathrm{L}^2([a,b];\mathbb{W})}\le\frac{\varepsilon}{2}.
		\end{align*}
		Consequently $$ \mathfrak{A}(t)\subset \bigcup_{i=1}^{n}\mathcal{S}_{z_i}(\varepsilon).$$ Hence, for each $t\in [a,b]$, the set $\mathfrak{A}(t)$ is relatively compact in $ \mathbb{W}$. Applying the infinite-dimensional version of the Arzel\'a-Ascoli  theorem (refer to Theorem 3.7, Chapter 2, \cite{BI1995}), we can infer that the operator $\mathcal{I}$ is compact.
	\end{proof}
	\begin{lem}\label{lem5.3.2}Let us define the operator $\mathcal{G}:\mathrm{L}^1([a,b];\mathbb{W})\to C([a,b];\mathbb{W})$ such that 
		\begin{align}\label{eqn5.3.2}
			(\mathcal{G}g)(t):=\int_{a}^{t}\mathscr{R}(t-s)g(s)\mathrm{d}s,\ t\in [a,b].
		\end{align}
		If $\{g_n\}_{n=1}^{\infty}\subset\mathrm{L}^1([a,b];\mathbb{W})$ is any integrably bounded sequence, then the sequence $v_n:=\mathcal{G}(g_n)$ is relatively compact.\qed
	\end{lem}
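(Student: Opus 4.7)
The plan is to adapt the argument of Lemma \ref{lem2.12} to the $\mathrm{L}^{1}$ setting, replacing the $\mathrm{L}^{2}$ estimates (which gave $\sqrt{s_{2}-s_{1}}$ control) by absolute continuity of the Lebesgue integral of the dominating function. By assumption, $\{g_{n}\}$ is integrably bounded, so there is some $h\in \mathrm{L}^{1}([a,b];\mathbb{R}^{+})$ with $\|g_{n}(s)\|_{\mathbb{W}}\le h(s)$ a.e.\ for all $n$. I will verify the hypotheses of the Arzel\`a-Ascoli theorem for $\{v_{n}\}\subset C([a,b];\mathbb{W})$: equicontinuity on $[a,b]$ and relative compactness of $\{v_{n}(t)\}$ in $\mathbb{W}$ for each $t$.

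For \textbf{equicontinuity}, I would fix $a\le s_{1}<s_{2}\le b$ and split
\[
\|v_{n}(s_{2})-v_{n}(s_{1})\|_{\mathbb{W}}\le \int_{a}^{s_{1}}\|\mathscr{R}(s_{2}-s)-\mathscr{R}(s_{1}-s)\|_{\mathcal{L}(\mathbb{W})}h(s)\,\mathrm{d}s+K\int_{s_{1}}^{s_{2}}h(s)\,\mathrm{d}s.
\]
The second term tends to $0$ as $|s_{2}-s_{1}|\to 0$ uniformly in $n$, by absolute continuity of the indefinite integral of $h$. For the first term, at $s_{1}=a$ it vanishes; for $s_{1}>a$, I would fix a small $\epsilon>0$, split the integral at $s_{1}-\epsilon$, bound the integrand on $[s_{1}-\epsilon,s_{1}]$ by $2K h(s)$ (whose integral is small for $\epsilon$ small by absolute continuity), and on $[a,s_{1}-\epsilon]$ use the continuity of $\mathscr{R}(t)$ in the uniform operator topology for $t>0$ (Lemma \ref{lem2.7}) together with uniform continuity on the compact interval $[\epsilon,T]$. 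Thus the right-hand side tends to $0$ as $|s_{2}-s_{1}|\to 0$, uniformly in $n$.

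For \textbf{pointwise relative compactness}, I would mimic the approximation step from Lemma \ref{lem2.12}. At $t=a$ the set is $\{0\}$. For $a<t\le b$ and $0<\eta<t-a$, define
\[
v_{n}^{\eta}(t):=\mathscr{R}(\eta)\int_{a}^{t-\eta}\mathscr{R}(t-s-\eta)g_{n}(s)\,\mathrm{d}s.
\]
Since $\|g_{n}\|_{\mathrm{L}^{1}}\le \|h\|_{\mathrm{L}^{1}}$, the inner integrals form a bounded set in $\mathbb{W}$, and compactness of $\mathscr{R}(\eta)$ shows that $\{v_{n}^{\eta}(t):n\ge 1\}$ is relatively compact. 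Using Lemma \ref{lem2.21}(b), I would estimate
\[
\|v_{n}(t)-v_{n}^{\eta}(t)\|_{\mathbb{W}}\le \int_{a}^{t-\eta}\|\mathscr{R}(t-s)-\mathscr{R}(\eta)\mathscr{R}(t-s-\eta)\|_{\mathcal{L}(\mathbb{W})}\,h(s)\,\mathrm{d}s+K\int_{t-\eta}^{t}h(s)\,\mathrm{d}s,
\]
and further split the first integral at $t-2\eta$ to exploit that on $[a,t-2\eta]$ the argument $t-s-\eta\ge \eta$ stays away from $0$, so Lemma \ref{lem2.21}(b) gives uniform smallness of the operator norm, while on $[t-2\eta,t-\eta]$ the integrand is dominated by $2Kh(s)$ whose integral is small by absolute continuity. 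Hence $\{v_{n}(t)\}$ lies within $\varepsilon$ of the relatively compact set $\{v_{n}^{\eta}(t)\}$, so it is totally bounded, hence relatively compact.

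The \textbf{main obstacle} is that, unlike the $\mathrm{L}^{2}$ case, we cannot use H\"older's inequality to bound $\int_{I}\|g_{n}\|\,\mathrm{d}s$ by $|I|^{1/2}$ times a fixed constant; the integrable boundedness hypothesis is the right substitute, and absolute continuity of $\int h\,\mathrm{d}s$ replaces the explicit modulus $\sqrt{|I|}$ at every stage. Once both conditions are established, Arzel\`a-Ascoli (Theorem 3.7, Chapter 2, \cite{BI1995}) immediately yields relative compactness of $\{v_{n}\}$ in $C([a,b];\mathbb{W})$.
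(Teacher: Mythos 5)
Your overall strategy is exactly the one the paper intends for Lemma \ref{lem5.3.2}: the paper's "proof" is only a pointer to Lemma \ref{lem2.21} and the technique of Lemma 3.6 of \cite{JDE2022}, which is the same Arzel\`a--Ascoli scheme used for Lemma \ref{lem2.12} (equicontinuity plus pointwise relative compactness via the approximation $v_n^{\eta}(t)=\mathscr{R}(\eta)\int_a^{t-\eta}\mathscr{R}(t-s-\eta)g_n(s)\,\mathrm{d}s$). Your adaptation -- replacing the Cauchy--Schwarz/$\sqrt{|I|}$ bounds of the $\mathrm{L}^2$ case by a dominating function $h\in\mathrm{L}^1$ and absolute continuity of its indefinite integral -- is the right substitute and gives bounds uniform in $n$, so the equicontinuity step and the total-boundedness conclusion are correct.

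One justification, however, does not hold as stated. On $[a,t-2\eta]$ you assert that Lemma \ref{lem2.21}(b) yields smallness of $\left\|\mathscr{R}(t-s)-\mathscr{R}(\eta)\mathscr{R}(t-s-\eta)\right\|_{\mathcal{L}(\mathbb{W})}$ uniformly in $s$ because $t-s-\eta\ge\eta>0$. But Lemma \ref{lem2.21}(b) is a pointwise statement: for each \emph{fixed} $\tau>0$, $\|\mathscr{R}(\tau)-\mathscr{R}(\eta)\mathscr{R}(\tau-\eta)\|_{\mathcal{L}(\mathbb{W})}\to0$ as $\eta\to0^+$. It gives no uniformity over $\tau=t-s\in[2\eta,t-a]$, and near the lower endpoint $\tau\approx2\eta$ both time arguments shrink with $\eta$, so "$\ge\eta$" is not "bounded away from zero" and the lemma says nothing there. (The paper's own proof of Lemma \ref{lem2.12} makes the same leap, so you have mirrored it, but it should be closed.) The fix is standard and avoids uniformity altogether: for fixed $t$, the functions $s\mapsto\left\|\mathscr{R}(t-s)-\mathscr{R}(\eta)\mathscr{R}(t-s-\eta)\right\|_{\mathcal{L}(\mathbb{W})}h(s)$ tend to $0$ pointwise on $[a,t)$ as $\eta\to0^+$ by Lemma \ref{lem2.21}(b) and are dominated by $(K+K^2)h\in\mathrm{L}^1$, so the integral over $[a,t-\eta]$ tends to $0$ by dominated convergence; adding $K\int_{t-\eta}^{t}h(s)\,\mathrm{d}s\to0$ gives $\sup_{n}\|v_n(t)-v_n^{\eta}(t)\|_{\mathbb{W}}\to0$, which is all that is needed before invoking compactness of $\mathscr{R}(\eta)$ and total boundedness. (Also, the crude bound on the short interval should be $(K+K^2)h(s)$ rather than $2Kh(s)$ -- harmless.) With this repair your proof is complete and coincides in substance with the argument the paper has in mind.
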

	A proof of the lemma above can be readily obtained by using Lemma \ref{lem2.21} along with the technique employed in the proof of Lemma 3.6 of \cite{JDE2022}.
	\begin{cor}\label{cor1}
		Let $\{g_n\}_{n=1}^{\infty}\subset\mathrm{L}^1([a,b];\mathbb{W})$ be an integrably bounded sequence such that $g_n\xrightharpoonup{w} g \ \text{ in }\ \mathrm{L}^1([a,b];\mathbb{W}) \ \mbox{as}\ n\to\infty.$ Then
		$\mathcal{G}(g_n)\to\mathcal{G}(g)\ \text{ in }\ C([a,b];\mathbb{W}) \ \mbox{as}\ n\to\infty.$ \qed
	\end{cor}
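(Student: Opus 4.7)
The plan is to combine the relative compactness of $\{\mathcal{G}(g_n)\}$ in $C([a,b];\mathbb{W})$ provided by Lemma \ref{lem5.3.2} with a pointwise weak-to-weak continuity argument, and then invoke the standard subsequence principle to upgrade pointwise weak convergence to uniform strong convergence. The whole proof is soft: no refined property of $\mathscr{R}(\cdot)$ beyond the uniform bound $K$ on $[0,T]$ is needed here, since the compactness work has already been done in Lemma \ref{lem5.3.2}.

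First, I would observe that for each fixed $t\in[a,b]$, the evaluation map $\mathcal{G}_t:\mathrm{L}^1([a,b];\mathbb{W})\to\mathbb{W}$ defined by $\mathcal{G}_t(h):=\int_a^t\mathscr{R}(t-s)h(s)\mathrm{d}s$ is linear and, in view of $\sup_{t\in[0,T]}\|\mathscr{R}(t)\|_{\mathcal{L}(\mathbb{W})}\le K$, satisfies $\|\mathcal{G}_t(h)\|_{\mathbb{W}}\le K\|h\|_{\mathrm{L}^1([a,b];\mathbb{W})}$, hence is bounded. Since every bounded linear operator between Banach spaces is continuous from the weak topology to the weak topology, the hypothesis $g_n\xrightharpoonup{w} g$ in $\mathrm{L}^1([a,b];\mathbb{W})$ forces $\mathcal{G}(g_n)(t)\xrightharpoonup{w}\mathcal{G}(g)(t)$ in $\mathbb{W}$ for every $t\in[a,b]$.

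Next, using the hypothesis that $\{g_n\}_{n=1}^{\infty}$ is integrably bounded, Lemma \ref{lem5.3.2} ensures that the sequence $\{\mathcal{G}(g_n)\}$ is relatively compact in $C([a,b];\mathbb{W})$. Consequently, every subsequence $\{\mathcal{G}(g_{n_k})\}$ admits a further subsequence, still denoted in the same way, converging uniformly on $[a,b]$ to some $v\in C([a,b];\mathbb{W})$. Uniform convergence implies pointwise norm convergence, and hence pointwise weak convergence, so $\mathcal{G}(g_{n_k})(t)\xrightharpoonup{w} v(t)$ in $\mathbb{W}$ for each $t\in[a,b]$. By uniqueness of weak limits together with the previous paragraph, $v(t)=\mathcal{G}(g)(t)$ for every $t\in[a,b]$, i.e.\ $v=\mathcal{G}(g)$. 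Since every subsequence has a further subsequence converging to the same limit $\mathcal{G}(g)$, the whole sequence converges, yielding $\mathcal{G}(g_n)\to\mathcal{G}(g)$ in $C([a,b];\mathbb{W})$.

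The only mildly delicate point is the identification of the limit pointwise via the weak-to-weak continuity of $\mathcal{G}_t$; this is a standard functional-analytic fact and is the sole nontrivial ingredient beyond the statement of Lemma \ref{lem5.3.2}. Everything else is bookkeeping: combining the compactness of $\{\mathcal{G}(g_n)\}$ with uniqueness of the weak pointwise limit. I therefore expect no real obstacle, only the need to state the subsequence argument cleanly.
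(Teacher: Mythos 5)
Your argument is correct and is essentially the intended one: the paper states the corollary without proof as an immediate consequence of Lemma \ref{lem5.3.2}, and the implicit reasoning is exactly your combination of relative compactness of $\{\mathcal{G}(g_n)\}$ in $C([a,b];\mathbb{W})$ with the weak-to-weak continuity of the bounded evaluation maps $\mathcal{G}_t$ and the subsequence/uniqueness-of-weak-limits principle. Nothing is missing.
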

	\subsection{Geometry of Banach spaces and duality mapping} In this subsection, we initially introduce some special geometry of Banach spaces and later we recall the notion of duality mapping and its important properties.
	\begin{Def} A Banach space $\mathbb{W}$ is said to be \emph{strictly convex} if for any $z_1,z_2\in\mathbb{W}$ with $\left\|z_1\right\|_{\mathbb{W}}=\left\|z_2\right\|_{\mathbb{W}}=1$ such that
		$\left\|\nu z_1+(1-\nu)z_2\right\|_{\mathbb{W}}<1,\ \mbox{for}\ 0<\nu<1.$
	\end{Def}
	\begin{Def} A Banach space $\mathbb{W}$ is said to be \emph{uniformly convex} if for any given $\epsilon>0,$ there exists a $\delta=\delta(\epsilon)$ such that
		$$\left\|z_1-z_2\right\|_{\mathbb{W}}\ge\epsilon \implies \left\|z_1+z_2\right\|_{\mathbb{W}}\le 2(1-\delta),$$
		where $z_1,z_2\in\mathbb{W}$ with $\left\|z_1\right\|_{\mathbb{W}}=1$ and $\left\|z_2\right\|_{\mathbb{W}}=1$.
	\end{Def}
\noindent 	Note that every \emph{uniformly convex} space $\mathbb{W}$ is also \emph{strictly convex}. The \emph{modulus of convexity}  of the Banach space $\mathbb{W}$ is defined as 
		\begin{align*}
			\delta_{\mathbb{W}}(\epsilon)=\inf\left\{1-\frac{\|z+y\|_{\mathbb{W}}}{2}:\|z\|_{\mathbb{W}}\le 1, \|y\|_{\mathbb{W}}\le 1, \|z-y\|_{\mathbb{W}}\ge\epsilon\right\}.
		\end{align*}
		The function $\delta_{\mathbb{W}}(\cdot)$ is defined on the interval $[0,2]$ is non-decreasing. In addition, a Banach space $\mathbb{W}$ is \emph{uniformly convex} if and only if $\delta_{\mathbb{W}}(\epsilon)>0$ for all $\epsilon>0$ (cf. \cite{IJM1975}).
		\begin{Def}
			A Banach space $\mathbb{W}$ is said to be \emph{uniformly smooth} if for any given $\epsilon>0$, there exists a $\delta>0$ such that for all $z_1,z_2\in\mathbb{W}$ with $\|z_1\|_{\mathbb{W}}=1$ and $\|z_2\|_{\mathbb{W}}\le\delta$, the inequality $$\frac{1}{2}\left(\|z_1+z_2\|_{\mathbb{W}}+\|z_1-z_2\|_{\mathbb{W}}\right)-1\le\epsilon\|z_2\|_{\mathbb{W}},$$ holds.
		\end{Def}
	\noindent 	The \emph{modulus of smoothness}  of the space $\mathbb{W}$ is given by
		\begin{align*}
			\rho_{_{\mathbb{W}}}(\tau)&=\sup\left\{\frac{\|z_1+\tau z_2\|_{\mathbb{W}}}{2}+\frac{\|z_1-\tau z_2\|_{\mathbb{W}}}{2}-1 : \|z_1\|_{\mathbb{W}}=, \|z_2\|_{\mathbb{W}}=1\right\}, \ \mbox{for all} \ \tau\in[0, \infty).
		\end{align*}
	\noindent 	Note that the space $\mathbb{W}$  is uniformly smooth if and only if (cf. \cite{IJM1975}) $\lim_{t\to 0^+}\frac{\rho_{_{\mathbb{W}}}(\tau)}{\tau}=0.$
	\begin{rem}\label{rem2.11}
		\begin{itemize}
			\item[(a)] A Banach space $\mathbb{W}$ is considered to be \emph{$p$-uniformly smooth}, where $1<p\le2$, if there exist an equivalent norm on $\mathbb{W}$ such that the modulus of smoothness $\rho_{_{\mathbb{W}}}(\tau)\le C\tau^p$ for some constant $C$. A Banach space is said to be \emph{$q$-uniformly convex}, where $2\le q< \infty$, if there exist an equivalent norm on $\mathbb{W}$ such that the modulus of convexity $\delta_{\mathbb{W}}(\epsilon)\ge C\epsilon^q$ for some constant $C>0$.
			\item[(b)] A space $\mathbb{W}$ is uniformly smooth if and only if its dual space $\mathbb{W}^*$ is uniformly convex. A space $\mathbb{W}$ is uniformly convex if and only if $\mathbb{W}^*$ is uniformly smooth (cf. \cite{SP2006}). Moreover, every uniformly convex or uniformly smooth space is $q$-uniformly convex and $p$-uniformly smooth for some $q<\infty$ and $p>1$ (see \cite{IJM1975}).
		\end{itemize}
	\end{rem}
	\begin{Ex}
		Here are some example of smooth and convex spaces (cf. \cite{PTRS2010,JMAA1991}).
		\begin{itemize}
			\item [(i)] Every Hilbert space is $2$-uniformly smooth and $2$-uniformly convex.
			\item [(ii)] Let $\Omega\subset\mathbb{R}^n$ be a measurable set. The space $\mathrm{L}^p(\Omega)$ for $2\le p<\infty$ is $2$-uniformly smooth and  $p$-uniformly convex.
			\item [(iii)] The Sobolev space $\mathrm{W}^{m,p}(\Omega)$ for $2\le p<\infty$ and $m\in\mathbb{N}$ is $2$-uniformly smooth and  $p$-uniformly convex.
		\end{itemize}
	\end{Ex}
	Let us recall the notion of the duality mapping and its property.
	\begin{Def}\label{Def2.5.1}
		A multi-valued map $\mathscr{J}:\mathbb{W}  \rightarrow 2^{\mathbb{W}^*}$  given by $$\mathscr{J}[z]=\{z^*\in \mathbb{W}^*:\langle z, z^*\rangle=\|z\|_{\mathbb{W}}^{2}=\|z^*\|_{\mathbb{W}^*}^{2}\}, \ \text{ for all } \ z\in \mathbb{W}.$$  is called the \emph{duality mapping}.
	\end{Def}
	Note that $\mathscr{J}[\lambda z]= \lambda\mathscr{J}[z]$, for all $\lambda\in\mathbb{R}$ and $z\in\mathbb{W}$.
	\begin{rem}\label{rem2.5.1}
		\begin{itemize}
			\item[(i)]
			If $\mathbb{W}$ is a reflexive Banach space with the norm $\left\|\cdot\right\|_{\mathbb{W}}$, then it can always be renormed such that both $\mathbb{W}$ and $\mathbb{W}^*$ becomes strictly convex  (cf. \cite{IJM1967}). According to Milman theorem (cf. \cite{Sp1978}), every uniformly convex Banach space is reflexive.
			\item [(ii)] The strict convexity of $\mathbb{W}^*$ ensures that the mapping $\mathscr{J}$ is single valued and demicontinuous (cf. \cite{AP1992}), that is,
			$$z_{k} \rightarrow z \ \text{in}\ \mathbb{W} \implies\mathscr{J}[z_{k}]\xrightharpoonup{{w}}\mathscr{J}[z] \ \text{in}\ \mathbb{W}^*\ \text{as}\ k \rightarrow \infty.$$
		\end{itemize}
	\end{rem}
	\subsection{Phase space} We present definition of the phase space $\mathfrak{B}$, as introduced in the profound work of Hale and Kato (cf. \cite{SPV1991}). Specifically, $\mathfrak{B}$ denotes a linear space comprising all mappings from $(-\infty, 0]$ into $\mathbb{W}$ equipped with the seminorm $\left\|\cdot\right\|_{\mathfrak{B}}$ and satisfying the following axioms:
	\begin{enumerate}
		\item [(A1)] Let $z: (-\infty, \sigma+\vartheta)\rightarrow \mathbb{W},\; \vartheta>0$ be a continuous function on $[\sigma, \sigma+\vartheta)$ and $z_{\sigma}\in \mathfrak{B}$. Then for every $t\in [\sigma, \sigma+\vartheta),$  the following conditions hold:
		\begin{itemize}
			\item [(i)] $z_{t}$ is in $\mathfrak{B}.$
			\item [(ii)] $\left\|z(t)\right\|_{\mathbb{W}}\leq K_{1}\left\|z_{t}\right\|_{\mathfrak{B}}$, where $K_{1}$ is a constant independent from $z(\cdot)$.
			\item [(iii)] $\left\|z_{t}\right\|_{\mathfrak{B}}\leq \varLambda(t-\sigma)\sup\{\left\|z(s)\right\|_{\mathbb{W}}: \sigma 
			\leq s\leq t\}+\varUpsilon(t-\sigma)\left\|z_{\sigma}\right\|_{\mathfrak{B}},$ where $\varLambda:[0, \infty)\rightarrow [1, \infty)$ is  continuous and $\varUpsilon:[0, \infty)\rightarrow [1, \infty)$ is locally bounded, and both $\varLambda,\varUpsilon$ are independent of $z(\cdot).$
		\end{itemize}
		\item [(A2)]For a given $z(\cdot)$ in (A1), the mapping $t\to z_t$ is continuous from $[\sigma, \sigma+\vartheta)$ into $\mathfrak{B}$.
		\item [(A3)] The space $\mathfrak{B}$ is complete. 
	\end{enumerate}  
	\begin{Ex}
		Let $\omega:(-\infty, -r]\to\mathbb{R}^+$ be a Lebesgue integrable function. Take $\mathfrak{B}=C_{r}\times\mathrm{L}^1_\omega(\mathbb{W})$ as the space of all mapping $\phi:(-\infty,0]\to\mathbb{W}$ such that $\phi\vert_{[-r,0]}\in C([-r,0];\mathbb{W}),$ for some $r>0$, $\phi$ is Lebesgue measurable on $(-\infty,-r)$, and $\omega\|\phi(\cdot)\|_{\mathbb{W}}$ is Lebesgue integrable on $(-\infty,-r]$. The seminorm in $\mathfrak{B}$ is given as 
		\begin{align*}
			\label{Bnorm}\left\|\phi\right\|_{\mathfrak{B}}:=\sup\{\|\phi(\theta)\|_\mathbb{W}: -r\le\theta\le0\} +\int_{-\infty}^{-r}\omega(\theta)\|\phi(\theta)\|_{\mathbb{W}}\mathrm{d}\theta.
		\end{align*} Furthermore, there exists a locally bounded function $\mathcal{E}:(-\infty,0]\to\mathbb{R}^+$ such that $\omega(t+\theta)\le \mathcal{E}(t)\omega(\theta),$ for all $t\le0$ and $\theta\in(-\infty,0)\backslash \varOmega_t$ where $\varOmega_t\subseteq(-\infty,0)$ is a set with Lebesgue measure zero. A simple example of $\omega$ is given by $\omega(\theta)=e^{\nu\theta}$ for some $\nu>0$. 
	\end{Ex}
	The space $\mathfrak{B}=C_{r}\times\mathrm{L}^1_\omega(\mathbb{W})$ verifies the conditions (A1)-(A3) for $t\ge0$ with $K_1=1,$
	\begin{equation*}
		\varLambda(t)=	\left\{
		\begin{aligned}
			1, \qquad\qquad\qquad\  &\mbox{for}\ 0\le t\le r,\\
			1+\int_{-t}^{-r}\omega(\theta)\mathrm{d}\theta,\ &\mbox{for}\ r<t
		\end{aligned}
		\right.
	\end{equation*}
	and 
	\begin{equation*}
		\varUpsilon(t)=	\left\{
		\begin{aligned}
			&\max\left\{1+\int_{-r-t}^{-r}\omega(\theta)\mathrm{d}\theta, \mathcal{E}(-t)\right\}, \ \mbox{for}\ 0\le t\le r,\\
			&\max\left\{\int_{-r-t}^{-r}\omega(\theta)\mathrm{d}\theta, \mathcal{E}(-t)\right\}, \quad\quad \mbox{for}\ r<t,
		\end{aligned}
		\right.
	\end{equation*}
	see Theorem 3.18, \cite{SPV1991}.
	\subsection{Mild solution and controllability operators} This subsection starts with the definition of a mild solution for the semilinear system \eqref{SEq}. Later, we introduce the controllability operators. Given $\psi\in\mathfrak{B}$ fixed, define  $f_i:J\to\mathbb{W}, i=1,2,$ as $$f_1(t)=-\int_{-\infty}^{0}\mathrm{G}(t-s)\psi(s)\mathrm{d}s \ \text{and} \ f_2(t)=\int_{-\infty}^{0} \mathrm{N}(t-s)\psi(s)ds.$$
	\begin{Def}[Mild solution]\label{Def2.19}
		A function $w:(-\infty,T]\to\mathbb{W}$ is called a \emph{mild solution} of the neutral system \eqref{SEq}, if $w_0=\psi\in\mathfrak{B}$, $f_1$ is differentiable with $f'_1\in\mathrm{L}^1(J;\mathbb{W}), w|_{J}\in C(J;\mathbb{W})$ and verify the following integral equation:
		\begin{align}\label{Md}
			w(t)&=\mathscr{R}(t)\psi(0)+\int_{0}^{t}\mathscr{R}(t-s)\left[\mathrm{B}u(s)+f(s,w_s)\right]\mathrm{d}s\nonumber\\&\qquad+\int_{0}^{t}\mathscr{R}(t-s)\left[f'_1(s)+f_2(s)\right]\mathrm{d}s, t\in J.
		\end{align}
	\end{Def}
	\begin{Def}
		The system \eqref{SEq} is said to be \emph{approximately controllable} over $J$ if, for any given function  $\psi\in\mathfrak{B}$ and any $\zeta_1\in\mathbb{W}$, and every $\epsilon>0$, there exist a control function $u\in\mathrm{L}^{2}(J;\mathbb{U})$ such that the mild solution $w(\cdot)$ of the problem \eqref{SEq} satisfy the following:
		$$\left\|w(T)-\zeta_1\right\|_{\mathbb{W}}\le\epsilon.$$ 
	\end{Def}
\noindent 	In the remaining part  of this article, we assume that $\mathbb{W}$ is a separable reflexive Banach space having uniformly convex dual, unless otherwise specified.
	Next, we define the operators 
	\begin{equation}\label{Copt}
		\left\{
		\begin{aligned}
		&	\mathrm{L}_Tu:=\int_0^T\mathscr{R}(T-s)\mathrm{B}u(t)\mathrm{d}t\\
		&	\Psi_{0}^{T}:=\mathrm{L}_T\mathrm{L}_T^*=\int^{T}_{0}\mathscr{R}(T-t)\mathrm{B}\mathrm{B}^{*}\mathscr{R}(T-t)^{*}\mathrm{d}t \\
		&	\mathrm{R}(\lambda,\Psi_{0}^{T}) :=(\lambda \mathrm{I}+\Psi_{0}^{T}\mathscr{J})^{-1},\ \lambda > 0.
		\end{aligned}
		\right.
	\end{equation}
	From the second expression of \eqref{Copt}, it is evident that the operator $\Psi_{0}^{T}:\mathbb{W}^*\to\mathbb{W}$ is  nonnegative and symmetric. It is clear from the third expression of \eqref{Copt} that the operator $\mathrm{R}(\lambda,\Psi_{0}^{T})$ is nonlinear. Further, note that if $\mathbb{W}$ is a separable Hilbert space identified by its own dual, then the duality mapping $\mathscr{J}$ becomes $\mathrm{I}$ (the identity operator), thus $\mathrm{R}(\lambda,\Psi_{0}^{T}):=(\lambda \mathrm{I}+\Psi_{0}^{T})^{-1},\ \lambda > 0$ is a linear operator.
	\begin{lem}\label{lem2.16}
		For $y\in\mathbb{W}$ and $\lambda>0$, the equation 
		\begin{align}\label{e2.7}
			\lambda z_\lambda+\Psi_{0}^{T}\mathscr{J}[z_\lambda]=\lambda y,
		\end{align}
		has a unique solution  $$z_\lambda=z_{\lambda}(y)=\lambda(\lambda \mathrm{I}+\Psi_{0}^{T}\mathscr{J})^{-1}(y).$$ Moreover
		\begin{align*}
			\left\|z_{\lambda}(y)\right\|_{\mathbb{W}}=\left\|\mathscr{J}[z_{\lambda}(y)]\right\|_{\mathbb{W}^*}\leq\left\|y\right\|_{\mathbb{W}}.
		\end{align*}
	\end{lem}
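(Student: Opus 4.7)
The plan is to establish the norm estimate first, derive uniqueness from a companion monotonicity argument, and then prove existence via the Browder--Minty surjectivity theorem after dualizing through $\mathscr{J}$. For the estimate, I will pair the equation $\lambda z_\lambda+\Psi_{0}^{T}\mathscr{J}[z_\lambda]=\lambda y$ in $\langle\cdot,\cdot\rangle$ with $\mathscr{J}[z_\lambda]\in\mathbb{W}^*$. Using $\langle z_\lambda,\mathscr{J}[z_\lambda]\rangle=\|z_\lambda\|_{\mathbb{W}}^2=\|\mathscr{J}[z_\lambda]\|_{\mathbb{W}^*}^2$, discarding the nonnegative term $\langle\Psi_{0}^{T}\mathscr{J}[z_\lambda],\mathscr{J}[z_\lambda]\rangle\ge 0$, and bounding the right-hand side by $\lambda\|y\|_{\mathbb{W}}\|\mathscr{J}[z_\lambda]\|_{\mathbb{W}^*}$, one gets $\lambda\|z_\lambda\|_{\mathbb{W}}^2\le\lambda\|y\|_{\mathbb{W}}\|z_\lambda\|_{\mathbb{W}}$, which is the stated bound. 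For uniqueness, if $z^{(1)},z^{(2)}$ both solve the equation, subtracting and pairing with $\mathscr{J}[z^{(1)}]-\mathscr{J}[z^{(2)}]$ produces a sum of two nonnegative terms equal to zero (the first by monotonicity of the duality mapping, the second by nonnegativity of $\Psi_{0}^{T}$). Hence $\langle z^{(1)}-z^{(2)},\mathscr{J}[z^{(1)}]-\mathscr{J}[z^{(2)}]\rangle=0$; expanding and applying the dual-norm inequality forces $(\|z^{(1)}\|_{\mathbb{W}}-\|z^{(2)}\|_{\mathbb{W}})^2\le 0$, so the norms coincide, after which the equality case of the duality pairing, combined with the strict convexity of $\mathbb{W}$ (available by renorming, Remark \ref{rem2.5.1}(i)), forces $z^{(1)}=z^{(2)}$.

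For existence, I substitute $z^*=\mathscr{J}[z]$, so the equation rewrites as $B(z^*):=\lambda\mathscr{J}^{-1}[z^*]+\Psi_{0}^{T}z^*=\lambda y$, viewed as an operator $B:\mathbb{W}^*\to\mathbb{W}^{**}\cong\mathbb{W}$. The inverse $\mathscr{J}^{-1}$ is well defined and coincides with the duality mapping of $\mathbb{W}^*$, since $\mathscr{J}$ is bijective (injective by strict convexity of $\mathbb{W}$ and surjective by reflexivity). I will verify the three hypotheses of Browder--Minty on the reflexive space $\mathbb{W}^*$: \emph{monotonicity}, since both $\mathscr{J}^{-1}$ (a duality mapping) and $\Psi_{0}^{T}$ (nonnegative symmetric) are monotone; \emph{hemicontinuity}, from the linearity of $\Psi_{0}^{T}$ combined with the demicontinuity of $\mathscr{J}^{-1}$ (Remark \ref{rem2.5.1}(ii), using strict convexity of $\mathbb{W}^{**}=\mathbb{W}$); and \emph{coercivity}, from $\langle B(z^*),z^*\rangle\ge\lambda\|z^*\|_{\mathbb{W}^*}^2$. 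Browder--Minty then yields $z^*\in\mathbb{W}^*$ with $B(z^*)=\lambda y$, and $z_\lambda:=\mathscr{J}^{-1}[z^*]$ solves the original equation. The explicit formula $z_\lambda=\lambda(\lambda\mathrm{I}+\Psi_{0}^{T}\mathscr{J})^{-1}(y)$ then follows from the homogeneity $\mathscr{J}[\lambda z]=\lambda\mathscr{J}[z]$.

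The principal obstacle is existence: a direct Banach contraction on $z\mapsto y-\lambda^{-1}\Psi_{0}^{T}\mathscr{J}[z]$ fails because $\mathscr{J}$ is not Lipschitz with a useful constant, and Schauder is unavailable without compactness hypotheses on $\Psi_{0}^{T}$. The dualization through $\mathscr{J}$ is the critical device, since it converts the problem into a monotone, coercive equation on the reflexive space $\mathbb{W}^*$---exactly the Browder--Minty setting.
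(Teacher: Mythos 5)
Your strategy is the natural one and is, in substance, the argument behind the result the paper actually relies on: the paper does not prove Lemma \ref{lem2.16} itself but defers to Lemma 2.2 of \cite{SIAM2003}, which is established by the same circle of ideas you use --- pairing \eqref{e2.7} with $\mathscr{J}[z_\lambda]$ and discarding the nonnegative Gramian term to get $\|z_\lambda\|_{\mathbb{W}}=\|\mathscr{J}[z_\lambda]\|_{\mathbb{W}^*}\le\|y\|_{\mathbb{W}}$, and a monotonicity/duality argument on the reflexive dual for solvability. Your norm estimate, the coercivity computation, and the derivation of the explicit formula from the homogeneity $\mathscr{J}[\lambda z]=\lambda\mathscr{J}[z]$ are all fine.

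The genuine weak point is your repeated use of strict convexity of $\mathbb{W}$ itself. The standing hypothesis is only that $\mathbb{W}$ is reflexive with uniformly convex dual; this makes $\mathbb{W}$ uniformly smooth and $\mathscr{J}$ single-valued and demicontinuous, but it does \emph{not} make $\mathbb{W}$ strictly convex, so the injectivity of $\mathscr{J}$, the single-valuedness and demicontinuity of $\mathscr{J}^{-1}$ (the duality map of $\mathbb{W}^*$, which requires $\mathbb{W}^*$ smooth, i.e.\ $\mathbb{W}$ strictly convex), and the equality-case step closing your uniqueness argument are not available as stated. The proposed remedy ``by renorming (Remark \ref{rem2.5.1}(i))'' does not work: $\mathscr{J}$, equation \eqref{e2.7} and the asserted bound are all tied to the given norm, and under an equivalent norm the duality map --- hence the equation being solved --- changes. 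Both uses can, however, be removed. For uniqueness, your own computation already gives $\langle\Psi_{0}^{T}(\mathscr{J}[z^{(1)}]-\mathscr{J}[z^{(2)}]),\mathscr{J}[z^{(1)}]-\mathscr{J}[z^{(2)}]\rangle=0$; since $\langle\Psi_{0}^{T}w^*,w^*\rangle=\int_0^T\|\mathrm{B}^*\mathscr{R}(T-t)^*w^*\|_{\mathbb{U}}^2\,\mathrm{d}t$ (Remark \ref{rem3.4}), or by the generalized Cauchy--Schwarz inequality for the nonnegative symmetric form $(u^*,v^*)\mapsto\langle\Psi_{0}^{T}u^*,v^*\rangle$, this forces $\Psi_{0}^{T}(\mathscr{J}[z^{(1)}]-\mathscr{J}[z^{(2)}])=0$, and subtracting the two copies of \eqref{e2.7} then yields $\lambda(z^{(1)}-z^{(2)})=0$ with no convexity of $\mathbb{W}$ needed. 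For existence, either state explicitly the assumption (used de facto elsewhere in the paper, e.g.\ in the proof of Lemma \ref{lem2.17}, and satisfied by the intended applications $\mathrm{L}^p$, $p\ge 2$) that $\mathbb{W}$ itself is uniformly convex, which legitimizes your single-valued Browder--Minty setup; or keep only the stated hypotheses and run the dualized problem with the possibly set-valued duality map of $\mathbb{W}^*$, which is maximal monotone, so that its sum with the everywhere-defined bounded linear monotone operator $\Psi_{0}^{T}$ is maximal monotone and coercive, hence surjective on the reflexive space $\mathbb{W}^*$; any element $z$ of the dual duality map at the resulting $z^*$ then satisfies $\mathscr{J}[z]=z^*$ automatically (because $\mathscr{J}$ is single-valued), so it solves \eqref{e2.7}.
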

	A proof of the aforementioned lemma can be derived in a manner similar to that used in the proof of Lemma 2.2 in \cite{SIAM2003}.
	\begin{lem}\label{lem2.17}
		The operator $\mathrm{R}(\lambda,\Psi_{0}^{T}):\mathbb{W}\to\mathbb{W},\ \lambda > 0$  is uniformly continuous in every bounded subset of $\mathbb{W}$.
	\end{lem}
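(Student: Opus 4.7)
The plan is to exploit the characterization of $\mathrm{R}(\lambda,\Psi_0^T)(y)$ via the fixed point equation from Lemma \ref{lem2.16}, namely $\lambda z_\lambda(y)+\Psi_0^T\mathscr{J}[z_\lambda(y)]=\lambda y$, together with the uniform monotonicity of the duality mapping $\mathscr{J}^{-1}=\mathscr{J}_{\mathbb{W}^*}$ that is available because $\mathbb{W}^*$ is uniformly (hence $q$-uniformly, by Remark \ref{rem2.11}) convex. Fix $\lambda>0$ and a bounded set $\mathcal{B}\subset\mathbb{W}$ with $\|y\|_{\mathbb{W}}\le R$ for $y\in\mathcal{B}$. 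Given $y_1,y_2\in\mathcal{B}$, set $z_i:=z_\lambda(y_i)=\lambda\mathrm{R}(\lambda,\Psi_0^T)(y_i)$. By Lemma \ref{lem2.16}, $\|z_i\|_{\mathbb{W}}\le R$ and consequently $\|\mathscr{J}[z_i]\|_{\mathbb{W}^*}=\|z_i\|_{\mathbb{W}}\le R$, so all quantities in the argument stay in a fixed ball.

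First, I would subtract the two defining equations and pair the result with $\mathscr{J}[z_1]-\mathscr{J}[z_2]\in\mathbb{W}^*$. Since $\Psi_0^T$ is nonnegative and symmetric, the resulting inner product $\langle\Psi_0^T(\mathscr{J}[z_1]-\mathscr{J}[z_2]),\mathscr{J}[z_1]-\mathscr{J}[z_2]\rangle$ is nonnegative and can be dropped from the left-hand side, leaving
\begin{align*}
\langle z_1-z_2,\mathscr{J}[z_1]-\mathscr{J}[z_2]\rangle\le \langle y_1-y_2,\mathscr{J}[z_1]-\mathscr{J}[z_2]\rangle\le 2R\,\|y_1-y_2\|_{\mathbb{W}}.
\end{align*}

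Next, I would invoke the characterization of $q$-uniform convexity via the duality mapping: since $\mathbb{W}^*$ is $q$-uniformly convex for some $q\ge 2$ (Remark \ref{rem2.11}) and since $\mathbb{W}$ is reflexive and smooth (so $\mathscr{J}^{-1}=\mathscr{J}_{\mathbb{W}^*}$ is single-valued), there exists $c_q=c_q(R)>0$ such that for $x^*,y^*\in\mathbb{W}^*$ with norms $\le R$,
\begin{align*}
\langle\mathscr{J}_{\mathbb{W}^*}[x^*]-\mathscr{J}_{\mathbb{W}^*}[y^*],x^*-y^*\rangle\ge c_q\|x^*-y^*\|_{\mathbb{W}^*}^{q}.
\end{align*}
Taking $x^*=\mathscr{J}[z_1], y^*=\mathscr{J}[z_2]$ and combining with the previous step yields
\begin{align*}
\|\mathscr{J}[z_1]-\mathscr{J}[z_2]\|_{\mathbb{W}^*}\le (2R/c_q)^{1/q}\|y_1-y_2\|_{\mathbb{W}}^{1/q}.
\end{align*}

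Finally, I would feed this Hölder bound back into the subtracted equation $\lambda(z_1-z_2)=\lambda(y_1-y_2)-\Psi_0^T(\mathscr{J}[z_1]-\mathscr{J}[z_2])$ to conclude
\begin{align*}
\|\mathrm{R}(\lambda,\Psi_0^T)(y_1)-\mathrm{R}(\lambda,\Psi_0^T)(y_2)\|_{\mathbb{W}}\le \frac{1}{\lambda}\|y_1-y_2\|_{\mathbb{W}}+\frac{\|\Psi_0^T\|_{\mathcal{L}(\mathbb{W}^*;\mathbb{W})}}{\lambda}(2R/c_q)^{1/q}\|y_1-y_2\|_{\mathbb{W}}^{1/q},
\end{align*}
and the right-hand side tends to zero as $\|y_1-y_2\|_{\mathbb{W}}\to 0$ with a rate independent of $y_1,y_2\in\mathcal{B}$, proving uniform continuity. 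The main obstacle I anticipate is invoking the precise form of the uniform monotonicity estimate for the duality mapping in $q$-uniformly convex spaces; this inequality is classical (Xu \cite{IJM1991} or Prüss-type estimates), but it is not recorded verbatim in the preliminaries, so it must be cited cleanly. Everything else (the bound $\|z_\lambda(y)\|\le\|y\|$, nonnegativity of $\Psi_0^T$, reflexivity and smoothness of $\mathbb{W}$) is either in Lemma \ref{lem2.16} or immediate from the standing assumptions on $\mathbb{W}$.
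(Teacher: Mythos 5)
Your proposal is correct, and it reaches the conclusion by a genuinely different decomposition than the paper. The paper works with the forward map: for $w_1,w_2$ in a ball it pairs $(\lambda\mathrm{I}+\Psi_{0}^{T}\mathscr{J})(w_2)-(\lambda\mathrm{I}+\Psi_{0}^{T}\mathscr{J})(w_1)$ with $\mathscr{J}[w_2]-\mathscr{J}[w_1]$ and drops the nonnegative $\Psi_0^T$-term exactly as you do, but it then combines two duality-map estimates, namely the uniform monotonicity of $\mathscr{J}$ coming from $q$-uniform convexity of $\mathbb{W}$ itself (estimate \eqref{e2.9}, via Theorem 1.6.4 of \cite{SP2006}) and the H\"older continuity of $\mathscr{J}$ on bounded sets coming from $\tfrac{p}{p-1}$-uniform convexity of $\mathbb{W}^*$ (estimates \eqref{e2.12}--\eqref{e2.14}), to obtain the reverse H\"older bound \eqref{e2.15} for $\lambda\mathrm{I}+\Psi_0^T\mathscr{J}$, and finally inverts. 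You instead subtract the two resolvent equations from Lemma \ref{lem2.16}, test against $\mathscr{J}[z_1]-\mathscr{J}[z_2]$, first control $\|\mathscr{J}[z_1]-\mathscr{J}[z_2]\|_{\mathbb{W}^*}$ through the uniform monotonicity of the duality map of $\mathbb{W}^*$, and then recover $\|z_1-z_2\|_{\mathbb{W}}$ by back-substitution using the boundedness of $\Psi_0^T=\mathrm{L}_T\mathrm{L}_T^*$. The inequality you were worried about citing is precisely the one the paper itself invokes as Corollary 1.6.7 of \cite{SP2006} (see \eqref{e2.12}), or Xu--Roach \cite{JMAA1991}, so it is available. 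What your route buys: it uses only the power-type uniform convexity of $\mathbb{W}^*$, i.e.\ exactly the standing hypothesis, whereas the paper's step \eqref{e2.9} additionally asserts $q$-uniform convexity of $\mathbb{W}$ itself (a renorming-based claim beyond ``reflexive with uniformly convex dual''); moreover, your constants are tied to the radius $R$ of the set of arguments $y$ of $\mathrm{R}(\lambda,\Psi_0^T)$ via $\|z_\lambda(y)\|_{\mathbb{W}}\le\|y\|_{\mathbb{W}}$ from Lemma \ref{lem2.16}, which is the bounded set named in the statement, while the paper's constant is attached to a ball containing the images and the return to bounded sets of arguments is left implicit. Two harmless slips: the back-substitution produces the factor $\|\Psi_0^T\|_{\mathcal{L}(\mathbb{W}^*;\mathbb{W})}/\lambda^2$, not $/\lambda$, in the second term (irrelevant since $\lambda>0$ is fixed); and you do not need $\mathscr{J}_{\mathbb{W}^*}=\mathscr{J}^{-1}$ to be single-valued (that would require strict convexity of $\mathbb{W}$, which is not assumed) --- it suffices that $z_i$ belongs to the duality set of $\mathscr{J}[z_i]$, which holds by the definition of $\mathscr{J}$, and the monotonicity inequality is valid for that selection.
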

	\begin{proof}
		Let us define a set
		$$B_r:=\{w\in\mathbb{W}:\|w\|_{\mathbb{W}}\le r\},$$
		where $r$ is a positive constant.
		Now, for any $w_1,w_2\in B_r$, let us estimate
		\begin{align}\label{e2.8}
			&\left\langle (\lambda \mathrm{I}+\Psi_{0}^{T}\mathscr{J})(w_2)-(\lambda \mathrm{I}+\Psi_{0}^{T}\mathscr{J})(w_1), \mathscr{J}[w_2]-\mathscr{J}[w_1]\right\rangle\nonumber\\&=\Big\langle \lambda(w_2-w_1), \mathscr{J}[w_2]-\mathscr{J}[w_1]\Big\rangle+\left\langle \Psi_0^T(\mathscr{J}[w_2]-\mathscr{J}[w_1]), \mathscr{J}[w_2]-\mathscr{J}[w_1]\right\rangle\nonumber\\&=\lambda\left\langle w_2-w_1, \mathscr{J}[w_2]-\mathscr{J}[w_1]\right\rangle+\|\mathrm (\mathrm{L}_T)^*(\mathscr{J}[w_2]-\mathscr{J}[w_1])\|_{\mathbb{U}}^2\nonumber\\&\ge\lambda\left\langle w_2-w_1,\mathscr{J}[w_2]-\mathscr{J}[w_1]\right\rangle.
		\end{align}
		Since the space $\mathbb{W}^*$ is uniformly convex,  by Remark \ref{rem2.11}, the space $\mathbb{W}$ is uniformly smooth. Moreover, the space $\mathbb{W}$ is $q$-uniformly convex and $p$-uniformly smooth for some $2\le q<\infty$ and $1<p\le2$. By applying Theorem 1.6.4, \cite{SP2006} and the definition of $q$-uniformly convexity, we obtain
		\begin{align}\label{e2.9}
			\left\langle w_2-w_1,\mathscr{J}[w_2]-\mathscr{J}[w_1]\right\rangle\ge \frac{C}{2Lc_r^q}\|w_2-w_1\|_{\mathbb{W}}^q,
		\end{align}
		where $1<L<1.7$ is the Figiel constant, $c_r=2\max\{1,r\}$ and $C>0$. Combining \eqref{e2.8} and \eqref{e2.9}, we have the following:
		\begin{align}\label{e2.10}
			&\left\langle (\lambda \mathrm{I}+\Psi_{0}^{T}\mathscr{J})(w_2)-(\lambda \mathrm{I}+\Psi_{0}^{T}\mathscr{J})(w_1), \mathscr{J}[w_2]-\mathscr{J}[w_1]\right\rangle\ge \frac{\lambda C}{2Lc_r}\|w_2-w_1\|_{\mathbb{W}}^q.
		\end{align}
		Using  the Cauchy-Schwarz inequality, we compute
		\begin{align}\label{e2.11}
			&\left\langle (\lambda \mathrm{I}+\Psi_{0}^{T}\mathscr{J})(w_2)-(\lambda \mathrm{I}+\Psi_{0}^{T}\mathscr{J})(w_1), \mathscr{J}[w_2]-\mathscr{J}[w_1]\right\rangle\nonumber\\&\le \|(\lambda \mathrm{I}+\Psi_{0}^{T}\mathscr{J})(w_2)-(\lambda \mathrm{I}+\Psi_{0}^{T}\mathscr{J})(w_1)\|_{\mathbb{W}}\|\mathscr{J}[w_2]-\mathscr{J}[w_1]\|_{\mathbb{W}^*}.
		\end{align} 
		Note that the space $\mathbb{W}$ is $p$-uniformly smooth implies that the dual space $\mathbb{W}^*$ is $\frac{p}{p-1}$-uniformly convex. Using Corollary 1.6.7, \cite{SP2006}, we obtain
		\begin{align}\label{e2.12}
			\left\langle \mathscr{J}[w_2]-\mathscr{J}[w_1], w_2-w_1\right\rangle\ge \frac{\bar{C}}{2Lc_r^{\frac{p}{p-1}}}\|\mathscr{J}[w_2]-\mathscr{J}[w_1]\|^{\frac{p}{p-1}}_{\mathbb{W}^*},
		\end{align}
		where $\bar{C}$ is a positive constant. Once again using the Cauchy-Schwarz inequality, we estimate
		\begin{align}\label{e2.13}
			\left\langle \mathscr{J}[w_2]-\mathscr{J}[w_1], w_2-w_1\right\rangle\le \|\mathscr{J}[w_2]-\mathscr{J}[w_1]\|_{\mathbb{W}^*}\|w_2-w_1\|_{\mathbb{W}}.	
		\end{align}
		From the estimates \eqref{e2.12} and \eqref{e2.13}, we get
		\begin{align}\label{e2.14}
			\|\mathscr{J}[w_2]-\mathscr{J}[w_1]\|_{\mathbb{W}^*}\le \frac{(2L)^{p-1}c_r^p}{\bar{C}} \|w_2-w_1\|_{\mathbb{W}}^{p-1}.
		\end{align}
		Combining \eqref{e2.11} and \eqref{e2.14}, we compute
		\begin{align*}
			&\left\langle (\lambda \mathrm{I}+\Psi_{0}^{T}\mathscr{J})(w_2)-(\lambda \mathrm{I}+\Psi_{0}^{T}\mathscr{J})(w_1), \mathscr{J}[w_2]-\mathscr{J}[w_1]\right\rangle\nonumber\\&\le \frac{(2L)^{p-1}c_R^p}{\bar{C}}\|(\lambda \mathrm{I}+\Psi_{0}^{T}\mathscr{J})(w_2)-(\lambda \mathrm{I}+\Psi_{0}^{T}\mathscr{J})(w_1)\|_{\mathbb{W}} \|w_2-w_1\|_{\mathbb{W}}^{p-1}.
		\end{align*}
		Further from the estimates \eqref{e2.10} and \eqref{e2.14}, we have
		\begin{align}\label{e2.15}
			\|(\lambda \mathrm{I}+\Psi_{0}^{T}\mathscr{J})(w_2)-(\lambda \mathrm{I}+\Psi_{0}^{T}\mathscr{J})(w_1)\|_{\mathbb{W}}\ge C_{R} \|w_2-w_1\|_{\mathbb{W}}^s,
		\end{align}
		where $C_{r}=\frac{\lambda C}{\bar{C}(2L)^pc_r^{p+1}}$ and $s=q-p+1\in[1,\infty)$. Since the map $(\lambda\mathrm{I}+\Psi_{0}^{T}\mathscr{J})$  is invertible,  there exist $x_1,x_2\in\mathbb{W}$ such that $$(\lambda \mathrm{I}+\Psi_{0}^{T}\mathscr{J})^{-1}(x_2)=w_2, \  (\lambda \mathrm{I}+\Psi_{0}^{T}\mathscr{J})^{-1}(x_1)=w_1.$$ This implies $$(\lambda\mathrm{I}+\Psi_{0}^{T}\mathscr{J})(w_2)=x_2, \  (\lambda \mathrm{I}+\Psi_{0}^{T}\mathscr{J})(w_1)=x_1.$$
		Finally, from the estimate \eqref{e2.15}, we obtain
		\begin{align*}
			\|(\lambda \mathrm{I}+\Psi_{0}^{T}\mathscr{J})^{-1}(x_2)-(\lambda \mathrm{I}+\Psi_{0}^{T}\mathscr{J})^{-1}(x_1)\|_{\mathbb{W}}\le \tilde{C_r} \|x_2-x_1\|_{\mathbb{W}}^{\theta}
		\end{align*}
		where $\tilde{C_r}=\frac{1}{C_r}>0$ and $\theta=\frac{1}{s}\ (0<\theta\le 1)$. Thus, the system is uniformly continuous on every bounded subset of the space $\mathbb{W}$.
	\end{proof}
	To ensure the existence of a mild solution and the approximate controllability of the system \eqref{SEq}, we impose the following assumptions.
	\begin{Ass}\label{ass2.4} Let us take the following assertions:
		\begin{enumerate}
			\item [\textit{(H0)}] The family $z_{\lambda}=z_{\lambda}(y)=\lambda\mathrm{R}(\lambda,\Psi_{0}^{T})(y) \rightarrow 0$ as $\lambda\downarrow 0$ for every $y\in \mathbb{W}$, under the strong topology, where $z_{\lambda}(y)$ represents a solution of the equation \eqref{e2.7}.
			\item[\textit{(H1)}] Let $\mathrm{R}(\lambda_0,\mathrm{A})$ be compact for some $\lambda_0\in\rho(\mathrm{A})$.
			\item [\textit{(H2)}]   The function $f(t,\cdot): \mathfrak{B}\rightarrow \mathbb{W}$ is continuous for a.e. $t\in J$. The map $t\mapsto f(t, \phi) $ is strongly measurable in $J$ for each $ \phi\in \mathfrak{B}$. Also, there exists a function $\gamma\in\mathrm{L}^1(J;\mathbb{R}^+)$ such that 
			$$\|f(t,\phi)\|_{\mathbb{W}}\le\gamma(t), \ \mbox{for a.e.}\ t\in J \ \mbox{and for all}\ \phi\in\mathfrak{B}.$$
		\end{enumerate}
	\end{Ass}
	\section{Linear Control system}\label{LCS}\setcounter{equation}{0} 
	The following section is dedicated to explore the approximate controllability of a linear control system corresponding to \eqref{SEq}. In this section, we first formulate an optimal control problem and subsequently discuss its relationship to the approximate controllability of the linear control system. We first define the \emph{mild} and \emph{classical} solutions of the following non-homogeneous linear system:
	\begin{equation}\label{NLEq}
		\left\{
		\begin{aligned}
			\frac{\mathrm{d}}{\mathrm{d}t}\left[w(t)+\int_{0}^{t}\mathrm{G}(t-s)w(s)\mathrm{d}s\right]&=\mathrm{A}w(t)+\int_{0}^{t}\mathrm{N}(t-s)w(s)\mathrm{d}s+g(t),\ t\in(0,T],\\
			w(0)&=\zeta\in\mathbb{W}.
		\end{aligned}
		\right.
	\end{equation}
	If the function $g\in\mathrm{L}^1(J;\mathbb{W})$, then a function $w\in C(J;\mathbb{W})$ given by the  expression
	\begin{align}\label{3.2}
		w(t)&=\mathscr{R}(t)\zeta+\int_{0}^{t}\mathscr{R}(t-s)g(s), t\in J,
	\end{align}
	is called a \emph{mild solution} of the system \eqref{NLEq}. Moreover, if $\zeta\in D(\mathrm{A})$ and $g\in C(J;D_{1}(\mathrm{A}))$, then the function $w(\cdot)$ given in \eqref{3.2} is a \emph{classical solution}. The work in \cite{JIEA2011} thoroughly examines the existence of both mild and classical solutions for the non-homogeneous linear system mentioned above.
	\subsection{Optimal control problem and approximate controllability} We now formulate an optimal control problem to produce the approximate controllability of the linear system. For this, we consider a linear regulator problem, which involves minimizing a cost functional defined as follows
	\begin{equation}\label{CF}
		\mathscr{F}(w,u)=\left\|w(T)-\zeta_1\right\|^{2}_{\mathbb{W}}+\lambda\int^{T}_{0}\left\|u(t)\right\|^{2}_{\mathbb{U}}\mathrm{d}t,
	\end{equation}
	where $\lambda >0, \zeta_1\in\mathbb{W}$ and $w(\cdot)$ is a mild solution of the system
	\begin{equation}\label{LEq1}
		\left\{
		\begin{aligned}
			\frac{\mathrm{d}}{\mathrm{d}t}\left[w(t)+\int_{0}^{t}\mathrm{G}(t-s)w(s)\mathrm{d}s\right]&=\mathrm{A}w(t)+\int_{0}^{t}\mathrm{N}(t-s)w(s)\mathrm{d}s+\mathrm{B}u(t),\ t\in(0,T],\\
			w(0)&=\zeta\in\mathbb{W},
		\end{aligned}
		\right.
	\end{equation}
	with control $u(\cdot)\in \mathbb{U}$. Since $\mathrm{B}u\in\mathrm{L}^1(J;\mathbb{W})$, the system \eqref{LEq1} has a unique mild solution $w\in \mathrm{C}(J;\mathbb{W}) $ given by
	\begin{align*}
		w(t)= \mathscr{R}(t)\zeta+\int^{t}_{0}\mathscr{R}(t-s)\mathrm{B}u(s)\mathrm{d}s,\ t\in J,
	\end{align*}
	for any $u\in\mathcal{U}_{\mathrm{ad}}=\mathrm{L}^2(J;\mathbb{U})$ (class of \emph{admissible controls}). The \emph{admissible class} is given by  $$\mathcal{A}_{\mathrm{ad}}:=\big\{(w,u) :w\ \text{is a unique mild solution of}\ \eqref{LEq1} \ \text{associated with the control}\ u\in\mathcal{U}_{\mathrm{ad}}\big\}.$$ Since for any $u \in \mathcal{U}_{\mathrm{ad}}$, there is a unique mild solution of the system \eqref{LEq1}. Hence, the set $\mathcal{A}_{\mathrm{ad}}$ is nonempty. Now, the optimal control problem is to find $(w^0,u^0)\in\mathcal{A}_{\mathrm{ad}}$ such that 
	\begin{align}\label{opt}
	 \mathscr{F}(w^0,u^0) =	\min_{ (w,u) \in \mathcal{A}_{\mathrm{ad}}}  \mathscr{F}(w,u).
	\end{align}
	The following theorem determines the existence of an optimal pair for the problem outlined in \eqref{opt}. In order to prove the following theorem we will use the similar technique as discussed in \cite{MTM-20}.
	\begin{theorem}\label{exi}
		For any  $\zeta\in\mathbb{W}$, the minimization problem \eqref{opt} possesses a unique optimal pair $(w^0,u^0)\in\mathcal{A}_{\mathrm{ad}}$.
	\end{theorem}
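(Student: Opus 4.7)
The plan is to apply the direct method of the calculus of variations. Since the control-to-state map $u\mapsto w$ determined by \eqref{LEq1} is affine and continuous, I would view $\mathscr{F}$ as a functional on $\mathcal{U}_{\mathrm{ad}}$ alone and exploit its coercivity, weak lower semicontinuity, and strict convexity, which together yield both existence and uniqueness of a minimizer.

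First, I would note that $\mathscr{F}(w,u)\ge 0$ on $\mathcal{A}_{\mathrm{ad}}$, so $m:=\inf_{\mathcal{A}_{\mathrm{ad}}}\mathscr{F}\in[0,\infty)$, and choose a minimizing sequence $\{(w^n,u^n)\}\subset\mathcal{A}_{\mathrm{ad}}$. From $\lambda\|u^n\|^2_{\mathrm{L}^2(J;\mathbb{U})}\le\mathscr{F}(w^n,u^n)$ together with $\lambda>0$, the sequence $\{u^n\}$ is bounded in the Hilbert space $\mathrm{L}^2(J;\mathbb{U})$; by reflexivity I extract a subsequence (still denoted $\{u^n\}$) with $u^n\xrightharpoonup{w}u^0$. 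Let $w^0$ be the unique mild solution of \eqref{LEq1} associated with $u^0$, so that $(w^0,u^0)\in\mathcal{A}_{\mathrm{ad}}$.

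The crucial step is passing to the limit in $\mathscr{F}$. Writing $w^n(T)=\mathscr{R}(T)\zeta+\mathrm{L}_Tu^n$ and $w^0(T)=\mathscr{R}(T)\zeta+\mathrm{L}_Tu^0$, the operator $\mathrm{L}_T:\mathrm{L}^2(J;\mathbb{U})\to\mathbb{W}$ is linear and bounded (the bound $K\|\mathrm{B}\|_{\mathcal{L}(\mathbb{U};\mathbb{W})}\sqrt{T}$ following directly from Lemma \ref{lem2.5} and the Cauchy--Schwarz inequality), hence weak-to-weak continuous, so $w^n(T)\xrightharpoonup{w}w^0(T)$ in $\mathbb{W}$. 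Because $\|\cdot\|^2_\mathbb{W}$ and $\|\cdot\|^2_{\mathrm{L}^2(J;\mathbb{U})}$ are convex and continuous, they are weakly lower semicontinuous; therefore
$$\mathscr{F}(w^0,u^0)\le\liminf_{n\to\infty}\mathscr{F}(w^n,u^n)=m,$$
and since $(w^0,u^0)\in\mathcal{A}_{\mathrm{ad}}$ the reverse inequality is automatic, making $(w^0,u^0)$ a minimizer.

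For uniqueness, I would regard $\mathscr{F}$ as a function of $u$ alone via the affine control-to-state map. The terminal term $\|\mathscr{R}(T)\zeta+\mathrm{L}_Tu-\zeta_1\|^2_\mathbb{W}$ is convex, being the square of a norm precomposed with an affine map, while $\lambda\|u\|^2_{\mathrm{L}^2(J;\mathbb{U})}$ is strictly convex since $\mathbb{U}$ is Hilbert and $\lambda>0$; the sum is therefore strictly convex, forcing uniqueness of $u^0$ and, by affine dependence, of $w^0$. The only subtlety is confirming that the weak limit of $w^n(T)$ coincides with $w^0(T)$; this is transparent here because of the linearity and boundedness of $\mathrm{L}_T$, so no compactness assumption on the resolvent is required for this theorem.
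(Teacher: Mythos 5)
Your proposal is correct, and it takes a genuinely different route from the paper. The paper also starts with a minimizing sequence and boundedness of $\{u^n\}$ in $\mathrm{L}^2(J;\mathbb{U})$, but it then passes to weak limits of \emph{both} $\{u^n\}$ and $\{w^n\}$ and, crucially, invokes the compactness of the convolution operator $g\mapsto\int_0^{\cdot}\mathscr{R}(\cdot-s)g(s)\,\mathrm{d}s$ (Lemma \ref{lem2.12}, which rests on compactness of the resolvent $\mathscr{R}(t)$ for $t>0$) to upgrade $\mathrm{B}u^{n_j}\xrightharpoonup{w}\mathrm{B}u^0$ to strong convergence $w^{n_j}\to w^0$ in $C(J;\mathbb{W})$, before applying weak lower semicontinuity of $\mathscr{F}$. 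You instead reduce everything to the terminal map: since only $w^n(T)=\mathscr{R}(T)\zeta+\mathrm{L}_Tu^n$ enters the cost, the boundedness (hence weak-to-weak continuity) of $\mathrm{L}_T$ already gives $w^n(T)\xrightharpoonup{w}w^0(T)$, and weak lower semicontinuity of the two squared norms finishes the existence argument — no compactness of $\mathscr{R}(t)$ and no convergence of the full trajectory are needed. This is a real gain in economy and generality: the paper's proof silently uses the hypothesis (H1) (compact resolvent), which does not appear in the statement of the theorem, whereas your argument works under the standing assumptions alone. Your uniqueness argument is also sharper: you correctly isolate strict convexity of $u\mapsto\lambda\|u\|^2_{\mathrm{L}^2(J;\mathbb{U})}$ (plus convexity of the terminal term composed with the affine map $u\mapsto\mathscr{R}(T)\zeta+\mathrm{L}_Tu$) as the source of uniqueness of $u^0$, with $w^0$ then determined by the control; the paper's appeal to mere convexity of $\mathscr{F}$, linearity of the constraint and convexity of $\mathcal{U}_{\mathrm{ad}}$ is, as stated, not quite sufficient on its own. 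What the paper's heavier approach buys is the strong convergence $w^{n_j}\to w^0$ in $C(J;\mathbb{W})$, which parallels the compactness machinery used later for the semilinear problem, but it is not needed for this linear-quadratic result.
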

	\begin{proof}
		Denote  $L := 	\min\limits_{ (w,u) \in \mathcal{A}_{\mathrm{ad}}}  \mathscr{F}(w,u) =\inf \limits _{u \in \mathcal{U}_{\mathrm{ad}}}\mathscr{F}(w,u).$
		Since, $0\leq L < +\infty$, we can find a minimizing sequence $\{u^n\}_{n=1}^{\infty} \subset \mathcal{U}_{\mathrm{ad}}$ satisfying $\lim\limits_{n\to\infty}\mathscr{F}(w^n,u^n) = L,$ where $(w^n, u^n)\in\mathcal{A}_{\mathrm{ad}}$ for each $n\in\mathbb{N}$. Evidently, the function  $w^n(\cdot)$ is given by
		\begin{align}\label{eq3.6}
			w^n(t)= \mathscr{R}(t)\zeta+\int^{t}_{0}\mathscr{R}(t-s)\mathrm{B}u^n(s)\mathrm{d}s,\ t\in J.
		\end{align}
		Since  $0\in\mathcal{U}_{\mathrm{ad}}$, without loss of generality, we may assume that $\mathscr{F}(w^n,u^n) \leq \mathscr{F}(w,0)$, where $(w,0)\in\mathcal{A}_{\mathrm{ad}}$, implies that
		\begin{align*}
			\left\|w^n(T)-\zeta_1\right\|^{2}_{\mathbb{W}}+\lambda\int^{T}_{0}\left\|u^n(t)\right\|^{2}_{\mathbb{U}}\mathrm{d}t\leq \left\|w(T)-\zeta_1\right\|^{2}_{\mathbb{W}}\leq 2\left(\|w(T)\|_{\mathbb{W}}^2+\|\zeta_1\|_{\mathbb{W}}^2\right)<+\infty.
		\end{align*}
		The above estimate ensures that, there exists a $\widetilde{L}>0$  large enough such that
		\begin{align}\label{e3.7}\int_0^T \|u^n(t)\|^2_{\mathbb{U}} \mathrm{d} t \leq  \widetilde{L} < +\infty 
			.\end{align} 
		Using \eqref{eq3.6}, we estimate
		\begin{align*}
			\|w^n(t)\|_{\mathbb{W}}&\leq\|\mathscr{R}(t)\zeta\|_{\mathbb{W}}+\int_0^t\|\mathscr{R}(t-s)\mathrm{B}u^n(s)\|_{\mathbb{W}}\mathrm{d}s \nonumber\\&\leq\|\mathscr{R}(t)\|_{\mathcal{L}(\mathbb{W})}\|\zeta\|_{\mathbb{W}}+\int_0^t\|\mathscr{R}(t-s)\|_{\mathcal{L}(\mathbb{W})}\|\mathrm{B}\|_{\mathcal{L}(\mathbb{U};\mathbb{W})}\|u^n(s)\|_{\mathbb{U}}\mathrm{d}s\nonumber\\&\leq K\|\zeta\|_{\mathbb{W}}+KMt^{1/2}\left(\int_0^t\|u^n(s)\|_{\mathbb{U}}^2\mathrm{d}s\right)^{1/2}\nonumber\\&\leq K\|\zeta\|_{\mathbb{W}}+KMt^{1/2}\widetilde{L}^{1/2}<+\infty,
		\end{align*}
		for all $t\in J$. Since  $\mathrm{L}^{2}(J;\mathbb{W})$ is reflexive,  by  Banach-Alaoglu theorem, there exists a subsequence $\{w^{n_j}\}_{j=1}^{\infty}$  such that $
			w^{n_j}\xrightharpoonup{w}w^0\ \text{ in }\mathrm{L}^{2}(J;\mathbb{W})$ as $ j\to\infty. $
		It is clear from the relation \eqref{e3.7} the sequence $\{u^n\}_{n=1}^{\infty}$ is uniformly bounded in the space $\mathrm{L}^2(J;\mathbb{U})$. Again by Banach-Alaoglu theorem, we can extract a subsequence $\{u^{n_j}\}_{j=1}^{\infty}$ such that 
	$u^{n_j}\xrightharpoonup{w}u^0\ \text{ in }\mathrm{L}^2(J;\mathbb{U})$ as $ j\to\infty. 
	$
		We also have
		\begin{align}\label{e3.8}
			\mathrm{B}	u^{n_j}\xrightharpoonup{w}\mathrm{B}u^0\ \text{ in }\mathrm{L}^2(J;\mathbb{W})\ \text{ as } \ j\to\infty. 
		\end{align}
		The weak convergence given in \eqref{e3.8} and the compactness of the operator (Lemma \ref{lem2.12}) $$(Qg)(\cdot)= \int_{0}^{\cdot}\mathscr{R}(\cdot-s)g(s)\mathrm{d}s : \mathrm{L}^{2}(J;\mathbb{W}) \rightarrow \mathrm{C}(J;\mathbb{W})$$
		it follows that 
		\begin{align*}
			&	\left\|\int_0^t\mathscr{R}(t-s)\mathrm{B}u^{n_j}(s)\mathrm{d}s-\int_0^t\mathscr{R}(t-s)\mathrm{B}u^0(s)\mathrm{d}s\right\|_{\mathbb{W}}\to 0\ \text{ as }\ j\to\infty, 
		\end{align*}
		for all $t\in J$. Moreover, for all $t\in J, $ we obtain
		\begin{align*}
			\|w^{n_j}(t)-w^0(t)\|_{\mathbb{W}}=	\left\|\int_0^t\mathscr{R}(t-s)\mathrm{B}u^{n_j}(s)\mathrm{d}s-\int_0^t\mathscr{R}(t-s)\mathrm{B}u^0(s)\mathrm{d}s\right\|_{\mathbb{W}}\nonumber\to 0
		\end{align*}
	as $j\to\infty$, 	where 
		\begin{align*}
			w^{0}(t)=\mathscr{R}(t)\zeta+\int^{t}_{0}\mathscr{R}(t-s)\mathrm{B}u^0(s)\mathrm{d}s, \ t\in J.
		\end{align*}
		It is clear from the above formulation that $w^0(\cdot)$ is the unique mild solution of the system \eqref{LEq1} with the control $u^0$. The continuity  of $w^{n_j}(\cdot)$ in $\mathbb{W}$ implies that  $w^{n_j}\to w^0$ in $\mathrm{C}(J;\mathbb{W})$ as $j\to\infty$. Since $w^0(\cdot)$ is a unique mild solution of \eqref{LEq1},  the whole sequence $\{w^n\}_{n=1}^{\infty}$ converges to  $w^0$. The fact $u^0\in\mathcal{U}_{\mathrm{ad}}$ immediately infer that $(w^0,u^0)\in\mathcal{A}_{\mathrm{ad}}$.
		
		Finally, we verify that  $(w^0,u^0)$ is a minimizer, that is, \emph{$L=\mathscr{F}(w^0,u^0)$}. Note that the cost functional $\mathscr{F}(\cdot,\cdot)$ given in \eqref{CF} is convex and continuous on $\mathrm{L}^2(J;\mathbb{W}) \times \mathrm{L}^2(J;\mathbb{U})$ (cf. Proposition III.1.6 and III.1.10,  \cite{CP1983}). This ensures that the functional  $\mathscr{F}(\cdot,\cdot)$ is sequentially weakly lower semi-continuous (cf. Proposition II.4.5, \cite{CP1983}). That is, for a sequence 
		$$(w^n,u^n)\xrightharpoonup{w}(w^0,u^0)\ \text{ in }\ \mathrm{L}^2(J;\mathbb{W}) \times  \mathrm{L}^2(J;\mathbb{U}),$$
		we have 
		\begin{align*}
			\mathscr{F}(w^0,u^0) \leq  \liminf \limits _{n\rightarrow \infty} \mathscr{F}(w^n,u^n).
		\end{align*}
		Consequently, we obtain 
		\begin{align*}L \leq \mathscr{F}(w^0,u^0) \leq  \liminf \limits _{n\rightarrow \infty} \mathscr{F}(w^n,u^n)=  \lim \limits _{n\rightarrow \infty} \mathscr{F}(w^n,u^n) = L,\end{align*}
		and thus $(w^0,u^0)$ is a minimizer of the problem \eqref{opt}.
		
		The uniqueness of the optimal pair $(w^0,u^0)$ is followed by the facts the cost functional defined in \eqref{CF} is convex, the constraint \eqref{LEq1} is linear and $\mathcal{U}_{\mathrm{ad}}=\mathrm{L}^2(J;\mathbb{U})$ is convex. Hence proof of Theorem 3.1 is complete.
	\end{proof}
	Before discussing the explicit expression for the optimal control ${u}$, let us first investigate the differentiability of the mapping $w\mapsto\frac{1}{2}\|w\|_{\mathbb{W}}^2$. Consider the function $k: \mathbb{W}\to\mathbb{R}$ defined as $k(w)=\frac{1}{2}\|w\|_{\mathbb{W}}^2$.  Since $\mathbb{W}$ is a separable reflexive Banach space with a uniformly convex dual $\mathbb{W}^*$, we can rely on the fact (8.12) in \cite{SPV2001} to assert that the norm $\|\cdot\|_{\mathbb{W}}$ is G$\hat{a}$teaux differentiable. Moreover, the G$\hat{a}$teaux derivative of the function $k(w)=\frac{1}{2}\|w\|_{\mathbb{W}}^2$ is the duality map, that is,
	$$\langle\partial_w k(w),z\rangle=\frac{1}{2}\frac{\mathrm{d}}{\mathrm{d}\varepsilon}\|w+\varepsilon z\|_{\mathbb{W}}^2\Big|_{\varepsilon=0}=\langle\mathscr{J}[w],z\rangle,$$ for $z\in\mathbb{W}$, where $\partial_w k(w)$ denotes the G\^ateaux derivative of $k$ at $w\in\mathbb{W}$.  Moreover, $\mathbb{U}$ is a separable Hilbert space whose dual space is same as $\mathbb{U}$, then the space $\mathbb{U}$ possesses a Fr\'echet differentiable norm (see Theorem 8.24 in \cite{SPV2001}). The below lemma provides the expression for the optimal control $u$.
	\begin{lem}\label{lem3.2}
		The optimal control ${u}$, satisfying \eqref{LEq1} and minimizing the cost functional \eqref{CF}, can be expressed as
		\begin{align*}
			{u}(t)=\mathrm{B}^{*}\mathscr{R}(T-t)^*\mathscr{J}\left[\mathrm{R}(\lambda,\Psi_0^{T})g(w(\cdot))\right],\ t\in J,
		\end{align*}
		where
		\begin{align*}
			g(w(\cdot))=\zeta_1-\mathscr{R}(T)\zeta.
		\end{align*}
	\end{lem}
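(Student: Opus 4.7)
The plan is to derive the optimal control via the first-order necessary optimality condition (Euler--Lagrange equation) and then identify the resulting state equation with the one solved in Lemma \ref{lem2.16}.

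First, I would exploit the linear-affine dependence of the mild solution on the control. Fix any $v\in\mathrm{L}^2(J;\mathbb{U})$ and, for $\varepsilon\in\mathbb{R}$, let $w_{\varepsilon}(\cdot)$ denote the mild solution of \eqref{LEq1} corresponding to the control $u+\varepsilon v$. Then
\begin{align*}
w_{\varepsilon}(t)=w(t)+\varepsilon\int_{0}^{t}\mathscr{R}(t-s)\mathrm{B}v(s)\mathrm{d}s,\qquad t\in J,
\end{align*}
so in particular $w_{\varepsilon}(T)=w(T)+\varepsilon\,\mathrm{L}_T v$. Using the G\^ateaux differentiability of $z\mapsto\tfrac12\|z\|_{\mathbb{W}}^{2}$ (whose derivative is the duality map $\mathscr{J}$, as recalled in the excerpt) together with the Hilbertian structure on $\mathbb{U}$, I compute
\begin{align*}
0=\frac{\mathrm{d}}{\mathrm{d}\varepsilon}\mathscr{F}(w_{\varepsilon},u+\varepsilon v)\Big|_{\varepsilon=0}
=2\bigl\langle \mathscr{J}[w(T)-\zeta_1],\,\mathrm{L}_T v\bigr\rangle
+2\lambda\int_{0}^{T}\langle u(t),v(t)\rangle_{\mathbb{U}}\,\mathrm{d}t.
\end{align*}
Rewriting the first term via the adjoint $\mathrm{L}_T^{*}$ and using that $v$ is arbitrary, I would obtain the pointwise identity
\begin{align*}
\lambda\,u(t)=-\mathrm{B}^{*}\mathscr{R}(T-t)^{*}\mathscr{J}[w(T)-\zeta_1]
=\mathrm{B}^{*}\mathscr{R}(T-t)^{*}\mathscr{J}[\zeta_1-w(T)],
\end{align*}
where I used the odd-homogeneity $\mathscr{J}[-z]=-\mathscr{J}[z]$.

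Next I would close the system. Substituting the expression above into the mild formula for $w(T)=\mathscr{R}(T)\zeta+\mathrm{L}_T u$ and setting $z:=\zeta_1-w(T)$, the relation $\mathrm{L}_T\mathrm{L}_T^{*}=\Psi_0^{T}$ yields
\begin{align*}
\lambda z+\Psi_0^{T}\mathscr{J}[z]=\lambda\bigl(\zeta_1-\mathscr{R}(T)\zeta\bigr)=\lambda\,g(w(\cdot)).
\end{align*}
By Lemma \ref{lem2.16} this equation has the unique solution $z=\lambda\,\mathrm{R}(\lambda,\Psi_0^{T})\,g(w(\cdot))$ in the notation of the paper, and the positive homogeneity $\mathscr{J}[\lambda z]=\lambda\mathscr{J}[z]$ gives $\mathscr{J}[z]=\lambda\mathscr{J}\bigl[\mathrm{R}(\lambda,\Psi_0^{T})g(w(\cdot))\bigr]$. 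Inserting this into the formula for $u$, the factor $1/\lambda$ cancels and yields exactly the claimed expression.

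The main obstacle I anticipate is a technical one rather than a conceptual one: one must justify passing the derivative inside the squared norm in the Banach space $\mathbb{W}$, which requires the G\^ateaux differentiability of $\|\cdot\|_{\mathbb{W}}^{2}$ with derivative $\mathscr{J}$. This is exactly the fact the authors recall just before stating the lemma (namely that, since $\mathbb{W}$ is separable reflexive with uniformly convex dual, the norm is G\^ateaux differentiable and its derivative coincides with the duality map). A small additional care is needed in interpreting the symbol $\mathrm{R}(\lambda,\Psi_0^{T})$ in the last step, since the operator $\lambda\mathrm{I}+\Psi_0^{T}\mathscr{J}$ is \emph{nonlinear} on a non-Hilbert $\mathbb{W}$; this is why one must invoke Lemma \ref{lem2.16} to know that the equation for $z$ has a (unique) solution and that the formal manipulation above is justified.
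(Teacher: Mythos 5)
Your proposal is correct and follows essentially the route the paper intends: the paper gives no proof itself but refers to the standard argument of Lemma 3.4 in \cite{MCRF2021}, which is exactly your scheme of differentiating the cost functional via the G\^ateaux derivative $\mathscr{J}$ of $\tfrac12\|\cdot\|_{\mathbb{W}}^2$, identifying $\lambda u(t)=\mathrm{B}^{*}\mathscr{R}(T-t)^{*}\mathscr{J}[\zeta_1-w(T)]$ through $\mathrm{L}_T^{*}$, and closing the loop with $\mathrm{L}_T\mathrm{L}_T^{*}=\Psi_0^{T}$ so that Lemma \ref{lem2.16} applied to $z=\zeta_1-w(T)$ with $y=g(w(\cdot))=\zeta_1-\mathscr{R}(T)\zeta$ gives the stated formula after cancelling $\lambda$ by the homogeneity of $\mathscr{J}$. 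Your attention to the nonlinearity of $\lambda\mathrm{I}+\Psi_0^{T}\mathscr{J}$ and to the differentiability facts recalled before the lemma addresses the only delicate points, so no gap remains.
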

	A proof of the lemma can be easily obtained by adapting the technique used in the proof of Lemma 3.4 of \cite{MCRF2021}. Subsequently, we establish the approximate controllability result of the linear control system \eqref{3.2} by using the above control.
	\begin{lem}\label{lem3.3}
		The linear control system \eqref{LEq1} is approximately controllable on $J$ if and only if Assumption \textit{(H0)} is satisfied.
	\end{lem}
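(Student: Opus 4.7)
The plan is to prove both directions of the equivalence by combining two ingredients stated earlier in the paper: the explicit form of the optimal control from Lemma \ref{lem3.2}, and the characterizing identity $\lambda z_\lambda + \Psi_{0}^{T}\mathscr{J}[z_\lambda] = \lambda y$ for $z_\lambda = z_\lambda(y) = \lambda\mathrm{R}(\lambda,\Psi_{0}^{T})(y)$ from Lemma \ref{lem2.16}, together with the bound $\|z_\lambda\|_{\mathbb{W}} \le \|y\|_{\mathbb{W}}$ established there. The \emph{if} direction will be constructive, while the \emph{only if} direction will require a variational estimate obtained by pairing that identity with $\mathscr{J}[z_\lambda]$.

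For the \emph{if} direction, I would fix $\zeta_1 \in \mathbb{W}$, set $g := \zeta_1 - \mathscr{R}(T)\zeta$, and substitute the control
\[u(t) = \mathrm{B}^{*}\mathscr{R}(T-t)^*\mathscr{J}\bigl[\mathrm{R}(\lambda,\Psi_0^{T})(g)\bigr]\]
from Lemma \ref{lem3.2} into the mild solution formula, yielding $w(T) = \mathscr{R}(T)\zeta + \Psi_0^T\mathscr{J}\bigl[\mathrm{R}(\lambda,\Psi_0^{T})(g)\bigr]$. Writing $v := \mathrm{R}(\lambda,\Psi_0^{T})(g)$, the defining relation $\lambda v + \Psi_0^T\mathscr{J}[v] = g$ gives $\Psi_0^T\mathscr{J}[v] = g - \lambda v$, whence $w(T)-\zeta_1 = -\lambda v = -z_\lambda(g)$. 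Assumption \textit{(H0)} then forces $\|w(T)-\zeta_1\|_{\mathbb{W}} \to 0$ as $\lambda \downarrow 0$, producing a control that meets any prescribed tolerance.

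For the converse, I would fix an arbitrary $y \in \mathbb{W}$ and, by approximate controllability applied with $\zeta = 0$ and $\zeta_1 = y$, select $u_\epsilon \in \mathcal{U}_{\mathrm{ad}}$ satisfying $\|\mathrm{L}_T u_\epsilon - y\|_{\mathbb{W}} < \epsilon$ for each $\epsilon > 0$. Pairing $\lambda z_\lambda + \Psi_0^T\mathscr{J}[z_\lambda] = \lambda y$ with $\mathscr{J}[z_\lambda]$ and using both $\langle\Psi_0^T\mathscr{J}[z_\lambda],\mathscr{J}[z_\lambda]\rangle = \|\mathrm{L}_T^*\mathscr{J}[z_\lambda]\|_{\mathrm{L}^2(J;\mathbb{U})}^2$ and $\|\mathscr{J}[z_\lambda]\|_{\mathbb{W}^*} = \|z_\lambda\|_{\mathbb{W}}$ yields
\[\lambda\|z_\lambda\|_{\mathbb{W}}^2 + \|\mathrm{L}_T^*\mathscr{J}[z_\lambda]\|_{\mathrm{L}^2(J;\mathbb{U})}^2 = \lambda\langle y,\mathscr{J}[z_\lambda]\rangle.\]
Splitting the right-hand pairing as $\langle y,\mathscr{J}[z_\lambda]\rangle = \langle u_\epsilon, \mathrm{L}_T^*\mathscr{J}[z_\lambda]\rangle_{\mathrm{L}^2(J;\mathbb{U})} + \langle y - \mathrm{L}_T u_\epsilon,\mathscr{J}[z_\lambda]\rangle$, controlling the first summand via Young's inequality (with weight chosen to absorb the $\|\mathrm{L}_T^*\mathscr{J}[z_\lambda]\|^2$ term on the left) and the second by $\epsilon\|y\|_{\mathbb{W}}$ through $\|z_\lambda\|_{\mathbb{W}} \le \|y\|_{\mathbb{W}}$, I expect to reach
\[\|z_\lambda\|_{\mathbb{W}}^2 \le \tfrac{\lambda}{2}\|u_\epsilon\|_{\mathrm{L}^2(J;\mathbb{U})}^2 + \epsilon\|y\|_{\mathbb{W}}.\]
Sending $\lambda \downarrow 0$ with $u_\epsilon$ held fixed and then $\epsilon \downarrow 0$ forces $\|z_\lambda\|_{\mathbb{W}} \to 0$, which is precisely Assumption \textit{(H0)}.

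The hard part will be the \emph{only if} direction: because $\mathrm{R}(\lambda,\Psi_0^T)$ is genuinely nonlinear in this Banach-space setting (the Hilbert space case works with the linear resolvent $(\lambda\mathrm{I}+\Psi_0^T)^{-1}$ and permits direct spectral manipulation), no such resolvent identity is available. The variational estimate above substitutes for it, depending crucially on the positive homogeneity $\mathscr{J}[\lambda z] = \lambda\mathscr{J}[z]$, the norm-preservation $\|\mathscr{J}[z]\|_{\mathbb{W}^*} = \|z\|_{\mathbb{W}}$, and the factorization $\Psi_0^T = \mathrm{L}_T\mathrm{L}_T^*$. Care is also required to track the two distinct pairings at play, namely the one in $\mathbb{W}$ and the one in $\mathrm{L}^2(J;\mathbb{U})$.
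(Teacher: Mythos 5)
Your argument is correct, and it is worth noting that the paper itself gives no proof of Lemma \ref{lem3.3}: it merely defers to an adaptation of Theorem 3.2 of the cited reference \cite{NAHS2021}. Your \emph{if} direction coincides with the standard argument behind that citation: plugging the feedback control $u(t)=\mathrm{B}^{*}\mathscr{R}(T-t)^{*}\mathscr{J}\left[\mathrm{R}(\lambda,\Psi_{0}^{T})(\zeta_1-\mathscr{R}(T)\zeta)\right]$ into the mild solution and using $\lambda v+\Psi_{0}^{T}\mathscr{J}[v]=g$ gives exactly $w(T)-\zeta_1=-\lambda\mathrm{R}(\lambda,\Psi_{0}^{T})(g)$, so \textit{(H0)} yields approximate controllability; this is the same computation that reappears in \eqref{4.35}. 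Where you genuinely diverge is the \emph{only if} direction. The route suggested by the paper (via Remark \ref{rem3.4} and Theorem 2.3 of \cite{SIAM2003}) passes through the positivity of $\Psi_{0}^{T}$: approximate controllability $\Leftrightarrow$ $\mathrm{B}^{*}\mathscr{R}(T-t)^{*}w^{*}=0\Rightarrow w^{*}=0$ $\Leftrightarrow$ $\Psi_{0}^{T}$ positive, and then Mahmudov's theorem (a weak-compactness/uniform-convexity argument) converts positivity into \textit{(H0)}. You instead exploit density of $\operatorname{Range}(\mathrm{L}_T)$ quantitatively: pairing $\lambda z_\lambda+\Psi_{0}^{T}\mathscr{J}[z_\lambda]=\lambda y$ with $\mathscr{J}[z_\lambda]$, using $\langle\Psi_{0}^{T}\mathscr{J}[z_\lambda],\mathscr{J}[z_\lambda]\rangle=\|\mathrm{L}_T^{*}\mathscr{J}[z_\lambda]\|_{\mathrm{L}^2(J;\mathbb{U})}^{2}$, $\|\mathscr{J}[z_\lambda]\|_{\mathbb{W}^*}=\|z_\lambda\|_{\mathbb{W}}\le\|y\|_{\mathbb{W}}$ from Lemma \ref{lem2.16}, and Young's inequality to absorb $\|\mathrm{L}_T^{*}\mathscr{J}[z_\lambda]\|^{2}$, which gives
\begin{align*}
\|z_\lambda(y)\|_{\mathbb{W}}^{2}\le \tfrac{\lambda}{2}\|u_\epsilon\|_{\mathrm{L}^2(J;\mathbb{U})}^{2}+\epsilon\|y\|_{\mathbb{W}},
\end{align*}
and letting $\lambda\downarrow0$, then $\epsilon\downarrow0$, yields \textit{(H0)} directly. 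This buys you a self-contained, purely variational proof that avoids invoking the positivity equivalence and the attendant weak-convergence argument, at the modest cost of being a pointwise-in-$y$ estimate rather than a structural characterization of $\Psi_{0}^{T}$; both ingredients you rely on ($\Psi_{0}^{T}=\mathrm{L}_T\mathrm{L}_T^{*}$, single-valuedness of $\mathscr{J}$, Lemma \ref{lem2.16}) are already available in the paper, so the argument stands as written.
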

	A proof of the above lemma is a straightforward adaptation of Theorem 3.2, \cite{NAHS2021}.
	
	\begin{rem}\label{rem3.4}
		If the operator $\Psi_{0}^{T}$ is positive, then Assumption (\textit{H0}) holds and vice versa (cf. Theorem 2.3, \cite{SIAM2003}). The positivity of $\Psi_{0}^{T}$ is equivalent to $$ \langle w^*, \Psi_{0}^{T}w^*\rangle=0\Rightarrow w^*=0.$$ Further, we have 
		\begin{align*}
			\langle w^*, \Psi_{0}^{T}w^*\rangle =\int_{0}^T\left\|\mathrm{B}^*\mathscr{R}(T-t)^*w^*\right\|_{\mathbb{U}}^2\mathrm{d}t.
		\end{align*}
		The above fact and Lemma \ref{lem3.3} ensure that the approximate controllability of the linear system \eqref{LEq1} is analogous to the condition $$\mathrm{B}^*\mathscr{R}(T-t)^*w^*=0,\ 0\le t\le T \Rightarrow w^*=0.$$ 
	\end{rem}
	\begin{rem}
		The equivalent conditions of the approximate controllability provided in the above remark also hold for general Banach space $\mathbb{W}$, that is, the linear system  \eqref{LEq1} is approximately controllable on $J$ if and only if 
		$$\mathrm{B}^*\mathscr{R}(T-t)^*w^*=0,\ 0\le t\le T \Rightarrow w^*=0, \ \mbox{for}\ w\in\mathbb{W}^*.$$
		This is followed by the fact that $Range(\mathrm{L}_T)^{\perp}=Ker(\mathrm{L}_T^*)$.
	\end{rem}
	\section{Semilinear Control problem}\label{SLCS}\setcounter{equation}{0} 
	The objective of this section is to study the approximate controllability of the neutral integro-differential equation given in \eqref{SEq}.
	To achieve this, we first establish the existence of a mild solution for the integro-differential system \eqref{SEq} associated with the control
	\begin{align}\label{Cot}
		u_{\lambda}(t)=u_{\lambda}(t;w)=\mathrm{B}^{*}\mathscr{R}(T-t)^*\mathscr{J}\left[\mathrm{R}(\lambda,\Psi_{0}^{T})l(w(\cdot))\right],\ t\in J,
	\end{align}
	where 
	\begin{align}\label{e4.2}
		l(w(\cdot))=\zeta_1-\mathscr{R}(T)\psi(0)-\int_{0}^{T}\mathscr{R}(t-s)\left[f'_1(s)+f_2(s)\right]\mathrm{d}s-\int_{0}^{T}\mathscr{R}(t-s)f(s,\tilde{w}_s)\mathrm{d}s,
	\end{align}
	with $\lambda>0$ and $\zeta_1\in\mathbb{W}$ and $\tilde{w}:(-\infty,T]\to\mathbb{W}$ such that $\tilde{w}(t)=\psi(t), t\in(-\infty,0)$ and $\tilde{w}(t)=w(t), t\in J$.
	\begin{theorem}\label{thm4.1}
		 Assume (H1)-(H2)  and $f_1\in\mathrm{W}^{1,1}(J;\mathbb{W}), f_2\in\mathrm{L}^1(J;\mathbb{W})$. Then for every $\lambda>0$ and any $\zeta_1\in\mathbb{W}$, there exists at least one mild solution for the system \eqref{SEq} under the control \eqref{Cot}.
	\end{theorem}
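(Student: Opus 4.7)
The plan is to apply Schauder's fixed point theorem to the operator $\Phi_\lambda : C(J;\mathbb{W}) \to C(J;\mathbb{W})$ defined by
\begin{align*}
(\Phi_\lambda w)(t) &= \mathscr{R}(t)\psi(0)+\int_0^t \mathscr{R}(t-s)\bigl[\mathrm{B}u_\lambda(s;w)+f(s,\tilde w_s)\bigr]\mathrm{d}s\\
&\quad+\int_0^t \mathscr{R}(t-s)\bigl[f_1'(s)+f_2(s)\bigr]\mathrm{d}s,
\end{align*}
where $\tilde w$ is the extension of $w$ to $(-\infty,T]$ defined by $\psi$ on $(-\infty,0]$, and $u_\lambda(\cdot;w)$ is the control specified in \eqref{Cot}. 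By construction, fixed points of $\Phi_\lambda$ coincide with mild solutions of \eqref{SEq} driven by $u_\lambda$ in the sense of Definition \ref{Def2.19}.

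First I would verify the self-map property on a suitable closed ball $B_r \subset C(J;\mathbb{W})$. By Lemma \ref{lem2.16}, $\|\mathrm{R}(\lambda,\Psi_0^T)l(w(\cdot))\|_{\mathbb{W}}\le \lambda^{-1}\|l(w(\cdot))\|_{\mathbb{W}}$; combined with (H2) (so that $\|f(s,\tilde w_s)\|_{\mathbb{W}}\le \gamma(s)\in L^1$), the hypothesis $f_1'+f_2\in \mathrm{L}^1(J;\mathbb{W})$, and the uniform bound $\|\mathscr{R}(t)\|_{\mathcal{L}(\mathbb{W})}\le K$ on $J$, this yields a $w$-independent bound on $l(w(\cdot))$, and hence on $\|u_\lambda(\cdot;w)\|_{\mathrm{L}^2(J;\mathbb{U})}$. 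A direct estimate of the integral formula then produces a uniform bound on $\|\Phi_\lambda w\|_{C(J;\mathbb{W})}$, so that $\Phi_\lambda(B_r)\subset B_r$ for $r$ chosen large enough.

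Next I would prove continuity of $\Phi_\lambda$ on $B_r$. If $w^n\to w$ in $C(J;\mathbb{W})$, then axiom (A2) of the phase space yields $\tilde w^n_s \to \tilde w_s$ in $\mathfrak{B}$ uniformly in $s\in J$; (H2) then gives $f(s,\tilde w^n_s)\to f(s,\tilde w_s)$ a.e., dominated by $\gamma(s)$, so by dominated convergence $f(\cdot,\tilde w^n_\cdot)\to f(\cdot,\tilde w_\cdot)$ in $\mathrm{L}^1(J;\mathbb{W})$. Consequently $l(w^n(\cdot))\to l(w(\cdot))$ in $\mathbb{W}$. Lemma \ref{lem2.17} yields continuity of $\mathrm{R}(\lambda,\Psi_0^T)$ on bounded sets, and the demicontinuity of the duality map $\mathscr{J}$ provides weak convergence of $\mathscr{J}[\mathrm{R}(\lambda,\Psi_0^T)l(w^n(\cdot))]$ in $\mathbb{W}^*$. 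Hence $\mathrm{B}u_\lambda(\cdot;w^n)\xrightharpoonup{w}\mathrm{B}u_\lambda(\cdot;w)$ in $\mathrm{L}^2(J;\mathbb{W})$, and Corollary \ref{cor1} upgrades this to strong convergence of the corresponding convolution in $C(J;\mathbb{W})$; combining with the strong convergence of the $f$-contribution gives $\Phi_\lambda w^n\to \Phi_\lambda w$ in $C(J;\mathbb{W})$.

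Finally, for relative compactness of $\Phi_\lambda(B_r)$, (H1) and Lemma \ref{lem2.6} make $\mathscr{R}(t)$ compact for $t>0$. The family $\{\mathrm{B}u_\lambda(\cdot;w)+f(\cdot,\tilde w_\cdot):w\in B_r\}$ is uniformly bounded in $\mathrm{L}^2(J;\mathbb{W})$, so Lemma \ref{lem2.12} forces its image under the convolution with $\mathscr{R}$ to be relatively compact in $C(J;\mathbb{W})$; the deterministic term containing $f_1'+f_2$ is a single continuous function and does not affect compactness. Schauder's fixed point theorem then furnishes a fixed point of $\Phi_\lambda$, which is the desired mild solution. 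The main obstacle is the continuity step: $u_\lambda(\cdot;w)$ depends on $w$ through the nonlinear composition $\mathscr{J}\circ\mathrm{R}(\lambda,\Psi_0^T)$ for which only demicontinuity of $\mathscr{J}$ is available, and the resulting weak convergence of the controls must be converted into strong convergence of the state trajectories; this passage is precisely what the compactness of the convolution operator in Corollary \ref{cor1} is designed to supply.
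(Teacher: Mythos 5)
Your overall architecture is the paper's: the same fixed-point operator, a self-map bound on a ball driven by Lemma \ref{lem2.16} and (H2), and Schauder at the end. Your continuity step, however, takes a genuinely different (and legitimate) route. After obtaining $l(w^n(\cdot))\to l(w(\cdot))$, continuity of $\mathrm{R}(\lambda,\Psi_0^T)$ from Lemma \ref{lem2.17}, and weak convergence of $\mathscr{J}[\mathrm{R}(\lambda,\Psi_0^T)l(w^n(\cdot))]$ in $\mathbb{W}^*$, you keep only the resulting \emph{weak} convergence of $\mathrm{B}u_\lambda(\cdot;w^n)$ and let the compactness of the convolution (Corollary \ref{cor1}) upgrade it to strong convergence of the trajectories; the paper instead uses (H1) and Lemma \ref{lem2.6} to get compactness of $\mathscr{R}(T-t)^*$ and thereby strong (indeed uniform in $t$) convergence of the controls themselves, followed by dominated convergence. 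Both work; yours trades compactness of the adjoint resolvent for compactness of the convolution operator. Two small inaccuracies: the uniform convergence $\tilde w^n_s\to\tilde w_s$ in $\mathfrak{B}$ follows from axiom (A1)(iii) applied to $\tilde w^n-\tilde w$ (whose history at $0$ vanishes), not from (A2), which only concerns continuity in $t$ of $s\mapsto z_s$ for a fixed $z$; and you should work on $\{w\in C(J;\mathbb{W}):w(0)=\psi(0)\}$ rather than all of $C(J;\mathbb{W})$, since otherwise $\tilde w_0$ need not lie in $\mathfrak{B}$ and $f(s,\tilde w_s)$ is not well defined through the phase-space axioms.

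The genuine gap is in your compactness step. Assumption (H2) only gives $\|f(t,\phi)\|_{\mathbb{W}}\le\gamma(t)$ with $\gamma\in\mathrm{L}^1(J;\mathbb{R}^+)$, so the family $\{f(\cdot,\tilde w_\cdot):w\in B_r\}$ is merely integrably bounded; it need not lie in, let alone be bounded in, $\mathrm{L}^2(J;\mathbb{W})$. Consequently Lemma \ref{lem2.12}, which is a compactness statement for the convolution acting on $\mathrm{L}^2$, cannot be applied to $\mathrm{B}u_\lambda(\cdot;w)+f(\cdot,\tilde w_\cdot)$ as written, and this is precisely the step carrying the nonlinearity. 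The repair stays inside the paper's toolkit: apply Lemma \ref{lem5.3.2} to the integrably bounded sequences $g_n=\mathrm{B}u_\lambda(\cdot;w^n)+f(\cdot,\tilde w^n_\cdot)+f_1'+f_2$ (relative compactness in $C(J;\mathbb{W})$ is sequential, so this suffices), or split off the control term, which is uniformly bounded and hence covered by Lemma \ref{lem2.12}, and treat the $f$ and $f_1'+f_2$ contributions by Lemma \ref{lem5.3.2}. The paper itself argues differently again: it proves equicontinuity of the image and pointwise relative compactness via the $\mathscr{R}(\eta)$-approximation together with Lemma \ref{lem2.21}, then invokes Arzel\'a--Ascoli. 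With this correction your argument closes and Schauder's theorem yields the fixed point as claimed.
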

	\begin{proof}
		Consider a set $Z_{\psi}:=\{w\in C(J;\mathbb{W}) : w(0)=\psi(0)\}$ equipped with the norm  $\left\|\cdot\right\|_{C(J;\mathbb{W})}$. For each $r>0$, let us define $W_r=\{w\in Z_{\psi}: \left\|w\right\|_{C(J;\mathbb{W})}\le r\}$.
		Next, for any $\lambda>0$, define an operator $\mathcal{T}_{\lambda}:Z_{\psi}\to Z_{\psi}$ by $(\mathcal{T}_{\lambda}w)(t)=z(t)$, where 
		\begin{align*}
			z(t)=\mathscr{R}(t)\psi(0)+\int_{0}^{t}\mathscr{R}(t-s)\left[\mathrm{B}u_{\lambda}(s)+f(s,\tilde{w}_s)\right]\mathrm{d}s+\int_{0}^{t}\mathscr{R}(t-s)\left[f'_1(s)+f_2(s)\right]\mathrm{d}s, t\in J.
		\end{align*}
		Here $u_{\lambda}$ is given as in \eqref{Cot}. From the definition of $\mathcal{T}_{\lambda}$, it is evident that the system \eqref{SEq} possesses a mild solution, if the operator $ \mathcal{T}_{\lambda}$ admits a fixed point. The proof of the existence of a fixed point for the operator $\mathcal{T}_{\lambda}$ is divided into the following steps. 
		\vskip 0.1in 
		\noindent\textbf{Step 1: } \emph{For an arbitrary $\lambda>0$, there is a $r=r(\lambda)>0$ such that $ \mathcal{T}_{\lambda}(W_r)\subset W_r$}. Let us first calculate
		\begin{align*}
			\|u_{\lambda}(t)\|_{\mathbb{U}}&=\left\|\mathrm{B}^{*}\mathscr{R}(T-t)^*\mathscr{J}\left[\mathrm{R}(\lambda,\Psi_{0}^{T})l(w(\cdot))\right]\right\|_{\mathbb{U}}\nonumber\\&\le\frac{KM}{\lambda}\|l(w(\cdot))\|_{\mathbb{W}}\nonumber\\&\le \frac{KM}{\lambda}\left[\|\zeta_1\|_{\mathbb{W}}+K\|\psi(0)\|_{\mathbb{W}}+K\int_{0}^{T}\|f'_1(s)+f_2(s)\|_{\mathbb{W}}\mathrm{d}s+K\int_{0}^{T}\|f(s,\tilde{w}_s)\|_{\mathbb{W}}\mathrm{d}s\right]\nonumber\\&\le \frac{KM}{\lambda}\left[\|\zeta_1\|_{\mathbb{W}}+K\|\psi(0)\|_{\mathbb{W}}+K\|f'_1(s)+f_2(s)\|_{\mathrm{L}^1(J;\mathbb{W})}\mathrm{d}s+K\int_{0}^{T}\gamma(s)\mathrm{d}s\right]\nonumber\\&\le \frac{KM\tilde{M}}{\lambda},
		\end{align*}
		where $\tilde{M}=\|\zeta_1\|_{\mathbb{W}}+K\|\psi(0)\|_{\mathbb{W}}+K\|\gamma\|_{\mathrm{L}^1(J;\mathbb{R}^+)}+K\|f'_1+f_2\|_{\mathrm{L}^1(J;\mathbb{W})}$. The above inequality implies that $\|u_{\lambda}(t)\|_{\mathbb{U}}$ is bounded for all $t\in J$. We now compute
		\begin{align}\label{4.3}
			&\left\|(\mathcal{T}_{\lambda}w)(t)\right\|_\mathbb{W}\nonumber\\&=\left\|\mathscr{R}(t)\psi(0)+\int_{0}^{t}\mathscr{R}(t-s)\left[\mathrm{B}u_{\lambda}(s)+f(s,\tilde{w}_s)\right]\mathrm{d}s+\int_{0}^{t}\mathscr{R}(t-s)\left[f'_1(s)+f_2(s)\right]\mathrm{d}s\right\|_{\mathbb{W}}\nonumber\\&\le K\|\psi(0)\|_\mathbb{W}+\frac{K^2M^2\tilde{M}}{\lambda}+ K\|\gamma\|_{\mathrm{L}^1(J;\mathbb{R}^+)}+K\|f'_1+f_2\|_{\mathrm{L}^1(J;\mathbb{W})}.
		\end{align}
	From the inequality \eqref{4.3}, we see that, for each $\lambda>0$, there is a large $r=r(\lambda)>0$ so that  $ \mathcal{T}_{\lambda}(W_r)\subset W_r$.
		\vskip 0.1in 
		\noindent\textbf{Step 2: } \emph{The operator $ \mathcal{T}_{\lambda}$ is continuous}. To accomplish this goal, we consider a sequence $\{{w}^n\}^\infty_{n=1}\subseteq W_r$ such that ${w}^n\rightarrow {w}\ \mbox{in}\ W_r,$ that is,
		$$\lim\limits_{n\rightarrow \infty}\left\|w^n-w\right\|_{C(J;\mathbb{W})}=0.$$
		Using the axiom (A1), we get
		\begin{align*}
			\left\|\tilde{w_{t}^n}-\tilde{w_{t}}\right\|_{\mathfrak{B}}\le\varLambda(t)\sup\limits_{0\le s\le t}\left\|w^n(s)-w(s)\right\|_{\mathbb{W}}\nonumber\le H_1\left\|w^n-w\right\|_{C(J;\mathbb{W})}\to 0 
		\end{align*}
as $ n\to\infty$ for $ t\in J,$	where $\sup_{0\le t\le T }|\varLambda(t)|\le H_1$.	Using the above convergence, Assumption \ref{ass2.4} (\textit{H2}) and Lebesgue's dominant convergence theorem, we deduce that
		\begin{align*}
			\left\|l(w^{n}(\cdot))-l(w(\cdot))\right\|_{\mathbb{W}}&\le \left\|\int^{T}_{0}\mathscr{R}(T-s)\left[f(s, \tilde{w_{s}^n})-f(s,\tilde{w}_{s})\right]\mathrm{d}s\right\|_{\mathbb{W}}\nonumber\\&\le K\int^{T}_{0}\left\|f(s, \tilde{w_{s}^n})-f(s,\tilde{w}_{s})\right\|_{\mathbb{W}}\mathrm{d}s\to 0 \ \mbox{as}\ n\to\infty.
		\end{align*}
		From Lemma \ref{lem2.17}, it follows that the mapping $\mathrm{R}(\lambda,\Psi_{0}^{T}):\mathbb{W}\to\mathbb{W}$  is uniformly continuous on every bounded subset of $\mathbb{W}$. Thus, we have
		$$\mathrm{R}(\lambda,\Psi_{0}^{T})l(w^{n}(\cdot))\to\mathrm{R}(\lambda,\Psi_{0}^{T})l(w(\cdot))\ \mbox{in}\  \mathbb{W} \ \mbox{as}\ n\to\infty.$$ As the mapping $\mathscr{J}:\mathbb{W}\to\mathbb{W}^{*}$  is demicontinuous, it follows immediately that 
		\begin{align*}
			\mathscr{J}\left[\mathrm{R}(\lambda,\Psi_{0}^{T})l(w^{n}(\cdot))\right]\xrightharpoonup{w}\mathscr{J}\left[\mathrm{R}(\lambda,\Psi_{0}^{T})l(w(\cdot))\right]\ \text{ in }\ \mathbb{W}^{*} \ \text{as} \ n\to\infty.
		\end{align*}
		Given Assumption (\textit{H1}) and Lemma \ref{lem2.6}, it follows that the operator $\mathscr{R}(t)$ is compact for each $t>0$. Consequently, the operator $\mathscr{R}(t)^*$ is also compact for each $t>0$. Therefore, using the aforementioned weak convergence alongside the compactness of the operator $\mathscr{R}(t)^*$, one can readily arrive at
		\begin{align}\label{e4.4}
			\left\|\mathscr{R}(T-t)^*\mathscr{J}\left[\mathrm{R}(\lambda,\Psi_{0}^{T})l(w^n(\cdot))\right]\!\!-\!\mathscr{R}(T-t)^*\mathscr{J}\left[\mathrm{R}(\lambda,\Psi_{0}^{T})l(w(\cdot))\right]\right\|_{\mathbb{W^*}}\!\!\!\!\!\to 0 
		\end{align}
as $n\to\infty$	for all $t\in [0,T)$. Using \eqref{Cot} and \eqref{e4.4}, we easily get 
		\begin{align}\label{4.4.9}
			&\left\|u^{n}_{\lambda}(t)-u_{\lambda}(t)\right\|_{\mathbb{U}}\nonumber\\&=\left\|\mathrm{B}^{*}\mathscr{R}(T-t)^*\mathscr{J}\left[\mathrm{R}(\lambda,\Psi_{0}^{T})l(w^n(\cdot))\right]-\mathrm{B}^{*}\mathscr{R}(T-t)^*\mathscr{J}\left[\mathrm{R}(\lambda,\Psi_{0}^{T})l(w(\cdot))\right]\right\|_{\mathbb{U}}\nonumber\\&\leq \left\|\mathrm{B}^*\right\|_{\mathcal{L}(\mathbb{W}^{*};\mathbb{U})}\left\|\mathscr{R}(T-t)^*\mathscr{J}\left[\mathrm{R}(\lambda,\Psi_{0}^{T})l(w^n(\cdot))\right]-\mathscr{R}(T-t)^*\mathscr{J}\left[\mathrm{R}(\lambda,\Psi_{0}^{T})l(w(\cdot))\right]\right\|_{\mathbb{W^*}}\nonumber\\&\leq M\left\|\mathscr{R}(T-t)^*\mathscr{J}\left[\mathrm{R}(\lambda,\Psi_{0}^{T})l(w^n(\cdot))\right]-\mathscr{R}(T-t)^*\mathscr{J}\left[\mathrm{R}(\lambda,\Psi_{0}^{T})l(w(\cdot))\right]\right\|_{\mathbb{W^*}}\nonumber\\&\to 0 \ \text{ as } \ n\to\infty, \text{ uniforlmy for all }\ t\in [0,T).
		\end{align}
		Using  \eqref{4.4.9}, Assumption \ref{ass2.4} (\textit{H2}) and Lebesgue's dominate convergence theorem, we obtain 
		\begin{align*}
			&\left\|(\mathcal{T}_{\lambda}w^{n})(t)-(\mathcal{T}_{\lambda}w)(t)\right\|_{\mathbb{W}}\nonumber\\&\leq\left\|\int_{0}^{t}\mathscr{R}(t-s)\mathrm{B}\left[u^n_{\lambda}(s)-u_{\lambda}(s)\right]\mathrm{d}s\right\|_{\mathbb{W}}+\left\|\int_{0}^{t}\mathscr{R}(t-s)\left[f(s, \tilde{w_{s}^n})-f(s,\tilde{w}_{s})\right]\mathrm{d}s\right\|_{\mathbb{W}}\nonumber\\&\leq MKT\esssup_{t\in J}\left\|u^n_{\lambda}(t)-u_{\lambda}(t)\right\|_{\mathbb{U}} +K\int_{0}^{t}\left\|\left[f(s,\tilde{w}_{s}^{n})-f(s,\tilde{w}_{s})\right]\right\|_{\mathbb{W}}\mathrm{d}s\nonumber\\&\to 0
		\end{align*}
as $ n\to\infty$	for each $t\in J$. Hence, the map $\mathcal{T}_{\lambda}$ is continuous.
		\vskip 0.1in 
		\noindent\textbf{Step 3: } In this step, we will show that  \emph{$ \mathcal{T}_{\lambda}$ is a compact operator} for $\lambda>0$. To validate this claim, we initially prove that the image of $W_r$ under $\mathcal{T}_{\lambda}$ is equicontinuous. To proceed this, let $0\le \tau_1\le \tau_2\le T$ and any $w\in W_r$, we compute
		\begin{align*}
			&\left\|(\mathcal{T}_{\lambda}w)(\tau_2)-(\mathcal{T}_{\lambda}w)(\tau_1)\right\|_{\mathbb{W}}\nonumber\\&\leq\left\|\left[\mathscr{R}(\tau_2)-\mathscr{R}(\tau_1)\right]\psi(0)\right\|_{\mathbb{W}}+\left\|\int_{\tau_{1}}^{\tau_{2}}\mathscr{R}(\tau_2-s)\left[\mathrm{B}u_{\lambda}(s)+f(s,\tilde{w}_{s})\right]\mathrm{d}s\right\|_{\mathbb{W}}\nonumber\\&\quad+\left\|\int_{\tau_{1}}^{\tau_{2}}\mathscr{R}(\tau_2-s)\left[f'_1(s)+f_2(s)\right]\mathrm{d}s\right\|_{\mathbb{W}}+\left\|\int_{0}^{\tau_{1}}\left[\mathscr{R}(\tau_2-s)-\mathscr{R}(\tau_1-s)\right]\left[f'_1(s)+f_2(s)\right]\mathrm{d}s\right\|_{\mathbb{W}}\nonumber\\&\quad+\left\|\int_{0}^{\tau_{1}}\left[\mathscr{R}(\tau_2-s)-\mathscr{R}(\tau_1-s)\right]\left[\mathrm{B}u_{\lambda}(s)+f(s,\tilde{w}_{s})\right]\mathrm{d}s\right\|_{\mathbb{W}}\nonumber\\&\leq\left\|\left[\mathscr{R}(\tau_2)-\mathscr{R}(\tau_1)\right]\psi(0)\right\|_{\mathbb{W}}+\frac{K^2M^2\tilde{M}}{\lambda}(\tau_2-\tau_1)+K\int_{\tau_1}^{\tau_2}\!\!\!\gamma(s)\mathrm{d}s+K\int_{\tau_{1}}^{\tau_{2}}\!\!\!\left\|f'_1(s)+f_2(s)\right\|_{\mathbb{W}}\mathrm{d}s\nonumber\\&\quad+\int_{0}^{\tau_{1}}\left\|\left[\mathscr{R}(\tau_2-s)-\mathscr{R}(\tau_1-s)\right]\left[f'_1(s)+f_2(s)+\mathrm{B}u_{\lambda}(s)+f(s,\tilde{w}_{s})\right]\right\|_{\mathbb{W}}\mathrm{d}s
		\end{align*}
		If $\tau_1=0$, then by the above estimate, we obtain that 
		\begin{align*}
			\lim_{\tau_2\to 0^+}\left\|(\mathcal{T}_{\lambda}w)(\tau_2)-(\mathcal{T}_{\lambda}w)(\tau_1)\right\|_{\mathbb{W}}=0,\ \mbox{ unifromly for}\  w\in W_r.
		\end{align*}
		For given $\epsilon>0$, let us take  $\epsilon<\tau_1<T$, we have
		\begin{align}\label{e4.8}
			&\left\|(\mathcal{T}_{\lambda}w)(\tau_2)-(\mathcal{T}_{\lambda}w)(\tau_1)\right\|_{\mathbb{W}}\nonumber\\&\leq\left\|\left[\mathscr{R}(\tau_2)-\mathscr{R}(\tau_1)\right]\psi(0)\right\|_{\mathbb{W}}+\frac{K^2M^2\tilde{M}}{\lambda}(\tau_2-\tau_1)+K\int_{\tau_1}^{\tau_2}\!\!\!\gamma(s)\mathrm{d}s+K\int_{\tau_{1}}^{\tau_{2}}\!\!\!\left\|f'_1(s)+f_2(s)\right\|_{\mathbb{W}}\mathrm{d}s\nonumber\\&\quad+\int_{0}^{\tau_{1-\epsilon}}\left\|\left[\mathscr{R}(\tau_2-s)-\mathscr{R}(\tau_1-s)\right]\left[f'_1(s)+f_2(s)+\mathrm{B}u_{\lambda}(s)+f(s,\tilde{w}_{s})\right]\right\|_{\mathbb{W}}\mathrm{d}s\nonumber\\&\quad+\int_{\tau_1-\epsilon}^{\tau_{1}}\left\|\left[\mathscr{R}(\tau_2-s)-\mathscr{R}(\tau_1-s)\right]\left[f'_1(s)+f_2(s)+\mathrm{B}u_{\lambda}(s)+f(s,\tilde{w}_{s})\right]\right\|_{\mathbb{W}}\mathrm{d}s\nonumber\\&\leq\left\|\left[\mathscr{R}(\tau_2)-\mathscr{R}(\tau_1)\right]\psi(0)\right\|_{\mathbb{W}}+\frac{K^2M^2\tilde{M}}{\lambda}(\tau_2-\tau_1)+K\int_{\tau_1}^{\tau_2}\!\!\!\gamma(s)\mathrm{d}s+K\int_{\tau_{1}}^{\tau_{2}}\!\!\!\left\|f'_1(s)+f_2(s)\right\|_{\mathbb{W}}\mathrm{d}s\nonumber\\&\quad+\sup_{s\in[0,\tau_1-\epsilon]}\left\|\mathscr{R}(\tau_2-s)-\mathscr{R}(\tau_1-s)\right\|_{\mathcal{L}(\mathbb{W})}\int_{0}^{\tau_{1}-\epsilon}\left\|\left[f'_1(s)+f_2(s)+\mathrm{B}u_{\lambda}(s)+f(s,\tilde{w}_{s})\right]\right\|_{\mathbb{W}}\mathrm{d}s\nonumber\\&\quad+2K\int_{\tau_1-\epsilon}^{\tau_{1}}\left[\left\|f'_1(s)+f_2(s)\right\|_{\mathbb{W}}+\gamma(s)\right]\mathrm{d}s+\frac{2K^2M^2\tilde{M}}{\lambda}\epsilon
		\end{align} 
		Analogous to the estimate \eqref{e2.20}, it can be readily observed that the right-hand side of expressions \eqref{e4.8} converges to zero as $|\tau_2-\tau_1| \rightarrow 0$ and arbitrariness of $\epsilon$.  Thus, the image of $W_r$ under $\mathcal{T}_{\lambda}$ is equicontinuous.
		
		Furthermore, we claim that for each $t\in J$, the set $\mathfrak{X}(t)=\{(\mathcal{T}_\lambda w)(t):w\in W_r\}$ is relatively compact. For $t=0$, the claim is straightforward. Now, consider a fixed  $ 0<t\leq T$ and let $\eta$ be given with $ 0<\eta<t$, we define
		\begin{align*}
			(\mathcal{T}_{\lambda}^{\eta}w)(t)=&\mathscr{R}(\eta)\Big[\mathscr{R}(t-\eta)\psi(0)+\int_{0}^{t-\eta}\mathscr{R}(t-s-\eta)\left[\mathrm{B}u_{\lambda}(s)+f(s,\tilde{w}_s)\right]\mathrm{d}s\nonumber\\&\quad\qquad+\int_{0}^{t-\eta}\mathscr{R}(t-s-\eta)\left[f'_1(s)+f_2(s)\right]\mathrm{d}s\Big]=\mathscr{R}(\eta)y(t-\eta).
		\end{align*}
		Thus, the set  $\mathfrak{X}_{\eta}(t)=\{(\mathcal{T}^{\eta}_\lambda w)(t):w\in W_r\}$ is relatively compact in $\mathbb{W}$ follows by the  compactness of the operator $\mathscr{R}(\eta)$. Consequently, there exist a finite $ z_{i}$'s, for $i=1,\dots, n $ in $ \mathbb{W} $ such that 
		\begin{align*}
			\mathfrak{X}_{\eta}(t) \subset \bigcup_{i=1}^{n}\mathcal{S}_{z_i}(\varepsilon/2),
		\end{align*}
		for some $\varepsilon>0$. We choose $\eta>0$ such that 
		\begin{align*}
			&\left\|(\mathcal{T}_{\lambda}w)(t)-(\mathcal{T}_{\lambda}^{\eta}w)(t)\right\|_{\mathbb{W}}\nonumber\\&\le\left\|\left[\mathscr{R}(t)-\mathscr{R}(\eta)\mathscr{R}(t-\eta)\right]\psi(0)\right\|_{\mathbb{W}}+\int_{t-\eta}^{t}\|\mathscr{R}(t-s)\left[\mathrm{B}u_{\lambda}(s)+f(s,\tilde{w}_s)+f'_1(s)+f_2(s)\right]\|_{\mathbb{W}}\mathrm{d}s\nonumber\\&\quad+\int_{0}^{t-\eta}\|\left[\mathscr{R}(t-s)-\mathscr{R}(\eta)\mathscr{R}(t-s-\eta)\right]\left[\mathrm{B}u_{\lambda}(s)+f(s,\tilde{w}_s)+f'_1(s)+f_2(s)\right]\|_{\mathbb{W}}\mathrm{d}s\nonumber\\&\le\left\|\left[\mathscr{R}(t)-\mathscr{R}(\eta)\mathscr{R}(t-\eta)\right]\psi(0)\right\|_{\mathbb{W}}+\frac{K^2M^2\tilde{M}}{\lambda}\eta+K\int_{t-\eta}^{t}\left[\gamma(s)+f'_1(s)+f_2(s)\right]\mathrm{d}s\\&\quad+\int_{0}^{t-2\eta}\|\left[\mathscr{R}(t-s)-\mathscr{R}(\eta)\mathscr{R}(t-s-\eta)\right]\left[\mathrm{B}u_{\lambda}(s)+f(s,\tilde{w}_s)+f'_1(s)+f_2(s)\right]\|_{\mathbb{W}}\mathrm{d}s\\&\quad+\int_{t-2\eta}^{t-\eta}\|\left[\mathscr{R}(t-s)-\mathscr{R}(\eta)\mathscr{R}(t-s-\eta)\right]\left[\mathrm{B}u_{\lambda}(s)+f(s,\tilde{w}_s)+f'_1(s)+f_2(s)\right]\|_{\mathbb{W}}\mathrm{d}s\nonumber\\&\le\left\|\left[\mathscr{R}(t)-\mathscr{R}(\eta)\mathscr{R}(t-\eta)\right]\psi(0)\right\|_{\mathbb{W}}+\frac{K^2M^2\tilde{M}}{\lambda}\eta+K\int_{t-\eta}^{t}\left[\gamma(s)+f'_1(s)+f_2(s)\right]\mathrm{d}s\\&\quad+\int_{0}^{t-2\eta}\left\|\mathscr{R}(t-s)-\mathscr{R}(\eta)\mathscr{R}(t-s-\eta)\right\|_{\mathcal{L}(\mathbb{W})}\left\|\mathrm{B}u_{\lambda}(s)+f(s,\tilde{w}_s)+f'_1(s)+f_2(s)\right\|_{\mathbb{W}}\mathrm{d}s\\&\quad+(K+K^2)\int_{t-2\eta}^{t-\eta}\left\|\mathrm{B}u_{\lambda}(s)+f(s,\tilde{w}_s)+f'_1(s)+f_2(s)\right\|_{\mathbb{W}}\mathrm{d}s\le \frac{\varepsilon}{2}.
		\end{align*}
		Therefore $$\mathfrak{X}(t)\subset \bigcup_{i=1}^{n}\mathcal{S}_{z_i}(\varepsilon ).$$
		Thus the set $\mathfrak{X}(t)$ is relatively compact in $ \mathbb{W}$ for each $t\in [0,T]$. Hence, by employing  Arzel\'a-Ascoli theorem, we develop the compactness of $ \mathcal{T}_{\lambda}$. 
		
		Furthermore,  utilizing the \emph{Schauder fixed point theorem}, we infer that for each $\lambda>0$, the operator $\mathcal{T}_{\lambda}$  possesses a fixed point in $W_{r}$.
	\end{proof}
	We now proceed to establish the approximate controllability result for the system \eqref{SEq} in the following theorem.
	\begin{theorem}\label{thm4.4}
		Suppose that Assumptions (H0)-(H2) are satisfied. Then the system \eqref{SEq} is approximately controllable.
	\end{theorem}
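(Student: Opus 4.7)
The plan is to show that the fixed points $w^\lambda$ produced by Theorem \ref{thm4.1} satisfy $\|w^\lambda(T) - \zeta_1\|_{\mathbb{W}} \to 0$ as $\lambda \downarrow 0$, which is precisely approximate controllability. The first step is purely algebraic: substituting $u_\lambda$ from \eqref{Cot} into the mild-solution formula at $t = T$, the control integral collapses to $\Psi_0^T \mathscr{J}[\mathrm{R}(\lambda,\Psi_0^T) l(w^\lambda(\cdot))]$; invoking the identity $\Psi_0^T \mathscr{J}[p^\lambda] = l(w^\lambda(\cdot)) - \lambda p^\lambda$ with $p^\lambda := \mathrm{R}(\lambda,\Psi_0^T) l(w^\lambda(\cdot))$ (a consequence of Lemma \ref{lem2.16}), and using the explicit form of $l(w^\lambda)$ from \eqref{e4.2}, the non-control contributions telescope and yield the clean identity
\begin{align*}
w^\lambda(T) - \zeta_1 = -\lambda \mathrm{R}(\lambda,\Psi_0^T)\, l(w^\lambda(\cdot)).
\end{align*}
Thus the theorem reduces to proving $\|\lambda \mathrm{R}(\lambda,\Psi_0^T) l(w^\lambda(\cdot))\|_{\mathbb{W}} \to 0$.

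Next, I would harvest compactness. Assumption \textit{(H2)} says that $\{s \mapsto f(s,\tilde{w}^\lambda_s)\}_{\lambda > 0}$ is dominated in $\mathrm{L}^1(J;\mathbb{W})$ by $\gamma$; Lemma \ref{lem5.3.2} then makes $\{\int_0^T \mathscr{R}(T-s) f(s,\tilde{w}^\lambda_s)\mathrm{d}s\}_{\lambda > 0}$ relatively compact in $\mathbb{W}$. For an arbitrary sequence $\lambda_n \downarrow 0$ I would pass to a subsequence so that this integral converges to some $h^\ast \in \mathbb{W}$, and set
\begin{align*}
g^\ast := \zeta_1 - \mathscr{R}(T)\psi(0) - \int_0^T \mathscr{R}(T-s)[f_1'(s) + f_2(s)]\,\mathrm{d}s - h^\ast,
\end{align*}
which gives $\|l(w^{\lambda_n}(\cdot)) - g^\ast\|_{\mathbb{W}} \to 0$, while $\|l(w^\lambda(\cdot))\|_{\mathbb{W}}$ and $\|g^\ast\|_{\mathbb{W}}$ stay bounded uniformly in $\lambda$.

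The closing step is the triangle split
\begin{align*}
\|\lambda_n \mathrm{R}(\lambda_n,\Psi_0^T) l(w^{\lambda_n})\|_{\mathbb{W}} \le \|\lambda_n[\mathrm{R}(\lambda_n,\Psi_0^T) l(w^{\lambda_n}) - \mathrm{R}(\lambda_n,\Psi_0^T) g^\ast]\|_{\mathbb{W}} + \|\lambda_n \mathrm{R}(\lambda_n,\Psi_0^T) g^\ast\|_{\mathbb{W}}.
\end{align*}
The second summand vanishes by \textit{(H0)}. For the first, I would replay the duality-pairing chain from the proof of Lemma \ref{lem2.17} (using $q$-uniform convexity and $p$-uniform smoothness of $\mathbb{W}$), but applied to the rescaled map $\hat{z}_\lambda(y) := \lambda \mathrm{R}(\lambda,\Psi_0^T) y$: because Lemma \ref{lem2.16} gives $\|\hat{z}_\lambda(y)\|_{\mathbb{W}} \le \|y\|_{\mathbb{W}}$ uniformly in $\lambda$, the geometric constants depend only on $\max\{\|y_1\|, \|y_2\|\}$, the factor of $\lambda$ cancels from the coercivity estimate, and one obtains a Hölder bound $\|\hat{z}_\lambda(y_1) - \hat{z}_\lambda(y_2)\|_{\mathbb{W}} \le C_R \|y_1 - y_2\|_{\mathbb{W}}^\theta$ with $C_R$ independent of $\lambda$. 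Applied with $y_1 = l(w^{\lambda_n})$ and $y_2 = g^\ast$, this forces the first summand to zero, and a standard subsequence--contradiction argument then upgrades subsequential convergence to convergence of the full net $\lambda \downarrow 0$. The main obstacle lies precisely here: the bare Lemma \ref{lem2.17} modulus degenerates as $\lambda \downarrow 0$ because the preimage radius for $\mathrm{R}(\lambda, \Psi_0^T)$ grows like $1/\lambda$, so one must instead track the uniform-in-$\lambda$ modulus of continuity for the scaled map $\lambda \mathrm{R}(\lambda,\Psi_0^T)$, whose range is \emph{a priori} controlled via Lemma \ref{lem2.16}; securing this uniform Hölder estimate is the heart of the proof.
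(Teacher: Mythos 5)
Your proposal is correct, and its skeleton coincides with the paper's: the same algebraic reduction $w^{\lambda}(T)-\zeta_1=-\lambda\mathrm{R}(\lambda,\Psi_{0}^{T})\,l(w^{\lambda}(\cdot))$, then a compactness step giving strong convergence of $l(w^{\lambda_n}(\cdot))$ along a subsequence, then the conclusion $\lambda_n\mathrm{R}(\lambda_n,\Psi_{0}^{T})l(w^{\lambda_n}(\cdot))\to 0$. The differences lie in how the last two steps are executed. For the convergence of $l(w^{\lambda_n})$ the paper extracts a weakly convergent subsequence of $\{f(\cdot,w^{\lambda_n}_{(\cdot)})\}$ in $\mathrm{L}^1$ via Dunford--Pettis and upgrades it through Corollary \ref{cor1}, whereas you invoke Lemma \ref{lem5.3.2} directly to get a strongly convergent subsequence of the convolutions without identifying the limit; both work, yours is slightly more economical, and both tacitly use the compactness of $\mathscr{R}(t)$ coming from (H1) and Lemma \ref{lem2.6}, which you should mention when citing Lemma \ref{lem5.3.2}. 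For the final limit the paper simply quotes Theorem 2.5 of \cite{SIAM2003}, while you reprove the relevant special case: the split through a fixed $g^{\ast}$, Assumption (H0) for the term $\lambda_n\mathrm{R}(\lambda_n,\Psi_{0}^{T})g^{\ast}$, and, for the difference term, a uniform-in-$\lambda$ H\"older modulus for the scaled map $y\mapsto\lambda\mathrm{R}(\lambda,\Psi_{0}^{T})y$ on bounded sets. That estimate does hold exactly as you describe: subtracting the two equations $\lambda z_i+\Psi_{0}^{T}\mathscr{J}[z_i]=\lambda y_i$, pairing with $\mathscr{J}[z_1]-\mathscr{J}[z_2]$, discarding the nonnegative term $\|\mathrm{L}_T^{*}(\mathscr{J}[z_1]-\mathscr{J}[z_2])\|^2$, dividing by $\lambda$, and then using \eqref{e2.9} and \eqref{e2.14} (legitimate because Lemma \ref{lem2.16} confines $z_1,z_2$ to the ball of radius $\max\{\|y_1\|_{\mathbb{W}},\|y_2\|_{\mathbb{W}}\}$) gives $\|z_1-z_2\|_{\mathbb{W}}\le C_R\|y_1-y_2\|_{\mathbb{W}}^{\theta}$ with $C_R$ independent of $\lambda$; your diagnosis that the unscaled modulus of Lemma \ref{lem2.17} degenerates as $\lambda\downarrow 0$, and that the rescaling is what saves the argument, is precisely the content hidden in the paper's citation. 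So your route buys a self-contained proof of the step the paper outsources, at the cost of redoing the duality computation; the paper's route is shorter but leans on the external result for exactly the uniform continuity you identified as the heart of the matter.
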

	\begin{proof}
		Theorem \ref{thm4.1} implies the  existence of a mild solution, say,  $w^{\lambda}\in W_{r(\lambda)}$ for each $\lambda>0$, and any $\zeta_1\in \mathbb{W}$. It is immediate that $w^{\lambda}_0=\psi$ and 
		\begin{align*}
			w^{\lambda}(t)=\mathscr{R}(t)\psi(0)+\int_{0}^{t}\mathscr{R}(t-s)\left[\mathrm{B}u_{\lambda}(s)+f(s,w^\lambda_s)\right]\mathrm{d}s+\int_{0}^{t}\mathscr{R}(t-s)\left[f'_1(s)+f_2(s)\right]\mathrm{d}s,
		\end{align*}
		for $t\in J$, with the control 
		\begin{align}
			u_{\lambda}(t)=\mathrm{B}^{*}\mathscr{R}(T-t)^*\mathscr{J}\left[\mathrm{R}(\lambda,\Psi_{0}^{T})l(w^{\lambda}(\cdot))\right],\ t\in J,
		\end{align}
		where 
		\begin{align*}
			l(w^{\lambda}(\cdot))=\zeta_1-\mathscr{R}(t)\psi(0)-\int_{0}^{T}\mathscr{R}(t-s)f(s,w^{\lambda}_s)\mathrm{d}s-\int_{0}^{T}\mathscr{R}(t-s)\left[f'_1(s)+f_2(s)\right]\mathrm{d}s.
		\end{align*}
		Next, we evaluate
		\begin{align}\label{4.35}
			w^{\lambda}(T)&=\mathscr{R}(T)\psi(0)+\int_{0}^{T}\mathscr{R}(T-s)\left[\mathrm{B}u_{\lambda}(s)+f(s,w^{\lambda}_s)\right]\mathrm{d}s+\int_{0}^{T}\mathscr{R}(T-s)\left[f'_1(s)+f_2(s)\right]\mathrm{d}s\nonumber\\&=\mathscr{R}(T)\psi(0)+\int_{0}^{T}\mathscr{R}(T-s)f(s,w^{\lambda}_s)\mathrm{d}s+\int_{0}^{T}\mathscr{R}(T-s)\left[f'_1(s)+f_2(s)\right]\mathrm{d}s\nonumber\\&\quad+\int_{0}^{T}\mathscr{R}(T-s)\mathrm{B}\mathrm{B}^*\mathscr{R}(T-s)^*\mathscr{J}\left[\mathrm{R}(\lambda,\Psi_{0}^{T})l(w^\lambda(\cdot))\right]\mathrm{d}s\nonumber\\&=\mathscr{R}(T)\psi(0)+\int_{0}^{T}\mathscr{R}(T-s)f(s,w^{\lambda}_s)\mathrm{d}s+\int_{0}^{T}\mathscr{R}(T-s)\left[f'_1(s)+f_2(s)\right]\mathrm{d}s\nonumber\\&\quad+\Psi_{0}^{T}\mathscr{J}\left[\mathrm{R}(\lambda,\Psi_{0}^{T})l(w^\lambda(\cdot))\right]\nonumber\\&=\zeta_1-\lambda\mathrm{R}(\lambda,\Psi_{0}^{T})l(w^\lambda(\cdot)).
		\end{align}	
		Using Assumption  \textit{(H2)}, we get
		\begin{align}
			\int_{0}^{T}\left\|f(s,w^{\lambda_i}_s)\right\|_{\mathbb{W}}\mathrm{d}s&\le \int_{0}^{T}\gamma(s)\mathrm{d} s<+\infty,i\in\mathbb{N}, \nonumber
		\end{align}
		The above fact ensures that the sequence $\{f(\cdot, w^{\lambda_{i}}_{(\cdot)})\}_{i=1}^{\infty}$ is uniformly integrable. Then by the application of Dunford-Pettis theorem, we can find a subsequence of $ \{f(\cdot, w^{\lambda_{i}}_{(\cdot)})\}_{i=1}^{\infty}$ still denoted by $ \{f(\cdot, w^{\lambda_{i}}_{(\cdot)})\}_{i=1}^{\infty}$ such that
		\begin{align*}
			f(\cdot,w^{\lambda_{i}}_{(\cdot)})\xrightharpoonup{w}f(\cdot) \ \mbox{in}\ \mathrm{L}^1(J;\mathbb{W}), \ \mbox{as}\  \lambda_i\to 0^+ \ (i\to\infty).
		\end{align*}
		Next, we see that
		\begin{align}\label{e4}
			\left\|l(w^{\lambda_{i}}(\cdot))-\xi\right\|_{\mathbb{W}}&\le\left\|\int_{0}^{T}\mathscr{R}(T-s)\left[f(s,w^{\lambda_i}_s)-f(s)\right]\mathrm{d}s\right\|_{\mathbb{W}}\to 0
		\end{align}
	as $\lambda_i\to0^+ \ (i\to\infty),$ where 
		\begin{align*}
			\xi &=\zeta_1-\mathscr{R}(T)\psi(0)-\int_{0}^{T}\mathscr{R}(T-s)f(s)\mathrm{d}s-\int_{0}^{T}\mathscr{R}(T-s)\left[f'_1(s)+f_2(s)\right]\mathrm{d}s.
		\end{align*}
		The estimate \eqref{e4} goes to zero using the above weak convergences together with Corollary \ref{cor1}. 	The equality \eqref{4.35} guarantees that
		$z_{\lambda_i}=w^{\lambda_i}(T)-\zeta_1$ for each $\lambda_i>0,\ i\in\mathbb{N}$, is a solution of the equation $$\lambda_i z_{\lambda_i}+\Psi_{0}^{T}\mathscr{J}[z_{\lambda_i}]=\lambda_ih_{\lambda_i},$$ where 
		\begin{align*}
			h_{\lambda_i}=-		l(w^{\lambda_i}(\cdot))&=\mathscr{R}(T)\psi(0)+\int^{T}_{0}\mathscr{R}(T-s)f(s,w^{\lambda_i}_s)\mathrm{d}s+\mathscr{R}(T-s)\left[f'_1(s)+f_2(s)\right]\mathrm{d}s-\zeta_1.
		\end{align*}
		Note that Assumption \textit{(H0)} ensures the the operator $\Psi_0^T$ is positive. By applying Theorem 2.5 from \cite{SIAM2003} along with the estimate \eqref{e4}, we get
		\begin{align*}
			\nonumber\left\|w^{\lambda_i}(T)-\zeta_1\right\|_{\mathbb{W}}=\left\|z_{\lambda_i}\right\|_{\mathbb{W}}\to 0\ \mbox{as}\  \lambda_i \rightarrow 0^{+}\ (i\to\infty).
		\end{align*}
		Hence, the system \eqref{SEq} is approximately controllable.
	\end{proof}
	We now prove the approximate controllability of the system \eqref{SEq} within the framework of a general Banach space $\mathbb{W}$. To demonstrate this, we impose the following assumption on the nonlinear term $f(\cdot,\cdot)$.
	\begin{enumerate}
		\item [\textit{(H3)}] The mapping $f:J\times\mathfrak{B}\to\mathbb{W}$ is continuous and there exists a function $\beta\in\mathrm{L}^{1}(J;\mathbb{R}^+)$ such that
		$$\|f(t,\phi_1)-f(t,\phi_2)\|_{\mathbb{W}}\le\beta(t)\|\phi_1-\phi_2\|_{\mathfrak{B}}, \ t\in J, \phi_1,\phi_2\in\mathfrak{B}.$$
	\end{enumerate}
	\begin{theorem}\label{thm3.6}
		If Assumption \textit{(H3)} holds, then for any $u\in\mathrm{L}^{2}(J;\mathbb{U})$, the control system \eqref{SEq} admit a unique mild solution, provided $$KH_1\|\beta\|_{\mathrm{L}^1(J;\mathbb{R}^+)}<1.$$
	\end{theorem}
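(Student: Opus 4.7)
The plan is to set up a contraction mapping on the complete metric space $Z_\psi := \{w\in C(J;\mathbb{W}) : w(0)=\psi(0)\}$ endowed with the uniform norm inherited from $C(J;\mathbb{W})$, and apply the Banach fixed point theorem. Concretely, for a fixed $u\in\mathrm{L}^2(J;\mathbb{U})$, define $\mathcal{T}:Z_\psi\to Z_\psi$ by
\begin{align*}
(\mathcal{T}w)(t)=\mathscr{R}(t)\psi(0)+\int_{0}^{t}\mathscr{R}(t-s)\left[\mathrm{B}u(s)+f(s,\tilde{w}_s)\right]\mathrm{d}s+\int_{0}^{t}\mathscr{R}(t-s)\left[f'_1(s)+f_2(s)\right]\mathrm{d}s,
\end{align*}
where $\tilde{w}$ is the canonical extension of $w$ by $\psi$ on $(-\infty,0]$. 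A fixed point of $\mathcal{T}$ is, by Definition \ref{Def2.19}, precisely a mild solution of \eqref{SEq}.

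First I would check that $\mathcal{T}$ maps $Z_\psi$ into itself. Clearly $(\mathcal{T}w)(0)=\psi(0)$; continuity of $t\mapsto(\mathcal{T}w)(t)$ follows from the strong continuity of $\mathscr{R}(\cdot)$ (Lemma \ref{lem2.5}), the hypotheses $f_1'+f_2\in\mathrm{L}^1(J;\mathbb{W})$, $\mathrm{B}u\in\mathrm{L}^1(J;\mathbb{W})$ (since $u\in\mathrm{L}^2(J;\mathbb{U})$), and the fact that $s\mapsto f(s,\tilde{w}_s)$ is continuous (hence integrable on $J$) by (H3) together with axiom (A2). These observations are routine and I would not dwell on them.

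The heart of the matter is the contraction estimate. For any $w_1,w_2\in Z_\psi$ and $t\in J$,
\begin{align*}
\|(\mathcal{T}w_1)(t)-(\mathcal{T}w_2)(t)\|_{\mathbb{W}} \le K\int_{0}^{t}\|f(s,\tilde{w}^{1}_s)-f(s,\tilde{w}^{2}_s)\|_{\mathbb{W}}\,\mathrm{d}s \le K\int_{0}^{t}\beta(s)\|\tilde{w}^{1}_s-\tilde{w}^{2}_s\|_{\mathfrak{B}}\,\mathrm{d}s,
\end{align*}
by (H3). Now I invoke axiom (A1)(iii) with $\sigma=0$: since $\tilde{w}^{1}_0=\tilde{w}^{2}_0=\psi$, the $\varUpsilon(\cdot)$ term cancels and
\begin{align*}
\|\tilde{w}^{1}_s-\tilde{w}^{2}_s\|_{\mathfrak{B}} \le \varLambda(s)\sup_{0\le r\le s}\|w_1(r)-w_2(r)\|_{\mathbb{W}} \le H_1\|w_1-w_2\|_{C(J;\mathbb{W})}.
\end{align*}
Substituting back and taking the supremum over $t\in J$ yields
\begin{align*}
\|\mathcal{T}w_1-\mathcal{T}w_2\|_{C(J;\mathbb{W})} \le KH_1\|\beta\|_{\mathrm{L}^1(J;\mathbb{R}^+)}\,\|w_1-w_2\|_{C(J;\mathbb{W})}.
\end{align*}
Under the hypothesis $KH_1\|\beta\|_{\mathrm{L}^1(J;\mathbb{R}^+)}<1$, this makes $\mathcal{T}$ a strict contraction on the Banach space $Z_\psi$, so the Banach fixed point theorem delivers a unique fixed point $w\in Z_\psi$, which is the unique mild solution of \eqref{SEq}.

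The only genuinely delicate point is the correct handling of the phase-space seminorm, in particular recognizing that the histories of $w_1$ and $w_2$ coincide on $(-\infty,0]$ so that the $\varUpsilon(\cdot)\|\tilde{w}^{1}_0-\tilde{w}^{2}_0\|_{\mathfrak{B}}$ contribution vanishes in axiom (A1)(iii), allowing the estimate to close with the constant $KH_1\|\beta\|_{\mathrm{L}^1}$ rather than something larger. Everything else is a standard contraction argument. If one wished to remove the smallness assumption, one could replace the uniform norm by a Bielecki-type weighted norm $\|w\|_\alpha=\sup_{t\in J}e^{-\alpha\int_0^t\beta(s)\mathrm{d}s}\|w(t)\|_{\mathbb{W}}$ and obtain a contraction for $\alpha$ large, but the theorem as stated is content with the direct estimate.
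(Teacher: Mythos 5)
Your proposal is correct and follows essentially the same route as the paper: define $\mathcal{T}$ on $Z_\psi$, use axiom (A1) (with the histories agreeing at $\sigma=0$) together with (H3) to get the Lipschitz bound $KH_1\|\beta\|_{\mathrm{L}^1(J;\mathbb{R}^+)}\|w_1-w_2\|_{C(J;\mathbb{W})}$, and conclude by the Banach fixed point theorem. The extra remarks on self-mapping and the Bielecki-norm alternative are fine but not needed; the argument matches the paper's proof.
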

	\begin{proof}
		We consider a set $Z_{\psi}:=\{w\in C(J;\mathbb{W}) : w(0)=\psi(0)\}$ endowed the norm $\left\|\cdot\right\|_{C(J;\mathbb{W})}$.	We now define an operator $\mathcal{T}:Z_{\psi}\to Z_{\psi}$ as
		$(\mathcal{T}w)(t)=z(t)$, where 
		\begin{align*}
			z(t)=\mathscr{R}(t)\psi(0)+\int_{0}^{t}\mathscr{R}(t-s)\left[\mathrm{B}u(s)+f(s,\tilde{w}_s)\right]\mathrm{d}s+\int_{0}^{t}\mathscr{R}(t-s)\left[f'_1(s)+f_2(s)\right]\mathrm{d}s, t\in J.
		\end{align*}
	Here $\tilde{w}:(-\infty,T]\to\mathbb{W}$ be such that $\tilde{w}(t)=\psi(t), t\in(-\infty,0)$ and $\tilde{w}(t)=w(t), t\in J$.
		It is clear that the system \eqref{SEq} has a mild solution when the operator $\mathcal{T}$ has a fixed point. By the continuity of the function $f(\cdot,\cdot)$, one can easily see that $\mathcal{T}(Z_{\psi})\subset Z_{\psi}$. 	Further, using the axiom (A1), we estimate
		\begin{align*}
			\left\|\tilde{w_{t}}-\tilde{x_{t}}\right\|_{\mathfrak{B}}\le\varLambda(t)\sup\limits_{0\le s\le t}\left\|w(s)-x(s)\right\|_{\mathbb{W}}\nonumber\le H_1\left\|w-x\right\|_{C(J;\mathbb{W})}, 
		\end{align*}
for any $ w,x\in Z_{\psi}, $	where $\sup_{0\le t\le T }|\varLambda(t)|\le H_1$. Using the above inequality together with Assumption \textit{(H3)}, we obtain
		\begin{align*}
			\left\|(\mathcal{T}w)(t)-(\mathcal{T}x)(t)\right\|_{\mathbb{W}}&\leq\left\|\int_{0}^{t}\mathscr{R}(t-s)\left[f(s, \tilde{w_{s}})-f(s,\tilde{x}_{s})\right]\mathrm{d}s\right\|_{\mathbb{W}}\nonumber\\&\le K\int_{0}^{t}\left\|f(s,\tilde{w}_{s})-f(s,\tilde{x}_{s})\right\|_{\mathbb{W}}\mathrm{d}s\nonumber\\&\le K\int_{0}^{t}\beta(s)\left\|\tilde{w_{s}}-\tilde{x_{s}}\right\|_{\mathfrak{B}}\mathrm{d}s\nonumber\\&\le KH_1\|\beta\|_{\mathrm{L}^1(J;\mathbb{R}^+)}\|w-x\|_{C(J;\mathbb{W})}.
		\end{align*}
		The preceding fact implies that $\mathcal{T}$ is a contraction map on $Z_\psi$. Therefore, by invoking the \emph{Banach fixed point theorem} we deduce the existence of a unique fixed point of the operator $\mathcal{T}$ which is a mild solution of the equation \eqref{SEq}.
	\end{proof}
	\begin{theorem}
		If Assumption \textit{(H3)} is satisfied and the linear system \eqref{LEq1} is approximately controllable on $[0,t]$ for any $0<t\le T$, then the system \eqref{SEq} is approximately controllable.
	\end{theorem}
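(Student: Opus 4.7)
The plan is to prove approximate controllability of \eqref{SEq} via an iterative correction scheme that alternates between computing the unique semilinear mild solution (guaranteed by Theorem~\ref{thm3.6}) and applying the approximate controllability of the linear system \eqref{LEq1} to cancel the residual at time $T$. Set $h:=f_1'+f_2\in \mathrm{L}^1(J;\mathbb{W})$, $\mathrm{L}_Tu:=\int_0^T\mathscr{R}(T-s)\mathrm{B}u(s)\,\mathrm{d}s$, and $q:=KH_1\|\beta\|_{\mathrm{L}^1(J;\mathbb{R}^+)}$; the smallness $q<1$ is implicit in the hypothesis of Theorem~\ref{thm3.6} and will supply the contraction factor used below.

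Fix $\zeta_1\in\mathbb{W}$ and $\epsilon>0$. Initialize with $u^0\equiv 0$, $w^0:=w_{u^0}$. Inductively, given $(u^n,w^n)$, use the approximate controllability of \eqref{LEq1} on $[0,T]$ to pick $u^{n+1}\in\mathrm{L}^2(J;\mathbb{U})$ satisfying
\[
\bigl\|\mathrm{L}_Tu^{n+1}-y^n\bigr\|_{\mathbb{W}}<\frac{\epsilon(1-q)}{2^{n+2}},\qquad y^n:=\zeta_1-\mathscr{R}(T)\psi(0)-\int_0^T\mathscr{R}(T-s)\bigl[h(s)+f(s,w^n_s)\bigr]\mathrm{d}s,
\]
and set $w^{n+1}:=w_{u^{n+1}}$. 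Substituting into the mild-solution formula gives
\[
w^{n+1}(T)-\zeta_1=\bigl(\mathrm{L}_Tu^{n+1}-y^n\bigr)+\int_0^T\mathscr{R}(T-s)\bigl[f(s,w^{n+1}_s)-f(s,w^n_s)\bigr]\mathrm{d}s,
\]
so that (H3) together with axiom (A1) yields the residual bound $\|w^{n+1}(T)-\zeta_1\|_{\mathbb{W}}\le \epsilon(1-q)/2^{n+2}+q\|w^{n+1}-w^n\|_{C(J;\mathbb{W})}$.

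The main obstacle is controlling $\|w^{n+1}-w^n\|_{C(J;\mathbb{W})}$: the approximate controllability of the linear system supplies $u^{n+1}$ only existentially, with no quantitative bound on $\|u^{n+1}-u^n\|_{\mathrm{L}^2}$ or on $\sup_{t\in J}\|\int_0^t\mathscr{R}(t-s)\mathrm{B}(u^{n+1}-u^n)(s)\mathrm{d}s\|_{\mathbb{W}}$. My plan to handle this is to recast the construction as a fixed-point problem on $C(J;\mathbb{W})$: define $\Theta(v):=w_{u(v)}$ where $u(v)\in\mathrm{L}^2(J;\mathbb{U})$ is a continuous selector producing, for the $v$-dependent target $y(v):=\zeta_1-\mathscr{R}(T)\psi(0)-\int_0^T\mathscr{R}(T-s)[h(s)+f(s,\tilde v_s)]\mathrm{d}s$, an approximate linear-system control with controlled dependence on $v$. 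The assumption that \eqref{LEq1} is approximately controllable on \emph{every} subinterval $[0,t]$ is used here to arrange such a continuous (ideally Lipschitz) selector $u(\cdot)$, after which the fixed-point map $\Theta$ becomes a contraction with ratio $q<1$, and the Banach contraction principle yields $v^*$ whose control $u(v^*)$ steers $w(T)$ into an $\epsilon$-neighborhood of $\zeta_1$. Constructing this continuous selector from the purely qualitative approximate-controllability hypothesis is the delicate technical point and, in my view, the chief difficulty of the proof; once it is in place, the remainder is routine.
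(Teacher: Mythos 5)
There is a genuine gap, and you have identified it yourself: the entire argument hinges on a continuous (indeed Lipschitz, with a constant compatible with the contraction ratio $q$) selector $v\mapsto u(v)$ of approximate controls for the $v$-dependent target $y(v)$, and no such selector is constructed, nor does the hypothesis supply one. Approximate controllability of \eqref{LEq1} is a purely qualitative statement about the density of $\mathrm{Range}(\mathrm{L}_T)$; it gives no quantitative bound relating $\|u(v)-u(v')\|_{\mathrm{L}^2(J;\mathbb{U})}$ (or the resulting trajectory difference $\|w_{u(v)}-w_{u(v')}\|_{C(J;\mathbb{W})}$) to $\|y(v)-y(v')\|_{\mathbb{W}}$. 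In the general Banach space setting of this theorem one cannot fall back on the resolvent-type operator $\lambda\mathrm{R}(\lambda,\Psi_0^T)$ and the duality map, since that machinery was developed only for reflexive $\mathbb{W}$ with uniformly convex dual; and minimal-norm approximate controls for a dense-range operator need not depend continuously on the target. Moreover, the hypothesis that \eqref{LEq1} is approximately controllable on \emph{every} $[0,t]$ does not help produce such a selector --- it is a statement about subintervals, not about uniformity or continuity in the target --- so the fixed-point map $\Theta$ is not well defined as a contraction and the iteration $\|w^{n+1}(T)-\zeta_1\|_{\mathbb{W}}\le \epsilon(1-q)/2^{n+2}+q\|w^{n+1}-w^n\|_{C(J;\mathbb{W})}$ cannot be closed.

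The paper's proof avoids this difficulty entirely and explains what the ``every $[0,t]$'' hypothesis is actually for. One runs the semilinear system with zero control up to a time $\tau_n$ close to $T$; by Theorem~\ref{thm3.6} this solution $z$ is unique, so for any control vanishing on $[0,\tau_n]$ the semilinear trajectory coincides with $z$ there. One then invokes approximate controllability of the \emph{linear} system on the short interval $[0,T-\tau_n]$, started from $z^n=z(\tau_n)$, to choose $v^n$ steering the linear dynamics close to $\zeta_1$, and defines $u^n=0$ on $[0,\tau_n]$, $u^n(t)=v^n(t-\tau_n)$ on $(\tau_n,T]$. The residual $\|w^n(T)-\zeta_1\|_{\mathbb{W}}$ is then estimated by the linear steering error plus terms like $\int_0^{\tau_n}\|[\mathscr{R}(T-s)-\mathscr{R}(T-\tau_n)\mathscr{R}(\tau_n-s)][f(s,z_s)+f_1'(s)+f_2(s)]\|_{\mathbb{W}}\mathrm{d}s$ and $\int_{\tau_n}^T\|\mathscr{R}(T-s)[f(s,w^n_s)+f_1'(s)+f_2(s)]\|_{\mathbb{W}}\mathrm{d}s$, which tend to zero as $\tau_n\to T$ using Lemma~\ref{lem2.21} (to compensate for the lack of a semigroup property of the resolvent family) and dominated convergence; no modulus of continuity for the choice of $v^n$ is ever needed, because the nonlinearity only acts over the shrinking interval $[\tau_n,T]$. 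If you wish to salvage your scheme you would have to prove a quantitative selection result, which is essentially as hard as (and not implied by) the theorem's hypotheses; the time-splitting argument is the standard and correct route here.
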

	\begin{proof}
		It follows from Theorem \ref{thm3.6} that the system \eqref{SEq} has a unique mild solution on $J$. For a fixed $\psi\in\mathfrak{B}$, let $z(\cdot)=z(\cdot,\psi,0)$ be a mild solution of equation \eqref{SEq} corresponding to the control $u=0$. It is immediate  to write
		\begin{equation*}
			z(t)=\begin{dcases}
				\psi(t), t\in(-\infty, 0),\\
				\mathscr{R}(t)\psi(0)+\int_{0}^{t}\mathscr{R}(t-s)f(s,z_s)\mathrm{d}s+\int_{0}^{t}\mathscr{R}(t-s)\left[f'_1(s)+f_2(s)\right]\mathrm{d}s, t\in J.
			\end{dcases}
		\end{equation*}
		Let us take a sequence $0<\tau_n<T$ such that $\tau_n\to T \ \mbox{as}\ n\to \infty$ and we denote by $z^n=z(\tau_n,\psi,0)$. Next, we consider the following equation:
		\begin{equation}\label{LEq11}
			\left\{
			\begin{aligned}
				\frac{\mathrm{d}}{\mathrm{d}t}\left[w(t)+\int_{0}^{t}\mathrm{G}(t-s)w(s)\mathrm{d}s\right]&=\mathrm{A}w(t)+\int_{0}^{t}\mathrm{N}(t-s)w(s)\mathrm{d}s\\&\qquad+\mathrm{B}u(t),\ t\in(0,T-\tau_n],\\
				w(0)&=z^n.
			\end{aligned}
			\right.
		\end{equation}
		Since the linear system \eqref{LEq11} is approximately controllable on $[0,T-\tau_n]$. Consequently, for any $\zeta_1\in\mathbb{W}$, there is a control function $v^n\in\mathrm{L}^2([0,T-\tau_n];\mathbb{U})$ as follow 
		\begin{align}\label{eq5}
			\lim_{n\to\infty}\left\|\mathscr{R}(T-\tau_n)z^n+\int_{0}^{T-\tau_n}\mathscr{R}(T-\tau_n-s)\mathrm{B}v^n(s)\mathrm{d}s-\zeta_1\right\|_{\mathbb{W}}=0.
		\end{align}
		Let 
		\begin{equation*}
			u^n(t)=\begin{dcases}
				0, \ \ 0\le t\le \tau_n,\\
				v^n(t-\tau_n),\  \tau_n<t\le T.
			\end{dcases}
		\end{equation*}
		For each $u^{n}(\cdot)$, there exists a unique solution $w^n:(-\infty,T)\to \mathbb{W}$ of the integral equation
		\begin{equation}\label{MS}
			w^n(t)=\begin{dcases}
				\psi(t),\  t\in(-\infty, 0),\\
				\mathscr{R}(t)\psi(0)+\int_{0}^{t}\mathscr{R}(t-s)\left[\mathrm{B}u^n(s)+f(s,w^n_s)\right]\mathrm{d}s\\\quad+\int_{0}^{t}\mathscr{R}(t-s)\left[f'_1(s)+f_2(s)\right]\mathrm{d}s,\ t\in J.
			\end{dcases}
		\end{equation}
		Since the solution of above equation is unique, we get $z(s)=w^n(s)$ for all $-\infty< s\le \tau_n$. From equation \eqref{MS}, we have
		\begin{align*}
			w^n(T)&=	\mathscr{R}(T)\psi(0)+\int_{0}^{T}\mathscr{R}(T-s)\left[\mathrm{B}u^n(s)+f(s,w^n_s)\right]\mathrm{d}s+\int_{0}^{T}\mathscr{R}(T-s)\left[f'_1(s)+f_2(s)\right]\mathrm{d}s\nonumber\\&=\mathscr{R}(T)\psi(0)+\int_{0}^{T}\mathscr{R}(T-s)f(s,w^n_s)\mathrm{d}s+\int_{0}^{T}\mathscr{R}(T-s)\left[f'_1(s)+f_2(s)\right]\mathrm{d}s\nonumber\\&\quad+\int_{\tau_n}^{T}\mathscr{R}(T-s)\mathrm{B}v^n(s-\tau_n)\mathrm{d}s.
		\end{align*}
		Finally, we evaluate
		\begin{align*}
			&\left\|w_n(T)-\zeta_1\right\|_{\mathbb{W}}\nonumber\\&=\big\|\mathscr{R}(T)\psi(0)+\int_{0}^{\tau_n}\mathscr{R}(T-s)\left[f(s,w^n_s)+f'_1(s)+f_2(s)\right]\mathrm{d}s\nonumber\\&\quad+\int_{\tau_n}^{T}\mathscr{R}(T-s)\left[f(s,w^n_s)+f'_1(s)+f_2(s)\right]\mathrm{d}s+\int_{\tau_n}^{T}\mathscr{R}(T-s)\mathrm{B}v^n(s-\tau_n)\mathrm{d}s-\zeta_1\big\|_{\mathbb{W}}\nonumber\\&=\big\|\left(\mathscr{R}(T)-\mathscr{R}(T-\tau_n)\mathscr{R}(\tau_n)\right)\psi(0)+\mathscr{R}(T-\tau_n)\mathscr{R}(\tau_n)\psi(0)+\int_{0}^{\tau_n}\mathscr{R}(T-s)f(s,z_s)\mathrm{d}s\nonumber\\&\quad+\int_{0}^{\tau_n}\mathscr{R}(T-s)\left[f'_1(s)+f_2(s)\right]\mathrm{d}s+\int_{\tau_n}^{T}\mathscr{R}(T-s)\mathrm{B}v^n(s-\tau_n)\mathrm{d}s\nonumber\\&\quad+\mathscr{R}(T-\tau_n)\int_{0}^{\tau_n}\mathscr{R}(\tau_n-s)\left[f(s,z_s)+f'_1(s)+f_2(s)\right]\mathrm{d}s\nonumber\\&\quad-\mathscr{R}(T-\tau_n)\int_{0}^{\tau_n}\mathscr{R}(\tau_n-s)\left[f(s,z_s)+f'_1(s)+f_2(s)\right]\mathrm{d}s\nonumber\\&\quad+\int_{\tau_n}^{T}\mathscr{R}(T-s)\left[f(s,w^n_s)+f'_1(s)+f_2(s)\right]\mathrm{d}s-\zeta_1\big\|_{\mathbb{W}}\nonumber\\&\le\left\|\left(\mathscr{R}(T)-\mathscr{R}(T-\tau_n)\mathscr{R}(\tau_n)\right)\psi(0)\right\|_{\mathbb{W}}\!\!+\!\left\|\mathscr{R}(T-\tau_n)z^n+\!\!\int_{0}^{T-\tau_n}\!\!\!\!\!\!\!\!\!\mathscr{R}(T-\tau_n-s)\mathrm{B}v^n(s)\mathrm{d}s-\zeta_1\right\|_{\mathbb{W}}\nonumber\\&\quad+\int_{0}^{\tau_n}\left\|\left[\mathscr{R}(T-s)-\mathscr{R}(T-\tau_n)\mathscr{R}(\tau_n-s)\right]\left[f(s,z_s)+f'_1(s)+f_2(s)\right]\right\|_{\mathbb{W}}\mathrm{d}s\nonumber\\&\quad+\int_{\tau_n}^{T}\left\|\mathscr{R}(T-s)\left[f(s,w^n_s)+f'_1(s)+f_2(s)\right]\right\|_{\mathbb{W}}\mathrm{d}s\to 0 \ \mbox{as}\ n\to\infty,
		\end{align*}
		where we have applied the convergence \eqref{eq5}, Lemma \ref{lem2.21} and the Lebesgue dominated convergence theorem. Therefore, the system \eqref{SEq} is approximately controllable.
	\end{proof}
	\section{Application}\label{sec5}\setcounter{equation}{0}
	In this section, we apply our findings to examine the approximate controllability of a neutral integro-differential equation that arises in the theory of heat conduction of materials with fading memory. The functional settings in the given example consider in the state space $\mathrm{L}^p([0,\pi];\mathbb{R})$ for $p\in[2,\infty)$, and the control space $\mathrm{L}^2([0,\pi];\mathbb{R})$ as discussed in \cite{MTM-20}.
	\begin{Ex}\label{ex1} Consider the following control system:
		\begin{equation}\label{57}
			\left\{
			\begin{aligned}
				&\frac{\partial}{\partial t}\left[w(t,\xi)+\int_{-\infty}^{t}(t-s)^{\alpha}e^{-\kappa(t-s)}w(s,\xi)\mathrm{d}s\right]\\&=\frac{\partial^2w(t,\xi)}{\partial \xi^2}\!+\!\eta(t,\xi)\!+\!\int_{-\infty}^{t}\!\!\!\!\!e^{-\mu(t-s)}w(s,\xi)\mathrm{d}s\!+\!\int_{-\infty}^{t}\!\!\!\!\!h(t-s)w(s,\xi)\mathrm{d}s, \ t\in (0,T], \xi\in[0,\pi], \\
				&w(t,0)=w(t,\pi)=0, \  t\in J=[0,T],\\
				&w(\theta,\xi)=\psi(\theta,\xi), \ \theta\in(-\infty, 0],\xi\in[0,\pi],
			\end{aligned}
			\right.
		\end{equation}
		where $\alpha\in(0,1), \kappa, \mu$ are positive constants and $h:[0,\infty)\to\mathbb{R}$ and $\psi:(-\infty,0]\times[0,\pi]\to\mathbb{R}$ are appropriate functions. The function $\eta: J\times[0,\pi]\to\mathbb{R}$ is square integrable in $t$ and $\xi$. 
	\end{Ex}
	To transform the above system in the abstract form \eqref{SEq}, we take a state space as a reflexive Banach space $\mathbb{W}_{p}=\mathrm{L}^p([0,\pi];\mathbb{R})$ with $p\in[2,\infty)$ and the control space $\mathbb{U}=\mathrm{L}^2([0,\pi];\mathbb{R})$. Choose the phase space $\mathfrak{B}=C_{0}\times\mathrm{L}^1_\omega(\mathbb{W})$. Note that the dual space of $\mathbb{W}_p$ is $\mathbb{W}_p^*=\mathrm{L}^{\frac{p}{p-1}}([0,\pi];\mathbb{R})$ which is uniform convex. We define the operator
	$\mathrm{A}_p:D(\mathrm{A}_p)\subset \mathbb{W}_p\to\mathbb{W}_p$ as
	$$\mathrm{A}_pg=g'',\ \ D(\mathrm{A}_p)=\mathrm{W}^{2,p}([0,\pi];\mathbb{R})\cap\mathrm{W}^{1,p}_0([0,\pi];\mathbb{R}).$$
	The operator $\mathrm{N}_p(t)g=e^{-\mu t}g$ for $g\in\mathbb{W}_p$ and $\mathrm{G}_p(t)g=t^{\alpha}e^{-\kappa t}g$ for $g\in\mathbb{W}_p$. Moreover, the operator $\mathrm{A}_p$ can be written as 
	\begin{align*}
		\mathrm{A}_pg=\sum_{k=1}^{\infty}-k^2\langle g, \nu_k \rangle \nu_k,\ g\in D(\mathrm{A}_p),
	\end{align*}
	where $ \nu_k(\xi)=\sqrt{\frac{2}{\pi}}\sin(k\xi)$ and $\langle g, v_k \rangle:=\int_{0}^{\pi}g(\xi)v_k(\xi)\mathrm{d}\xi$.
	\vskip 0.1 cm
	\noindent\textbf{Step 1:} \emph{Resolvent operator of linear problem.} The operator $\mathrm{A}_p$ with domain $D(\mathrm{A}_p)$ for any $p\in [2,\infty)$ is the infinitesimal generator of a strongly continuous semigroup on $\mathbb{W}_p$ and satisfying the estimate
	\begin{align*}
		\left\|\mathrm{R}(\lambda,\mathrm{A}_p)\right\|_{\mathcal{L}(\mathrm{L}^p)}\le \frac{1}{\lambda}, \ \mbox{for all}\ \lambda\in \textbf{C}\setminus \{-n^2 : n\in\mathbb{N}\},
	\end{align*}
	see, application section of \cite{IMA2022}. Hence, the semigroup is analytic followed by Theorem 5.2, \cite{SPN1983}. Thus, the condition \textbf{\textit{(Cd1)}} holds.
	
	Let us now verify the condition \textbf{\textit{(Cd2)}}. For this, we first compute
	\begin{align*}
		\left\|\mathrm{G}_p(t)g-\mathrm{G}_p(s)g\right\|_\mathbb{W}&=\left\|t^{\alpha}e^{-\kappa t}g-s^{\alpha}e^{-\kappa s}g\right\|_{\mathbb{W}}\nonumber\\&\le\left\|t^{\alpha}e^{-\kappa t}g-t^{\alpha}e^{-\kappa s}g\right\|_{\mathbb{W}}+\left\|t^{\alpha}e^{-\kappa s}g-s^{\alpha}e^{-\kappa s}g\right\|_{\mathbb{W}}\nonumber\\&=t^{\alpha}\left\|e^{-\kappa t}g-e^{-\kappa s}g\right\|_{\mathbb{W}}+|t^{\alpha}-s^{\alpha}|\left\|e^{-\kappa s}g\right\|_{\mathbb{W}}\nonumber\\&\to 0 \ \mbox{as} \ t\to s,
	\end{align*}
	for all $t,s\in [0,\infty)$ and $g\in\mathbb{W}$. Consequently, the operator $\mathrm{G}_p(\cdot)$ is strongly continuous. Next, the Laplace transformation of the operator $\mathrm{G}_p(\cdot)$ is given as
	\begin{align*}
		\widehat{\mathrm{G}_p}(\lambda)=\frac{\Gamma(\alpha+1)}{(\lambda+\kappa)^{\alpha+1}}\mathrm{I},\ \mathrm{Re}(\lambda)>0.
	\end{align*}
	In view of above expression, one can easily say that the family $\{\widehat{\mathrm{G}_p}(\lambda)g:\mathrm{Re}(\lambda)>0 \}$ is absolutely convergent for $g\in\mathbb{W}$. Moreover, we can also extend $\widehat{\mathrm{G}_p}(\lambda)$ for $\lambda\in\Lambda_{\nu} \ \mbox{with}\ \nu\in(\pi/2,\pi)$. Hence, the above facts ensures that the condition \textbf{\textit{(Cd2)}} is satisfied with $N_1=\Gamma(\alpha+1)$ and $N_2=\frac{\Gamma(\alpha+1)}{\kappa^\alpha}$.	The Laplace transformation $\widehat{\mathrm{N}_p}(\lambda)$ of the operator $\mathrm{N}_p(\cdot)$ is given by
	\begin{align*}
		\widehat{\mathrm{N}_p}(\lambda)=\frac{1}{(\lambda+\mu)}\mathrm{I},\ \lambda\in\Lambda_{\frac{\pi}{2}}.
	\end{align*}
	It clear form the above expression, the operator $\widehat{\mathrm{N}_p}(\lambda)$ can be extend for $\lambda\in\Lambda_{\nu} \ \mbox{with}\ \nu\in(\pi/2,\pi)$. Thus, the condition \textbf{\textit{(Cd3)}} is fulfilled with $\varPi(t)=e^{-\mu t}$.
	Finally, in order to verify the condition \textbf{\textit{(Cd4)}}, we choose $Y=C_0^{\infty}([0,\pi];\mathbb{R})$ (space of infinitely differentiable functions with compact support $[0,\pi]$). One can easily identify that the space $Y$ is dense in $\mathrm{W}^{2,p}([0,\pi];\mathbb{R})\cap\mathrm{W}^{1,p}_0([0,\pi];\mathbb{R})$ under the graph norm. Hence, the condition \textbf{\textit{(Cd4)}} is satisfied.
	Therefore, the abstract form of linear system corresponding to \eqref{57} has a resolvent operator $\mathcal{R}_p(\cdot)$ on $\mathbb{W}_p$.
	\vskip 0.1 cm
	\noindent\textbf{Step 2:} \emph{Functional setting and approximate controllability.} 
	Let us take $w(t)(\xi):=w(t,\xi)$ for $t\in J $ and $ \xi\in[0,\pi]$
	and the function $\psi:(-\infty,0]\rightarrow\mathbb{W}_p$  as
$ \psi(t)(\xi)=\psi(t,\xi),\ \xi\in[0,\pi]$
	We now assume the following:
	\begin{itemize}
		\item [(a)] The function $h(\cdot)$ is continuous and $K_f=\sup\limits_{s\in(-\infty,0] }\frac{|h(-s)|}{w(s)}<\infty$.
		\item [(b)] The function $\psi, \mathrm{A}\psi\in\mathfrak{B}$ and the values $\sup\limits_{s\in(-\infty,0)}\frac{e^{\mu s}}{w(s)}$ and  $\sup\limits_{t\in[0,T]}\left[\sup\limits_{s\in(-\infty,0)}\frac{(t-s)^{\alpha}e^{\kappa s}}{w(s)}\right]$ are finite.
		\item [(c)] The expression $\sup\limits_{t\in[0,T]}\left[\sup\limits_{s\in(-\infty,0)}\frac{e^{\kappa s}}{(t-s)^{1-\alpha}w(s)}\right]<\infty$.
	\end{itemize}
	Under the conditions (a), (b) and (c) the functions $f:[0,T]\times\mathfrak{B}\to\mathbb{W}_p$ and $f_1,f_2:[0,T]\to\mathbb{W}_p$ given by
	\begin{align}
		f(t,\phi)(\xi)&=\int_{-\infty}^{0}h(-s)\phi(s,\xi)\mathrm{d}s,
		\qquad f_1(t)\xi=\int_{-\infty}^{0}(t-s)^{\alpha}e^{-\kappa(t-s)}\psi(s,\xi)\mathrm{d}s,
		\nonumber\\f_2(t)\xi&=\int_{-\infty}^{0}e^{-\mu(t-s)}\mathrm{A}\psi(s,\xi)\mathrm{d}s,\nonumber
	\end{align}
	are well defined. Moreover, from condition (a), one can easily see that $f(\cdot)$ is continuous and $\left\|f\right\|_{\mathcal{L}(\mathfrak{B};\mathbb{W}_p)}\le K_f$. Thus, the Assumption \eqref{ass2.4} \textit{(H2)} are satisfied. Furthermore, the condition (b) and (c) guarantees that the functions $f_1\in C^{1}([0,T];\mathbb{W}_p)$ and $ f_2\in C([0,T];\mathbb{W}_p)$. 
	
	Next, the resolvent operator $\mathrm{R}(\lambda,\mathrm{A}_p)$ is compact for some $\lambda\in\rho(\mathrm{A}_p)$ (see, application section of \cite{IMA2022}). Consequently, Assumption  \eqref{ass2.4} \textit{(H1)} is followed.
		The operator $\mathrm{B}:\mathbb{U}\to\mathbb{W}_p$ is defined as  $$\mathrm{B}u(t)(\xi):=\eta(t,\xi)=\int_{0}^{\pi}H(\zeta,\xi)u(t)(\zeta)\mathrm{d}\zeta, \ t\in J,\ \xi\in [0,\pi],$$ with kernel $H\in\mathrm{C}([0,\pi]\times[0,\pi];\mathbb{R})$ and $H(\zeta,\xi)=H(\xi,\zeta),$ for all $\zeta,\xi\in [0,\pi]$.  Thus, the operator $\mathrm{B}$ is bounded (see, application section of \cite{IMA2022}). We can choose the kernel $H(\cdot,\cdot)$ such that the operator $\mathrm{B}$ is injective. In particular, $H(\xi,\zeta)=\min\{\xi,\zeta\}, \xi,\zeta\in [0,\pi]$.
	
	Using the above expressions, we can transform the system \eqref{57} into an abstract form as presented in \eqref{SEq} which satisfy all the assumptions. It is remaining to verify that the corresponding linear system of \eqref{SEq} is approximately controllable. To accomplish this, we take $\mathrm{B}^*\mathcal{R}_p(T-t)^*w^{*}=0,$ for any $w^{*}\in\mathbb{W^{*}_p}$. Then we have
	\begin{align*}
		\mathrm{B}^*\mathcal{R}_p(T-t)^*w^{*}=0\Rightarrow \mathcal{R}_p(T-t)^*w^{*}=0\Rightarrow w^{*}=0,
	\end{align*}
	and hence the linear system corresponding to \eqref{SEq} is approximately controllable is followed by Remark \ref{rem3.4}. Finally, by invoking Theorem \ref{thm4.4}, we conclude that the semilinear system \eqref{SEq} (equivalent to system \eqref{57}) is approximately controllable.
	\section{Concluding Remarks} In this study, we initiated our discussion to prove some important properties associated with the resolvent family $\mathcal{R}(\cdot)$ as defined in \eqref{RSL}.  Subsequently, we discussed the approximate controllability problem for the linear system \eqref{LEq1}. This investigation involve the optimization problem \eqref{opt} and finding the expression of the optimal control (see, lemma \ref{lem3.2}). Furthermore, we developed the existence of a mild solution of the neutral intego-differential equation \eqref{SEq} employing Schauder's fixed point theorem. We also formulated sufficient conditions for the approximate controllability of \eqref{SEq} within a reflexive Banach space having uniform convex dual. Additionally, we demonstrated the approximate controllability of the system \eqref{SEq} in a general Banach space, assuming a Lipschitz type condition on the nonlinear term. Finally, we applied our findings to determine the approximate controllability of the neutral integro-differential equation relevant to the theory of heat conduction of material with fading memory. In future aspects, we aim to explore this study in the framework of fractional order integro-differential equations and inclusions.
	
	\medskip\noindent	{\bf Acknowledgments:} The authors would like to thank the referees for their valuable suggestions and comments on the first version of this  manuscript which helped us to improve the presentation of the article. The authors would like to thank the Department of Science $\&$ Technology, Government of India for the support under FIST (No:SR/FST/MS-II/2021/97) to the Department of Mathematics, Indian Institute of Science (IISC), Bangalore, India. The first  author would also like to thank Indian Institute of Science for providing him the Institute of Eminence (IoE) fellowship.


\begin{thebibliography}{10}
		
		\bibitem{SP2006} Y. Alber and I. Ryazanteva, \emph{Nonlinear ill-posed problems of monotone type}, Springer, 2006.
		
		\bibitem{MCRF2021} S. Arora, M.T. Mohan and J. Dabas,  Approximate controllability of a Sobolev type impulsive functional evolution system in Banach spaces, \emph{Math. Control Relat. Fields}, 11(4): 857--883, 2021.
		
		\bibitem{IMA2022} S. Arora, M.T. Mohan and J. Dabas, Approximate controllability of fractional order non-instantaneous impulsive functional evolution equations with state-dependent delay in Banach spaces, \emph{IMA J. Math. Control Inform.}, 39:1103--1142, 2022.
		
		\bibitem{NAHS2021} S. Arora, M.T. Mohan and J. Dabas, Approximate controllability of the non-autonomous impulsive evolution equation with state-dependent delay in Banach spaces, \emph{Nonlinear Anal. Hybrid Syst.}, 39:100989, 2021.
		
		\bibitem{JDE2022} S. Arora, M.T. Mohan and J. Dabas, Existence and approximate controllability of non-autonomous functional impulsive evolution inclusions in Banach spaces, \emph{J. Differential Equations}, 307:83–113, 2022.
		
		\bibitem{IJM1967} E. Asplund, Averaged norms, \emph{Israel J. Math.}, 5(4):227--233, 1967.
		
		\bibitem{AP1992} V. Barbu, \emph{Analysis and Control of Nonlinear Infinite Dimensional Systems}, volume 190, Academic Press, 1992.
		
		\bibitem{NDEA2003} P. Cannarsa, and D. Sforza, Global solutions of abstract semilinear parabolic equations with memory terms, \emph{Nonlinear Differ. Equ. Appl.}, 10:399--430, 2003.
		
		\bibitem{JFPTA2023} N. Cao and X. Fu, Existence and asymptotic properties of solutions of an integro-differential evolution equation with nonlocal conditions on infinite interval, \emph{J. Fixed Point Theory Appl.}, 25:14, 2023.
		
		\bibitem{JIEA2022}N. Cao and X. Fu, Controllability of semilinear neutral integro-differential evolution systems with fractional Brownian motion, \emph{J. Integral Equations Applications} 34:409--432, 2022.
		
		\bibitem{JDEKDV2020} E. Cerpa, C. Montoya, B. Zhang, Local exact controllability to the trajectories of the Korteweg-de Vries-Burgers equation on a bounded domain with mixed boundary conditions, \emph{J. Differential Equations}, 268:4945--4972, 2020.
		
		\bibitem{SJMA1981} Ph. Cl\'ement and J.A. Nohel, Asymptotic behavior of solutions of nonlinear Volterra equations with completely positive kernels, \emph{SIAM J. Math. Anal.}, 12:514--535, 1981.
		
		
		\bibitem{MZ1992} Ph. Cl\'ement and J. Pr\"uss, Global existence for a semilinear parabolic Volterra equation, \emph{Math. Z.}, 209:17--26, 1992.
		
		\bibitem{ZAMP1967} B.D. Coleman and M.E.Gurtin, Equipresence and constitutive equations for rigid heat conductors, \emph{Z. Angew. Math. Phys.}, 18:199--208, 1967.
		
		\bibitem{JDE1988} W. Desch, R. Grimmer, W. Schappacher, Wellposedness and wave propagation for a class of integro-differential equations in Banach space, \emph{J. Differ. Equ.}, 74: 391–411, 1988.
		
		\bibitem{JIEA2011} J.P.C. Dos Santos, H. Henq\'iquez and E. Hern\'andez, Existence results for neutral integro-differential equations with unbounded delay, {\em J. Integral Equations Appl.}, 23(2): 289--328, 2011.
		
		\bibitem{CP1983} I. Ekeland and T. Turnbull, \emph{Infinite Dimensional Optimization and Convexity}, Chicago press, London, 1983.
		
		\bibitem{NA2010}K. Ezzinbi and S.Ghnimi, Existence and regularity of solutions for neutral partial functional integro-differential equations \emph{Nonlinear Anal. Real World Appl.}, 11:2335--2344, 2010.
		
		\bibitem{SPV2001} M. Fabian et.al.,	\emph{Functional Analysis and Infinite Dimensional Geometry}, CMS Books in Mathematics, Springer-Verlag, New York, 2001.
		
		\bibitem{EECT2017} X. Fu, Approximate controllability of semilinear non-autonomous evolution systems with state-dependent delay, \emph{Evol. Equ. Control Theory}, 6:517–534, 2017.
		
		\bibitem{SIJMA1984} R.C. Grimmer and F. Kappel, Series expansions of Volterra integro-differential equations in Banach space, \emph{SIAM J. Math. Anal.}, 15:595--604, 1984.
		
		\bibitem{TAMS1982} R.C. Grimmer, Resolvent operator for integral equations in a Banach space, \emph{Trans. Am. Math. Soc.}, 273:333--349, 1982.
		
		\bibitem{ARMA1968} M.E. Gurtin and A.C. Pipkin, A general theory of heat conduction with finite wave speed, \emph{Arch. Ration. Mech. Anal.}, 31:113--126, 1968.
		
		
		
		\bibitem{AAM2017} E. Hern\'andez and D. O'Regan, D, On a new class of abstract neutral integro-differential equations and applications, \emph{Acta Appl. Math.} 149:125--137, 2017.
		
		\bibitem{JIEA2013}H.R. Henr\'iquez and J.P.C Dos Santos, Differentiability of solutions of abstract neutral integro-differential equations, \emph{J. Int. Equ. Appl.}, 25:47--77, 2013.
		
		\bibitem{EECT2022} H. Huang and X. Fu, Optimal control problems for a neutral integro-differential system with infinite delay, \emph{Evol. Equ. Control Theory}, 11:177--197, 2022.
		
		\bibitem{SPV1991} Y. Hino, S. Murakami, and T. Naito, \emph{Functional differential equations with infinite delay}, Springer-Verlag, 1991.
		
		\bibitem{JMAA2018} W. Jedidi, T. Simon and M. Wang,  Density solutions to a class of integro-differential equations, \emph{J. Math. Anal. Appl.}, 458:134–-152, 2018.
		
		\bibitem{MMAS2021}K. Jeet and D.N. Pandey, Approximate controllability of nonlocal impulsive neutral integro-differential equations with finite delay, \emph{Math. Methods Appl. Sci.}, 4:14937–14956, 2021.
		
		
		\bibitem{BI1995} X. Li and J. Yong, \emph{Optimal control theory for infinite dimensional systems}, Birkhuser, 1995.
		
		\bibitem{ZL2015} Z. Liu, X. Li and  D. Motreanu, Approximate controllability for nonlinear evolution Hemivariational inequalities in Hilbert spaces, \emph{SIAM J. Control Optim.}, 53:3228-3244, 2015.
		
		\bibitem{SIAM1990} A. Lunardi, On the linear heat equation with fading memory, \emph{SIAM J. Math. Anal.}, 21:1213--1224, 1990.
		
		\bibitem{SIAM2003} N.I. Mahmudov, Approximate controllability of semilinear deterministic and stochastic evolution equations in abstract spaces, \emph{SIAM J. Control Optim.}, 42(5):1604--1622, 2003.
		
		\bibitem{EECT2015} U.D. Maio, A.K. Nandakumaran, C. Perugia, Exact internal controllability for the wave equation in a domain with oscillating boundary with Neumann boundary condition, \emph{Evol. Equ. Control Theory}, 4:325--346, 2015.
		
		\bibitem{JMAA1978} R.K. Miller, An integro-differential equation for rigid heat conduction with memory. \emph{J. Math. Anal. Appl.}, 66:313--332, 1978.
		
		\bibitem{AMC2014} F. Mokkedem and X. Fu, Approximate controllability of semi-linear neutral integro-differential systems with finite delay, \emph{Appl. Math. Comput.}, 242:202--215, 2014.
		
		\bibitem{QAM1971} J.W. Nunziato, On heat conduction in materials with memory, \emph{Quart. Appl. Math.}, 29:187--204, 1971.
		
		\bibitem{PTRS2010} J.V. Neerven and M. Veraar, Maximal inequalities for stochastic convolutions in $2$-smooth Banach spaces and applications to stochastic evolution equation, \emph{Phil. Trans. R. Soc. A}, 378:20190622, 2010.
		
		\bibitem{SPN1983} A. Pazy, \emph{Semigroup of linear operators and applications to partial differential equations}, Springer-Verlag, New York, 1983.
		
		\bibitem{AMO2022} C. Perugia, M. Sara and A.K. Nandakumaran, Exact internal controllability for a problem with imperfect interface, \emph{Appl. Math. Optim.}, 85(3):40, 2022.
		
		\bibitem{JDE2020} M.F. Pinaud, H.R. Henr\'iquez, Controllability of systems with a general nonlocal condition, \emph{J. Differential Equations}, 269:4609--4642, 2020.
		
		\bibitem{IJM1975} G. Pisier, Martingales with value in uniorm convex spaces, \emph{Israel J. Math.}, 20(3-4):326--350, 1975.
		
		\bibitem{BVB1993} J. Pr\"uss, \emph{Evolutionary Integral Equations and Applications}, Birkh\"auser Verlag, Basel, 1993.
		
		\bibitem{MTM-20}	\text{K. Ravikumar}, \text{M.T. Mohan} and \text{A. Anguraj}, Approximate controllability of a non-autonomous evolution equation in Banach spaces,  {\em Numer. Algebra Control Optim.}, 11(3): 461--485, 2021. 
		
		\bibitem{TRR} R. Triggiani, Addendum: A note on the lack of exact controllability for mild solutions in Banach spaces, \emph{SIAM J. Control Optim.}, 18:98, 1980.
		
		\bibitem{TR} R. Triggiani, A note on the lack of exact controllability for mild solutions in Banach spaces, \emph{SIAM J. Control Optim.}, 15:407-411, 1977.
		
		\bibitem{IJC2018} V. Vijayakumar, Approximate controllability results for analytic resolvent integro-differential inclusions in Hilbert spaces, \emph{Int. J. Control}, 91(1):204–214, 2018.
		
		\bibitem{S1996} J. Wu, \emph{Theory and Applications of Partial Functional Differential Equations}, Springer, New York, 1996.
		
		
		\bibitem{JMAA1991} Z.B. Xu and G.F. Roach, Characteristic inequalities of uniformly convex and uniformly smooth Banach spaces, \emph{J. Math. Anal.Appl.}, 157:189--210, 1991.
		
		\bibitem{Sp1978} K. Yosida, \emph{Functional Analysis}, Springer, 1978.
		
		\bibitem{EZ} E. Zuazua, \emph{Controllability and observability of partial differential equations: some results and open problems}, in Handbook of differential equations: evolutionary equations, 3:527--621, 2007.
		
		\bibitem{JFPTA2020} J. Zhu and X. Fu, Existence and regularity of solutions for neutral partial integro-differential equations with nonlocal conditions, \emph{J. Fixed Point Theory Appl.}, 22(2):34, 2020.
		
		
	\end{thebibliography}
\end{document}